\numberwithin{equation}{section}
\newcommand{\bx}{\mathbf{x}}
\newcommand{\by}{\mathbf{y}}
\newcommand{\bz}{\mathbf{z}}
\newcommand{\bu}{\mathbf{u}}
\newcommand{\bv}{\mathbf{v}}
\newcommand{\bb}{\mathbf{b}}
\newcommand{\bt}{\mathbf{t}}
\newcommand\blfootnote[1]{%
  \begingroup
  \renewcommand{\@makefntext}[1]{\noindent\makebox[1.8em][r]#1}
  \renewcommand\thefootnote{}\footnote{#1}%
  \addtocounter{footnote}{-1}%
  \endgroup
}
\newcommand{\customfootnotetext}[2]{{%
  \renewcommand{\thefootnote}{#1}%
  \footnotetext[0]{#2}}}%
\DeclareMathOperator{\proj}{proj}
\DeclareMathOperator{\prox}{prox}
\DeclareMathOperator*{\argmin}{arg\,min}
\DeclareMathOperator{\Pen}{Pen}
\theoremstyle{plain}
\newtheorem{theorem}{Theorem}[section]
\newtheorem{proposition}[theorem]{Proposition}
\newtheorem{lemma}[theorem]{Lemma}
\newtheorem{corollary}[theorem]{Corollary}
\theoremstyle{definition}
\newtheorem{assumption}[theorem]{Assumption}
\theoremstyle{remark}
\newtheorem{remark}[theorem]{Remark}
\theoremstyle{fact}
\newtheorem{fact}[theorem]{Fact}
\title{Stochastic Smoothed Primal-Dual Algorithms for \\
Nonconvex Optimization with Linear Inequality Constraints}
\author{Ruichuan Huang\footnote{Department of Mathematics, University of British Columbia. \url{hrc22@student.ubc.ca}} \and Jiawei Zhang\footnote{Laboratory of Information and Decision Systems, Massachusetts Institute of Technology. \url{jwzhang@mit.edu}} $^\P$\and Ahmet Alacaoglu\footnote{Department of Mathematics, University of British Columbia. \url{alacaoglu@math.ubc.ca}} $^\P$}
\date{}
\begin{document}
\maketitle

\begin{abstract}
We propose smoothed primal-dual algorithms for solving stochastic and smooth nonconvex optimization problems with linear inequality constraints. Our algorithms are single-loop and only require a single stochastic gradient based on one sample at each iteration. A distinguishing feature of our algorithm is that it is based on an inexact gradient descent framework for the Moreau envelope, where the gradient of the Moreau envelope is estimated using one step of a stochastic primal-dual augmented Lagrangian method. To handle inequality constraints and stochasticity, we combine the recently established global error bounds in constrained optimization with a Moreau envelope-based analysis of stochastic proximal algorithms. For obtaining $\varepsilon$-stationary points, we establish the optimal $O(\varepsilon^{-4})$ sample complexity guarantee for our algorithms and provide extensions to stochastic linear constraints. We also show how to improve this complexity to $O(\varepsilon^{-3})$ by using variance reduction and the expected smoothness assumption. Unlike existing methods, the iterations of our algorithms are free of subproblems, large batch sizes or increasing penalty parameters and use dual variable updates to ensure feasibility.
\end{abstract}
\section{Introduction}
\customfootnotetext{$\P$}{Co-last authors and corresponding authors.}
\label{intro}
We focus on the problem template
\begin{equation}\label{eq: prob1}
\min_{\bx\in X} f(\bx) \text{~subject to~} A\bx = \bb,
\end{equation}
where $f\colon\mathbb{R}^n \to \mathbb{R}$ is $L_f$-smooth, the set $X\subseteq \mathbb{R}^n$ is polyhedral, and easy to project. In particular, let $X$ be given as $X=\{ \bx\colon H\bx \leq h \}$ for some matrix $H$ and vector $h$. Taking $H=I$, for example, gives this template the ability to model \emph{linear inequality} constraints. 
In particular, when we have the problem
\begin{equation}\label{eq: prob2}
\min_{\bx} f(\bx) \text{~subject to~} A\bx \leq \bb,
\end{equation}
we introduce a slack variable $\bt = A\bx-\bb$ so that $A\bx-\bt=\bb$ and our optimization variable becomes $\binom{\bx}{\bt}$ and we can equivalently write the problem in the template \eqref{eq: prob1} by using the constraint $\bt \leq 0$. As such, we focus on \eqref{eq: prob1} and our results directly apply to solving \eqref{eq: prob2} by using this standard slack variable reformulation.

Throughout, we assume that we have access to an unbiased oracle $F(\bx)$ such that 
\begin{equation}\label{eq: stoc_oracle}
\mathbb{E}[F(\bx)] = \nabla f(\bx), \text{~and~} \mathbb{E}\|F(\bx) - \nabla f(\bx) \|^2 \leq \sigma^2.
\end{equation}
A common setting is when $f(\bx) = \mathbb{E}_{\xi \sim \Xi} [f(\bx, \xi)]$ where $\Xi$ is an unknown distribution that we can draw i.i.d. samples from. In this case, it is common to set $F(\bx) = \nabla f(\bx, \xi)$ and assume that $\mathbb{E}[\nabla f(\bx, \xi)] = \nabla f(\bx)$.

Inclusion of $X$ in the template \eqref{eq: prob1} increases the modeling power significantly, while causing difficulties in the analysis. Many problems fit this template, including constrained and distributed optimization, nonnegative matrix factorization, sparse subspace estimation and collaborative learning, see for example \cite{zhang2022decentralized, hong2016decomposing} and also \Cref{sec: apps}. Moreover, reformulations of nonconvex minimization problems are also common by using linear inequality constraints \citep{zhang2022decentralized}.

Algorithm development for \eqref{eq: prob1} and related templates have been active in the last couple of years \citep{alacaoglu2024complexity, zhang2020proximal, zhang2020single, lu2024variance, li2021rate, lin2022complexity, yan2022adaptive, li2024stochastic, boob2023stochastic, hong2016decomposing}, mainly due to the applications of functionally constrained nonconvex optimization problems in the context of neural network training \citep{katz2022training, dener2020training}. Stochastic augmented Lagrangian methods (ALM) have found widespread use in practice with problems involving nonconvex functional constraints \citep{katz2022training, dener2020training}, whereas their behavior for even linearly constrained nonconvex optimization of the form \eqref{eq: prob1} remain poorly understood. The focus of this work is to improve our understanding of stochastic ALM in the context of nonconvex optimization, by focusing on the fundamental template \eqref{eq: prob1}.

Compared to the setting of convex $f$, where the global complexity analysis is mostly settled for ALM and its stochastic version \citep{yan2022adaptive}, nonconvexity of $f$ poses significant difficulties in the analysis of ALM. Many works in the literature focus on penalty based algorithms (which will be formally introduced later in this section) that do not perform dual updates (or perform negligible dual updates that we clarify later) \citep{lu2024variance, li2021rate, lin2022complexity}, rather than primal-dual algorithms such as ALM. However, in practice, dual updates are known to be essential for accelerating convergence. Penalty methods are known to be unstable since increasing penalty parameter causes Lipschitz constant of the subproblems to increase and can lead to numerical issues. These differences in behavior between penalty and augmented Lagrangian methods are well-known, see for example classical books, such as \citep{bertsekas2014constrained, bertsekas2016nonlinear, nocedal1999numerical}.

For problem \eqref{eq: prob1} with access to full gradients of $f$ and the full matrix $A$, the optimal complexity with primal-dual methods are obtained in the work of \cite{zhang2022error}. When one has access to stochastic gradients of $f$ and the matrix $A$, a recent work by \cite{alacaoglu2024complexity} showed optimal complexity guarantees under expected smoothness (see Assumption \ref{assumption: variance_reduction}), for the special case of \eqref{eq: prob1} when $X=\mathbb{R}^n$, where this latter restriction significantly reduces the generality of the template. For example, modeling the standard quadratic programming problem requires $X$ to be a half-space, which was not supported in the analysis of \cite{alacaoglu2024complexity}. Our goal is to go beyond these results by handling both the case when $X\neq \mathbb{R}^n$ as well as the case when we do not have access to the matrix $A$ but only to an unbiased estimate of $A$, by keeping optimal complexity guarantees. A more detailed comparison of complexity guarantees will be made in Section \ref{sec: relwork} and a summary is provided in Table \ref{tab:1}.

\paragraph{Lagrangian, penalty and augmented Lagrangian functions}
The standard approach to tackle \eqref{eq: prob1} is to design algorithms operating on the Lagrangian, augmented Lagrangian or penalty functions. In particular, the Lagrangian function is given as
\begin{equation*}
L(\bx, \by) = f(\bx) + \langle A\bx-\bb, \by \rangle,
\end{equation*}
with the dual variable $\by$, whereas the penalty function has the form of
\begin{equation*}
\Pen_\rho(\bx) = f(\bx) + \frac{\rho}{2}\| A\bx-\bb\|^2.
\end{equation*}
It is common for algorithms based on the penalty function to require $\rho \to \infty$ for convergence \citep{bertsekas2014constrained}. One major disadvantage of this strategy is that $\rho$ getting larger makes the subproblem of minimizing the penalty function more and more ill-conditioned.

An influential idea was the introduction of the augmented Lagrangian (AL) function which combined the idea of the Lagrangian and penalty formulations \citep{hestenes1969multiplier}. In particular, the AL function is defined as
\begin{equation*}
L_\rho(\bx, \by) = f(\bx) + \langle A\bx-\bb, \by\rangle + \frac{\rho}{2} \| A\bx-\bb\|^2.
\end{equation*}
Augmented Lagrangian methods in the classical literature were favored because these methods worked without requiring $\rho$ to grow arbitrarily large. In fact, many instances of ALM converge to the optimal solution with fixed $\rho$ since the inclusion of the dual variable $y$ aids in satifying feasibility \citep{bertsekas2014constrained}.

\paragraph{Primal vs primal-dual algorithms. } The algorithms based on the penalty function are generally referred to as \emph{penalty algorithms} and are easier to analyze in different settings since they are primal-only algorithms, meaning that they only perform updates on primal variable $\bx$ where approximate feasibility is ensured by $\rho\to\infty$. In particular, a classical penalty method iterates for $k=1, 2, \dots$ as
\begin{align*}
&\bx_{k+1} \approx \arg\min_{\bx \in X} f(\bx) + \frac{\rho_k}{2} \| A\bx -\bb\|^2, \\
&\text{Select~} \rho_{k+1} > \rho_k.
\end{align*} 
The algorithms based on the augmented Lagrangian are generally more difficult to analyze due to the additional dynamics coming from the dual updates where the dual updates are critical to ensure that the approximate feasibility is attained with constant $\rho$. An ALM iteration proceeds for $k=1, 2, \dots$ by updating
\begin{align*}
\bx_{k+1} &\approx \arg\min_{\bx \in X} f(\bx) + \langle \by_k, A\bx - \bb \rangle + \frac{\rho}{2} \| A\bx -\bb\|^2, \\
\by_{k+1} &= \by_k + \sigma (A\bx_{k+1} - \bb).
\end{align*} 
For both penalty methods and ALM, different strategies exist to generate $\bx_{k+1}$ that approximately minimize the penalty or augmented Lagrangian functions by either iterating multiple steps of gradient descent (GD), known as \emph{inexact} algorithms, or applying one step of GD, known as \emph{linearized} algorithms\citep{ouyang2015accelerated}.

In view of the earlier discussion, when $f$ is nonconvex, most of the literature focuses on either analyzing penalty methods, or analyzing ALM with \emph{negligible} dual updates and increasing penalty parameters $\rho$, due to the inherent difficulty in analyzing the dual variable and its effect in convergence. In particular, as also highlighted in \cite{alacaoglu2024complexity}, many of the recent analysis of ALM is of the form of a \emph{perturbed penalty analysis}, meaning that the feasibility is driven by increasing penalty parameters, and the dual updates are designed so that they do not deteriorate the estimates too much. Because of this, the dual step sizes are selected to be small to ensure boundedness of the dual variable (or controlling the growth of the dual variable). We refer to such updates as \emph{negligible} dual updates since the analyses do not harness the benefit of such updates in ensuring feasibility. Feasibility is driven by large penalty parameters. Some representative examples are \cite{lu2024variance}, \cite{li2021rate}, \cite{lin2022complexity}, \cite{li2024stochastic}.

This is the case even in the deterministic setting and the only method that we are aware that can handle true ALM with fixed penalty parameters and non-negligible dual updates are due to \cite{zhang2022error} that uses a linearized \emph{proximal} AL function with a dynamic adjustment on the proximal center, which will be clarified in \Cref{sec: alg} since it will form the basis of our algorithmic development.

\subsection{Contributions}
In this paper, we propose a stochastic smoothed linearized augmented Lagrangian algorithm for solving \eqref{eq: prob1} that only uses a single sample of stochastic gradient at every iteration. This algorithm also works with a constant penalty parameter and incorporates non-negligible dual updates for feasibility where the dual step sizes have the same order as the primal step sizes. We show that this method has its iteration complexity and sample complexity guarantees in the order of ${O}(\varepsilon^{-4})$. Such a sample complexity result is optimal even in the unconstrained nonconvex case under our assumptions (see Assumption \ref{asmp: 1}) \citep{arjevani2023lower}. In contrast, the prior results with optimal complexity required large penalty parameters, no dual updates and further assumptions \citep{lu2024variance}. 
We then prove that this complexity can be improved to ${O}(\varepsilon^{-3})$ with variance reduction when an additional expected smoothness assumption is made (see Assumption \ref{assumption: variance_reduction}). Under this stronger assumption, this is the optimal complexity even without constraints \citep{arjevani2023lower}.

We consider extensions of this framework when we have linear constraints that hold in expectation, that is, when the constraints are given as $\mathbb{E}_\xi[A_\xi\bx - \bb_\xi] = 0$.
Our algorithm can also handle this stochastic constrained case with the same complexity guarantees.
To our knowledge, this is the first algorithm achieving the optimal ${O}(\varepsilon^{-4})$ benchmark sample complexity for nonconvex optimization with stochastic constraints using one sample per iteration, going beyond the best-known $O(\varepsilon^{-5})$ complexity that is achieved for a more general problem that does not capture the structure of linear constraints \citep{li2024stochastic,alacaoglu2024complexity}.

A more detailed comparison with the related works is given in \Cref{sec: relwork}. A summary is given in Table \ref{tab:1}.

\begin{table}[h]
    \centering
    \begin{tabular}{|l|l|c|c|c|l|}
    \hline
    \textbf{Reference} & \textbf{Constraint} & \textbf{Oracle} & \textbf{Complexity} & \textbf{Loops} & \textbf{Method} \\
    \hline
    {\cite{alacaoglu2024complexity}} & $A \bx = \bb$ & \makecell[l]{Eq. \eqref{eq: stoc_oracle} and \\ Asmp. \ref{assumption: variance_reduction}} & $\widetilde{O}(\varepsilon^{-3})$ & 1 & ALM \\
    \hline
    {\cite{alacaoglu2024complexity}} & \makecell[l]{$\mathbb{E}[c(\bx,\zeta)] = 0$,\\ and $\bx \in X$ where \\ $X$ is easy to project} & \makecell[l]{Eq. \eqref{eq: stoc_oracle} and \\ Asmp. \ref{assumption: variance_reduction}} & $\widetilde{O}(\varepsilon^{-5})$ & 1 & Penalty \\
    \hline
    {\cite{lu2024variance}} & \makecell[l]{$c(\bx) = 0$,\\ and $\bx \in X$ where \\ $X$ is easy to project} & \makecell[l]{Eq. \eqref{eq: stoc_oracle} and \\ Asmp. \ref{assumption: variance_reduction}} & $O(\varepsilon^{-3})$ & 1 & Penalty \\
    \hline
    {\cite{li2024stochastic}} & \makecell[l]{$\mathbb{E}[c(\bx,\zeta)] = 0$,\\ and $\bx \in X$ where \\ $X$ is easy to project} & \makecell[l]{Eq. \eqref{eq: stoc_oracle} and \\ Asmp. \ref{assumption: variance_reduction}} & $O(\varepsilon^{-5})$ & 2 & Penalty$^\ast$ \\
    \hline
    This work & \makecell[l]{$A\bx = \bb$,\\ and $\bx \in X$ is a polyhedral} & Eq. \eqref{eq: stoc_oracle}  & $O(\varepsilon^{-4})$ & 1 & ALM \\
    \hline
    This work & \makecell[l]{$\mathbb{E}_\zeta[A(\zeta)\bx - \bb(\zeta)] = 0$,\\ and $\bx \in X$ is a polyhedral} & Eq. \eqref{eq: stoc_oracle} & $O(\varepsilon^{-4})$ & 1 & ALM  \\  \hline
    This work & \makecell[l]{$A\bx=\bb$,\\ and $\bx \in X $ is a polyhedral} & \makecell[l]{Eq. \eqref{eq: stoc_oracle} and \\ Asmp. \ref{assumption: variance_reduction}} & $O(\varepsilon^{-3})$ & 1 & ALM \\
    \hline
    \end{tabular}
    \caption{Comparison of methods. $^\ast$This method is referred to as a penalty method because the penalty parameter is taken to infinity to ensure feasibility and dual updates do not contribute in achieving feasibility.}
    \label{tab:1}
\end{table}

\subsection{Preliminaries}\label{sec: prelim}
We denote the indicator function of a  set $X$ as
\begin{equation*}
I_X(\bx) = \begin{cases} 0 &\text{~if~} \bx \in X, \\
+\infty &\text{~if~} \bx \not\in X. \end{cases}
\end{equation*}
The notation $\partial f$ for a convex, closed function denotes the subdifferential set and $\partial I_X(\bx)$ is the normal cone of $X$ at $\bx$ by definition. For a matrix $A$, we use $\| A\|$ to denote its operator norm.

Given closed and convex $X$, we denote the projection onto this set as
\begin{equation*}
\proj_X(\bx) = \arg\min_{\bv\in X} \| \bx-\bv\|^2.
\end{equation*}
Similarly, we define the proximal operator of $f$ as
\begin{equation*}
\prox_{f}(\bx) = \argmin_{\bv} f(\bv) + \frac{1}{2}\|\bv-\bx\|^2.
\end{equation*}
We say that $f$ is $L$-smooth when the gradient of $f$ is $L$-Lipschitz:
\begin{equation*}
\|\nabla f(\bx) - \nabla f(\by) \| \leq L \|\bx-\by\|.
\end{equation*}
We say that $f$ is $\rho$-weakly convex when $f+\frac{\rho}{2} \| \cdot\|^2$ is convex. An $L$-smooth function is automatically $L$-weakly convex.
The Moreau envelope of a weakly convex function $f$ is defined as
\begin{equation*}
\varphi_{\lambda}(\bz) = \min_{\bv} f(\bv) + \frac{1}{2\lambda} \| \bv-\bz\|^2,
\end{equation*}
which can be interpreted as a notion of \emph{smoothing}. Moreau envelope has many useful properties such as being smooth when $f$ is nonsmooth and weakly convex, and when $\lambda$ is selected accordingly. 
Moreover, the stationary points of $f$ and the Moreau envelope coincide \citep[Lemma 4.3]{drusvyatskiy2019efficiency}.

The gradient of the Moreau envelope is computed as
\begin{equation*}
\lambda^{-1}(\bx - \prox_{\lambda \varphi}(\bx)).
\end{equation*}
\paragraph{Stationary points. }
A succinct and standard way of characterizing a stationary point of \eqref{eq: prob1} is the following: we call $\bx^\star$ to be a stationary point if there exists $\by^\star$ such that the following requirements hold:
\begin{align*}
0 &\in \nabla f(\bx^\star) + A^\top \by^\star + \partial I_X(\bx^\star), \\
0 &= A\bx^\star - \bb.
\end{align*}
One may, for example, refer to \cite{rockafellar2000extended}.

Accordingly, we say that $(\bx, \by)$ is an \emph{$\varepsilon$-stationary point} if we have 
\begin{align*}
\|A\bx-\bb\|&\leq\varepsilon \text{~and~} \\
\| \bv\|&\leq \varepsilon \text{~where~} \bv \in \nabla f(x) + A^\top \by + \partial I_X(\bx),
\end{align*}
which is a common notion used in related works, for example \cite{zhang2022error}.

We also use the following related and weaker notion of \emph{near-stationarity}, as used in~\cite{davis2019stochastic}. We say that $\bx$ is \emph{$\varepsilon$-near stationary}, if it satisfies
\begin{equation}\label{eq: soas4}
\|\nabla \Psi(\bx) \|\leq\varepsilon,
\end{equation}
where $\Psi(\bx)$ is the Moreau envelope of the objective function $f(\bx)+I_X(\bx) + I_{\{\bv: A\bv=\bb\}}(\bx)$ in \eqref{eq: prob1}. We refer to \cite{davis2019stochastic} for the precise notion of near stationarity.
\subsection{Assumptions}
We proceed to state the assumptions that will be used throughout. These assumptions are standard and to our knowledge, the weakest, in the literature for both deterministic and stochastic nonconvex problems with linear constraints \citep{zhang2022error, alacaoglu2024complexity}. A more detailed comparison of assumptions will be made in Section \ref{sec: relwork}.
\begin{assumption}\label{asmp: 1}
For the problem \eqref{eq: prob1}, the following holds:
\begin{enumerate}
\item The function $f$ is $L_f$-smooth and is lower bounded over the feasible set, that is,
\begin{equation*}
f(\bx) \geq \underline f > -\infty,
\end{equation*}
for any $\bx\in X$ and $A\bx=\bb$.
\item The set $X$ admits an efficient projection and is polyhedral. That is, it has the form $X=\{\bx\colon H\bx\leq h \}$ for some $H, h$.
\item We have access to a stochastic gradient $F$ of $f$ that satisfies \eqref{eq: stoc_oracle}.
\end{enumerate}
\end{assumption}
\section{Algorithm}\label{sec: alg}
We introduce Algorithm \ref{alg: sgd} in this section. To gain a deeper understanding of the algorithm, we will go over two different ways of interpreting it. 
\paragraph{Interpretation 1: Linearized proximal ALM. } Algorithm \ref{alg: sgd} incorporates a single-step stochastic gradient descent approximation of the proximal augmented Lagrangian function. This strategy is also known as the linearized proximal ALM. In particular, the first step of the algorithm approximates the proximal AL function, that is,
\begin{equation*}
\bx_{t+1} \approx \arg\min_{\bx \in X} L_{\rho}(\bx, \by_{t+1}) + \frac{\lambda}{2} \| \bx-\bz_t\|^2,
\end{equation*}
by a single step of projected SGD, followed by a dual variable update and updating the proximal center $\bz_t$, by a combination of $\bz_{t}$ and $\bx_{t}$, resulting in the terminology \emph{smoothed} that we use for the algorithm.

\paragraph{Interpretation 2: Inexact GD on the Moreau envelope. } Algorithm \ref{alg: sgd}  can also be interpreted as an inexact gradient descent step on the Moreau envelope of the function in \eqref{eq: prob1}. In particular, the Moreau envelope of \eqref{eq: prob1} is given as
\begin{equation}\label{eq: moreau_orig}
\Psi(\bz_t) = \min_{\bx\in X, A\bx=\bb} \left\{f(\bx) + \frac{\lambda}{2} \| \bx-\bz_t\|^2\right\}.
\end{equation}
By observing that minimizing the Moreau envelope helps in obtaining a near-stationary point in view of \eqref{eq: soas4} (cf. \cite{davis2019stochastic}), inexact gradient update on this function requires the computation of
\begin{equation*}
\argmin_{\bx\in X, A\bx=\bb} \left\{f(\bx) + \frac{\lambda}{2} \| \bx-\bz_t\|^2\right\},
\end{equation*}
which is a nontrivial optimization subproblem. However, it is easier than \eqref{eq: prob1} because the regularization (given that $\lambda$ is larger than $L_f$) provides us a \emph{strongly convex objective} in the subproblem. As a result, we can approximate the solution of this problem by applying one iteration of ALM since this problem is a strongly convex optimization problem over linear constraints. We show that just one step of stochastic ALM is sufficient at every iteration by using a stochastic gradient computed with a single sample and one dual update, followed by the update of the proximal center $\bz_t$.

On the surface, this algorithm strongly resembles the algorithm of \cite{zhang2022error} where we draw many ideas. However, in addition to using stochastic gradients, there is another subtle change, on the update of $\bz_{t+1}$. Unlike \cite{zhang2022error}, we update $\bz_{t+1}$ by using $\bx_t$ to be able to continue the analysis with the bounded variance assumption on $G$ instead of boundedness assumption on $G$, since the latter would require bounded domains. Thanks to this small change, in this section, we can handle the case where both primal and dual domains are unbounded.
\begin{algorithm*}[ht!]
   \caption{Stochastic smoothed and linearized ALM}
   \label{alg:example}
\begin{algorithmic}
   \STATE {\bfseries Initialize:} $\bx_0=\bz_0 \in X$, $\by_0\in\mathbb{R}^m$ and $\rho \geq 0$.
   \FOR{$t=0$ {\bfseries to} $T-1$}
   \STATE $\by_{t+1} = \by_t + \eta(A\bx_t - \bb)$
   \STATE Sample $\xi_t \in \Xi$ i.i.d. and generate $\mathbb{E}_{\xi_t}[G(\bx_t, \by_{t+1}, \bz_t, \xi_t)] = \nabla_\bx L_{\rho}(\bx_t, \by_{t+1}) + \mu(\bx_t-\bz_t)$
   \STATE $\bx_{t+1} = \proj_X(\bx_t - \tau G(\bx_t, \by_{t+1}, \bz_t, \xi_t))$   
   \STATE $\bz_{t+1} = \bz_t + \beta(\bx_t-\bz_t)$
   \ENDFOR
\end{algorithmic}
\label{alg: sgd}
\end{algorithm*}

\section{Convergence Analysis}\label{sec: main_conv}
In this section, we first provide the main complexity results, then introduce the main analysis tools and a proof sketch.
\subsection{Main Theorem}
In view of the two stationary notions given in Section \ref{sec: prelim}, we start with the result showing that Algorithm \ref{alg: sgd} outputs a point at which the norm of the gradient of Moreau envelope is small, in expectation.

For the result, we state the algorithmic parameters. To avoid clutter, we write the orders of the parameters by highlighting their dependencies on the problem parameters. The explicit forms of the parameters with all the constants are given in \eqref{eq: param_defs_appendix} in Appendix \ref{app: a}.
\begin{equation}\label{eq: alg_params}
\begin{aligned}
\tau &\asymp \frac{1}{\sqrt{T}},\quad \eta \asymp \frac{1}{\sqrt{T}},\quad \beta \asymp \frac{1}{\sqrt{T}},\\
 \mu &\asymp L_f,\quad \lambda \asymp L_f + \mu(\|A\|^2+1).
\end{aligned}
\end{equation}
We are now ready to state the first main result.
\begin{theorem}\label{th: moreau_env_det}
Let \Cref{asmp: 1} hold and run Algorithm \ref{alg: sgd} with the parameters given in \eqref{eq: alg_params} (see also \eqref{eq: param_defs_appendix}). We have that $\mathbb{E}\| \nabla \Psi(\bz_{t^*})\|\leq\varepsilon$ where $t^*$ is selected uniformly at random from $\{1, \dots, T\}$ with $T=\Theta(\varepsilon^{-4})$. The stochastic oracle complexity is $O(\varepsilon^{-4})$.
\end{theorem}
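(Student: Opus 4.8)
The plan is to prove \Cref{th: moreau_env_det} via Interpretation 2, treating Algorithm \ref{alg: sgd} as an inexact gradient descent on the Moreau envelope $\Psi$ from \eqref{eq: moreau_orig} and measuring the inexactness through a primal-dual error-bound analysis of the proximal subproblem. First I would record the smoothness structure of $\Psi$: with $\lambda\asymp L_f+\mu(\|A\|^2+1)$ chosen large enough that $f(\cdot)+\frac{\lambda}{2}\|\cdot-\bz\|^2$ is strongly convex on the polyhedron $X\cap\{\bx:A\bx=\bb\}$, the minimizer $\bx^\star(\bz):=\argmin_{\bx\in X,\,A\bx=\bb}\{f(\bx)+\frac{\lambda}{2}\|\bx-\bz\|^2\}$ is unique, $\Psi$ is $L_\Psi$-smooth, and $\nabla\Psi(\bz)=\lambda(\bz-\bx^\star(\bz))$, so that $\|\nabla\Psi\|$ being small is exactly near-stationarity in the sense of \eqref{eq: soas4}. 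Applying the descent lemma to $\bz_{t+1}=\bz_t+\beta(\bx_t-\bz_t)$ and substituting $\bx_t-\bz_t=-\lambda^{-1}\nabla\Psi(\bz_t)+(\bx_t-\bx^\star(\bz_t))$ gives
\[
\Psi(\bz_{t+1})\le\Psi(\bz_t)-\frac{\beta}{\lambda}\|\nabla\Psi(\bz_t)\|^2+\beta\langle\nabla\Psi(\bz_t),\,\bx_t-\bx^\star(\bz_t)\rangle+\frac{L_\Psi\beta^2}{2}\|\bx_t-\bz_t\|^2 .
\]
The first term is the desired per-step decrease of order $\beta\|\nabla\Psi(\bz_t)\|^2$; the remaining terms are errors governed by the approximation quality $\bx_t-\bx^\star(\bz_t)$, which must be absorbed later.

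The crux, and the step I expect to be hardest, is to control the primal error $\bx_t-\bx^\star(\bz_t)$ together with the dual error $\by_t-\by^\star(\bz_t)$, where $(\bx^\star(\bz_t),\by^\star(\bz_t))$ is the saddle/KKT pair of the proximal augmented Lagrangian that the algorithm's primal-dual step approximately solves. Because $\bx_t$ is produced by a \emph{single} projected stochastic-gradient step on $L_\rho(\cdot,\by_{t+1})+\frac{\mu}{2}\|\cdot-\bz_t\|^2$ plus one dual ascent step, these errors are not pointwise small; instead I would track the potential $V_t=a\|\bx_t-\bx^\star(\bz_t)\|^2+b\|\by_t-\by^\star(\bz_t)\|^2$ and show that the step contracts $V_t$ up to three perturbations: (i) the gradient variance, contributing $\tau^2\sigma^2$, handled by conditioning on the natural filtration and using the unbiasedness of $G$ together with \eqref{eq: stoc_oracle}; (ii) the drift of the reference pair as $\bz_t$ moves, bounded by $\|\bx^\star(\bz_{t+1})-\bx^\star(\bz_t)\|\lesssim\beta\|\bx_t-\bz_t\|$ (and similarly for $\by^\star$) via Lipschitz continuity of the solution map; and, most delicately, (iii) the dual term, where the absence of a bounded dual domain forces the use of the \emph{global error bound} for linearly constrained problems. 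This error bound, available precisely because $X$ is polyhedral and $A\bx=\bb$ is linear, converts the KKT residual of the current iterate into an upper bound on $\|\bx_t-\bx^\star(\bz_t)\|$ and keeps $\|\by_t-\by^\star(\bz_t)\|$ controlled with \emph{non-negligible} dual step size $\eta\asymp\beta$ and constant $\rho$, which is exactly what separates this analysis from perturbed-penalty arguments. The design choice of using $\bx_t$ rather than $\bx_{t+1}$ in the $\bz$-update is what lets \eqref{eq: stoc_oracle} replace a bounded-gradient hypothesis, at the cost of bookkeeping a few extra cross terms.

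Finally, I would form the Lyapunov function $\Phi_t=\Psi(\bz_t)+c\,V_t$ and tune $a,b,c$ and the magnitude of $\lambda$ so that the cross term $\beta\langle\nabla\Psi(\bz_t),\bx_t-\bx^\star(\bz_t)\rangle$ and the drift terms are dominated by the contraction of $V_t$ and half of the $-\frac{\beta}{\lambda}\|\nabla\Psi(\bz_t)\|^2$ decrease. Taking expectations, telescoping over $t=0,\dots,T-1$, and using that $f$ (hence $\Psi$) is lower bounded over the feasible set would yield
\[
\frac{1}{T}\sum_{t=1}^{T}\mathbb{E}\|\nabla\Psi(\bz_t)\|^2\;\lesssim\;\frac{\Phi_0-\underline{\Phi}}{\beta T}+\frac{\tau^2\sigma^2}{\beta}.
\]
With $\tau\asymp\eta\asymp\beta\asymp T^{-1/2}$ both terms are $O(T^{-1/2})$, so for $t^*$ uniform on $\{1,\dots,T\}$ we get $\mathbb{E}\|\nabla\Psi(\bz_{t^*})\|^2=O(T^{-1/2})$ and, by Jensen's inequality, $\mathbb{E}\|\nabla\Psi(\bz_{t^*})\|=O(T^{-1/4})$. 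Requiring this to be at most $\varepsilon$ gives $T=\Theta(\varepsilon^{-4})$, and since each iteration draws one sample the stochastic oracle complexity is $O(\varepsilon^{-4})$.
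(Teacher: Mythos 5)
Your overall architecture — inexact gradient descent on $\Psi$, a Lyapunov function coupling $\Psi(\bz_t)$ with a measure of how well the single primal--dual step solves the proximal subproblem, telescoping, and the final $T^{-1/4}$ arithmetic with $\tau\asymp\beta\asymp T^{-1/2}$ — matches the paper's plan, and your endgame (Jensen, $T=\Theta(\varepsilon^{-4})$, one sample per iteration) is exactly the paper's. The gap is in the step you yourself flag as the crux: the claimed one-step contraction of $V_t=a\|\bx_t-\bx^\star(\bz_t)\|^2+b\|\by_t-\by^\star(\bz_t)\|^2$. The primal half is at best a contraction toward $\bx^*(\by_{t+1},\bz_t)=\argmin_{\bx\in X}K(\bx,\by_{t+1},\bz_t)$ (the minimizer of the function the SGD step is actually taken on), not toward the constrained proximal point $\bx^\star(\bz_t)$; bridging the two via \Cref{global error} costs a term $\bar\sigma^2\|A\bx^*(\by_{t+1},\bz_t)-\bb\|^2$ that must itself be driven to zero by the dual dynamics. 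The dual half is where the argument breaks: $d(\cdot,\bz)$ in \eqref{eq: def_d} is concave but not strongly concave, so a single dual ascent step does not contract $\|\by_t-\by^\star(\bz_t)\|$; moreover, in this section the paper assumes neither full row rank of $A$ nor any constraint qualification, so $\by^\star(\bz_t)$ need not be unique, the dual solution set can be unbounded, and the selection $\bz\mapsto\by^\star(\bz)$ need not be Lipschitz — so the drift bound you invoke for $\by^\star$ has no justification. \Cref{global error} is a \emph{primal} error bound (distance of $\bx^*(\by,\bz)$ to $\bar\bx^*(\bz)$ in terms of the feasibility residual); it does not convert into a dual quadratic-growth or contraction statement without further work.

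The paper's proof avoids exactly this obstruction by using function values rather than distances in the dual and primal blocks: the potential is $V_t=\varphi_{1/\lambda}(\bx_t,\by_t,\bz_t)-2d(\by_t,\bz_t)+2\Psi(\bz_t)$ with $\varphi_{1/\lambda}$ from \eqref{eq: varphi_def}. The dual enters only through $-2d(\by_t,\bz_t)$, and the factor $2$ is what makes the mechanism work: the inner product $\eta\langle A\bx_t-\bb,\,A\bx^*(\by_{t+1},\bz_t)-\bb\rangle$ appears once with a negative sign from the change of $\by$ in $\varphi_{1/\lambda}$ (\Cref{lem: phi_misc}) and twice with a positive sign from the ascent of $d$ (\Cref{lem: swo4}); completing the square leaves $+\eta\|A\bx^*(\by_{t+1},\bz_t)-\bb\|^2$ on the good side of \Cref{lem: appendix_descent}, which is then converted into control of $\|\nabla\Psi(\bz_t)\|$ via \Cref{global error}. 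This extraction of the feasibility residual has no analogue in your distance-based potential, and without it neither $\|A\bx^*(\by_{t+1},\bz_t)-\bb\|$ nor, consequently, $\|\bx_t-\bx^\star(\bz_t)\|$ gets controlled. To repair your argument you would have to replace $b\|\by_t-\by^\star(\bz_t)\|^2$ by $-2d(\by_t,\bz_t)$ (or a dual gap with a proved quadratic-growth property), at which point you essentially recover the paper's proof.
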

In particular, the above result gives us an $\varepsilon$-near stationary point in view of \cite{davis2019stochastic}.

Next, to get an $\varepsilon$-stationary point, we perform a post-processing procedure to obtain the following output from the result of Algorithm \ref{alg: sgd}:
\begin{equation}
\hat{\bx} = \proj_X(\bx_{t^*} - \tau \hat{G}(\bx_{t^*}, \by_{t^*+1}, \bz_{t^*})),
\end{equation}
with $\tau \leq \frac{1}{L_K}$ where $L_K$ is the Lipschitz constant of $L_{\rho}(\cdot, \by, \bz) + \frac{\lambda}{2} \| \cdot - \bx\|^2$ (cf. \eqref{eq: param_defs_appendix}) and
\begin{equation*}
\hat{G}(\bx_{t^*}, \by_{t^*+1}, \bz_{t^*}) = \frac{1}{B}\sum_{i=1}^B G(\bx_{t^*}, \by_{t^*+1}, \bz_{t^*}, \xi_i)
\end{equation*}
for $\xi_i$ i.i.d. and $B=\Omega(\varepsilon^{-2})$. We note that this is the only place where we use a large batch size and our algorithm only runs with a single sample at every iteration. This post processing step is only done once and does not affect the overall complexity. The details are given in \Cref{subsec: postprocess}.
\begin{corollary}\label{cor: postprocess}
Let Assumption \ref{alg: sgd} hold. From the output of Algorithm \ref{alg: sgd}, we can obtain an output $\hat \bx$ which is an $\varepsilon$-stationary point. The complexity of the whole procedure is $O(\varepsilon^{-4})$.
\end{corollary}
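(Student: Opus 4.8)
The plan is to turn the near-stationarity guarantee of Theorem~\ref{th: moreau_env_det} into a computable $\varepsilon$-stationary point by analyzing the single post-processing step. First I would make the geometric content of $\nabla\Psi(\bz_{t^*})$ explicit. Writing $\bx^+=\argmin_{\bx\in X,\,A\bx=\bb}\{f(\bx)+\frac{\lambda}{2}\|\bx-\bz_{t^*}\|^2\}$ for the exact proximal point, the gradient formula gives $\nabla\Psi(\bz_{t^*})=\lambda(\bz_{t^*}-\bx^+)$, and the KKT conditions of this (strongly convex, since $\lambda\gtrsim L_f$) subproblem produce a multiplier $\nu$ with $\lambda(\bz_{t^*}-\bx^+)\in\nabla f(\bx^+)+A^\top\nu+\partial I_X(\bx^+)$, while $A\bx^+=\bb$ and $\bx^+\in X$ hold exactly. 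Hence $\bx^+$ is already an $\varepsilon$-stationary point in expectation; the only issue is that $\bx^+$ is not computable, which is precisely why we replace it by the surrogate $\hat\bx$.

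Next I would view the post-processing as one inexact projected-gradient step, with step $\tau\le 1/L_K$ and minibatch $\hat G$ of size $B=\Omega(\varepsilon^{-2})$, applied to the inner function $K(\bx)=L_\rho(\bx,\by_{t^*+1})+\frac{\mu}{2}\|\bx-\bz_{t^*}\|^2$ over $X$ started at $\bx_{t^*}$. The optimality condition of $\proj_X$ gives $\frac{1}{\tau}(\bx_{t^*}-\hat\bx)-\hat G(\bx_{t^*},\by_{t^*+1},\bz_{t^*})\in\partial I_X(\hat\bx)$. Defining the surrogate multiplier $\hat\by=\by_{t^*+1}+\rho(A\hat\bx-\bb)$ and recalling $\nabla K(\bx)=\nabla f(\bx)+A^\top\by_{t^*+1}+\rho A^\top(A\bx-\bb)+\mu(\bx-\bz_{t^*})$, I would isolate the explicit residual
\[
\bv=\tfrac{1}{\tau}(\bx_{t^*}-\hat\bx)+\bigl(\nabla K(\hat\bx)-\nabla K(\bx_{t^*})\bigr)-\epsilon_B-\mu(\hat\bx-\bz_{t^*})\in\nabla f(\hat\bx)+A^\top\hat\by+\partial I_X(\hat\bx),
\]
where $\epsilon_B=\hat G(\bx_{t^*},\by_{t^*+1},\bz_{t^*})-\nabla K(\bx_{t^*})$ is the minibatch error. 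Smoothness of $K$ then yields $\|\bv\|\le(\tfrac{1}{\tau}+L_K)\|\hat\bx-\bx_{t^*}\|+\|\epsilon_B\|+\mu\|\hat\bx-\bz_{t^*}\|$.

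The remaining work is to bound, in expectation, each of $\|\hat\bx-\bx_{t^*}\|$, $\|\epsilon_B\|$, $\|\hat\bx-\bz_{t^*}\|$, together with the feasibility $\|A\hat\bx-\bb\|$, by $O(\varepsilon)$. The minibatch term is immediate: $\mathbb{E}\|\epsilon_B\|\le\sigma/\sqrt{B}=O(\varepsilon)$ from \eqref{eq: stoc_oracle}. For the other three I would use that $K$ is strongly convex over $X$ (since $\mu\asymp L_f$ dominates the $-L_f$ curvature of $f$ while $\rho A^\top A\succeq0$), so its constrained minimizer $\bx^\sharp$ is unique and the inexact step contracts $\hat\bx$ toward $\bx^\sharp$; it then suffices to show that $\bx^\sharp$, $\bx^+$, $\bz_{t^*}$ and $\bx_{t^*}$ are mutually $O(\varepsilon)$-close, for which the bound $\|\bz_{t^*}-\bx^+\|=\lambda^{-1}\|\nabla\Psi(\bz_{t^*})\|\le\varepsilon/\lambda$ from Theorem~\ref{th: moreau_env_det} is the anchor and the remaining distances are controlled by the same potential quantities the main analysis already tracks. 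Feasibility is then obtained from the \emph{global error bound} for the polyhedral system $\{\bx:H\bx\le h,\ A\bx=\bb\}$: proximity of $\hat\bx$ to the exactly feasible $\bx^+$ is converted into $\mathbb{E}\|A\hat\bx-\bb\|=O(\varepsilon)$.

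Collecting these estimates gives $\mathbb{E}\|\bv\|=O(\varepsilon)$ and $\mathbb{E}\|A\hat\bx-\bb\|=O(\varepsilon)$, and rescaling the target accuracy by the (constant) factors produces an $\varepsilon$-stationary point. For the complexity, the main loop costs $O(\varepsilon^{-4})$ by Theorem~\ref{th: moreau_env_det} and the single post-processing step costs $B=\Omega(\varepsilon^{-2})=O(\varepsilon^{-4})$ samples, so the total remains $O(\varepsilon^{-4})$. The step I expect to be the main obstacle is controlling the feasibility $\|A\hat\bx-\bb\|$ and the proximal gap $\mu\|\hat\bx-\bz_{t^*}\|$ simultaneously: because the inner step penalizes $A\bx=\bb$ rather than enforcing it and uses the prox weight $\mu$ while the envelope uses $\lambda$, these cannot be handled by convexity alone and genuinely require the global error bound to trade infeasibility against distance to the feasible proximal point.
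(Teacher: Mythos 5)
Your proposal is correct and follows essentially the same route as the paper's proof in Appendix A.3: the optimality condition of the projection defines a residual $\bv$ lying in $\nabla f(\hat\bx)+A^\top\by+\partial I_X(\hat\bx)$, the minibatch error contributes $\sigma/\sqrt{B}=O(\varepsilon)$, the exact projected-gradient step is nonexpansive toward the minimizer of a strongly convex surrogate, and all remaining distances and the feasibility residual reduce by triangle inequalities to the three quantities $\|\bx_{t^*}-\bz_{t^*}\|$, $\|\bu^*(\bx_{t^*},\by_{t^*+1},\bz_{t^*})-\bx_{t^*}\|$, $\|A\bx^*(\by_{t^*+1},\bz_{t^*})-\bb\|$ already bounded by Theorem~\ref{th: moreau_env_det}. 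The only (harmless) deviations are that you shift the multiplier to $\hat\by=\by_{t^*+1}+\rho(A\hat\bx-\bb)$ rather than keeping $\by_{t^*+1}$ and absorbing $\rho A^\top(A\hat\bx-\bb)$ into $\bv$, and you contract toward $\bx^*(\by_{t^*+1},\bz_{t^*})$ instead of $\bu^*(\bx_{t^*},\by_{t^*+1},\bz_{t^*})$ (linked by \eqref{x-s*}); also, the terms you flag as the main obstacle in fact follow directly from \eqref{eq: asn4} without any further use of the global error bound at this stage.
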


\subsection{Analysis Tools}
In our analysis, Moreau envelopes of two functions are critical. The first was the Moreau envelope of the composite objective in \eqref{eq: prob1}, defined in \eqref{eq: moreau_orig}. We next define the Moreau envelope on the proximal AL function which is the main function to analyze projected SGD update (cf. \cite{davis2019stochastic}):
\begin{align}
\varphi_{1/\lambda}(\bx, \by, \bz) = \min_{\bu\in X} \big\{L_{\rho}(\bu, \by) &+ \frac{\mu}{2} \| \bu - \bz\|^2 + \frac{\lambda}{2} \| \bu - \bx \|^2\big\}.\label{eq: varphi_def}
\end{align}
Another important quantity that has a significant role in the analysis is the proximal point with respect to the last definition.
\begin{equation}\label{eq: aeh4}
\begin{aligned}
\bu^*(\bx, \by, \bz) = \argmin_{\bu\in X} L_{\rho}(\bu, \by) &+ \frac{\mu}{2} \| \bu - \bz\|^2 + \frac{\lambda}{2} \| \bu - \bx \|^2.
\end{aligned}
\end{equation}
With this, we have
\begin{align*}
\varphi_{1/\lambda}&(\bx, \by, \bz)=L_\rho(\bu^*(\bx, \by, \bz), \by)+ \frac{\mu}{2} \|\bu^*(\bx, \by, \bz) - \bz\|^2 + \frac{\lambda}{2} \| \bu^*(\bx, \by, \bz) - \bx \|^2.
\end{align*}
Note that this is the main point of departure from \cite{zhang2022error} where the proximal AL function is used in the analysis and the potential function. This is because \cite{zhang2022error} used a projected \emph{full} GD step on the proximal AL function for which, a descent inequality follows directly. In our case, because we apply a projected SGD step, to be able to handle updates with single-sample stochastic gradients, we need to use the Moreau envelope of the proximal AL function in our potential. This analysis of projected SGD was pioneered in \cite{davis2019stochastic}.

The first result is a descent-type result on the Moreau envelope.
\begin{lemma}(cf. \Cref{lem: swo3})
Let Assumption \ref{asmp: 1} hold and set $\lambda = L_K, \tau \leq \frac{1}{6\lambda}$. Then for the $\bx_{t+1}$ update given in Algorithm \ref{alg: sgd}, we have
\begin{align*}
\mathbb{E} \left[ \varphi_{1/\lambda}(\bx_{t+1}, \by_{t+1}, \bz_{t+1}) \right] &\leq \mathbb{E} \left[ \varphi_{1/\lambda}(\bx_{t}, \by_{t+1}, \bz_{t+1}) \right] - \frac{\tau \lambda^2}{16} \mathbb{E}\| \bu^*(\bx_t, \by_{t+1}, \bz_t) - \bx_t\|^2 + \lambda \tau^2\sigma^2 \\
&\quad+\left( \lambda \tau \mu + 2\lambda \tau^2 \mu^2 + \tau \lambda^2 \mu^2/8\gamma_s^2 \right) \mathbb{E}\| \bz_t - \bz_{t+1}\|^2,
\end{align*}
where $\gamma_s=2\mu + \rho\|A\|, L_K = L_f + \rho \|A\| + \mu$.
\end{lemma}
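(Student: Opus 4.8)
The plan is to run the Moreau-envelope analysis of projected stochastic gradient from \cite{davis2019stochastic}, adapted to the three-argument envelope $\varphi_{1/\lambda}$ and fused with the deterministic proximal-contraction estimate of \cite{zhang2022error}. The starting point is that, once $\by_{t+1}$ and $\bz_{t+1}$ are frozen, the map $\bx\mapsto\varphi_{1/\lambda}(\bx,\by_{t+1},\bz_{t+1})$ is precisely the Moreau envelope (with parameter $1/\lambda$) of $\bu\mapsto L_\rho(\bu,\by_{t+1})+\frac{\mu}{2}\|\bu-\bz_{t+1}\|^2+I_X(\bu)$. Since $f$ is $L_f$-smooth this inner function is $(L_f-\mu)$-weakly convex, so with $\lambda=L_K$ the subproblem defining $\bu^*$ is $\gamma_s$-strongly convex and $L_K$-smooth; these two moduli drive both the descent and the error coefficients.

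First I would let $\tilde\bu=\bu^*(\bx_t,\by_{t+1},\bz_{t+1})$, insert $\tilde\bu$ as a feasible point in the minimization defining $\varphi_{1/\lambda}(\bx_{t+1},\by_{t+1},\bz_{t+1})$, and subtract the exact identity $\varphi_{1/\lambda}(\bx_t,\by_{t+1},\bz_{t+1})=L_\rho(\tilde\bu,\by_{t+1})+\frac{\mu}{2}\|\tilde\bu-\bz_{t+1}\|^2+\frac{\lambda}{2}\|\tilde\bu-\bx_t\|^2$. This reduces the claim to bounding $\frac{\lambda}{2}\|\tilde\bu-\bx_{t+1}\|^2-\frac{\lambda}{2}\|\tilde\bu-\bx_t\|^2$, i.e.\ to showing that the projected step $\bx_{t+1}=\proj_X(\bx_t-\tau G(\bx_t,\by_{t+1},\bz_t,\xi_t))$ contracts toward the fixed point $\tilde\bu$. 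For this I would use the three-point inequality for the projection (equivalently, the $\tfrac1\tau$-strong convexity of the prox objective) rather than plain nonexpansiveness, leaving the sharper bound $-\tfrac{\lambda}{2}\|\bx_{t+1}-\bx_t\|^2+\lambda\tau\langle G,\tilde\bu-\bx_{t+1}\rangle$.

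Next I would split $G=\bar G_t+e_t$ with $\bar G_t=\nabla_\bx L_\rho(\bx_t,\by_{t+1})+\mu(\bx_t-\bz_t)$, $\mathbb{E}_{\xi_t}[e_t]=0$ and $\mathbb{E}_{\xi_t}\|e_t\|^2\le\sigma^2$ (inherited from \eqref{eq: stoc_oracle}), and further write $\bar G_t=\nabla_\bx L_\rho(\bx_t,\by_{t+1})+\mu(\bx_t-\bz_{t+1})+\mu(\bz_{t+1}-\bz_t)$ to separate the exact $\bz_{t+1}$-gradient from the center-mismatch bias. For the exact gradient I would invoke the estimate of \cite{zhang2022error}: optimality of $\tilde\bu$, $\gamma_s$-strong convexity (giving $\langle\nabla_\bx L_\rho(\bx_t,\by_{t+1})+\mu(\bx_t-\bz_{t+1}),\bx_t-\tilde\bu\rangle\ge\gamma_s\|\bx_t-\tilde\bu\|^2$ after accounting for the normal cone of $X$ and weak convexity) and $L_K$-smoothness together with $\tau\le\frac{1}{6\lambda}$ to absorb $\frac{\lambda}{2}\|\bx_{t+1}-\bx_t\|^2$, produce the descent $-\tfrac{\tau\lambda^2}{8}\|\tilde\bu-\bx_t\|^2$. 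The zero-mean part $e_t$ is treated so that only its second moment survives: the cross term $\langle e_t,\bx_t-\bx_{t+1}\rangle$ is controlled via $\|\proj_X(\bx_t-\tau G)-\proj_X(\bx_t-\tau\bar G_t)\|\le\tau\|e_t\|$ and Young's inequality, giving the lone $\lambda\tau^2\sigma^2$ term.

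The hard part will be precisely this tight stochastic estimate: one must avoid the naive nonexpansiveness bound $\|\tilde\bu-\bx_{t+1}\|^2\le\|\tilde\bu-\bx_t+\tau G\|^2$, which leaves a free $\tfrac{\lambda\tau^2}{2}\|\bar G_t\|^2$ term that is \emph{not} controllable on unbounded domains, where $\|\bar G_t\|$ can be large even when $\|\tilde\bu-\bx_t\|=0$. The remaining work is bookkeeping for the two proximal centers. Carrying the bias $\mu(\bz_{t+1}-\bz_t)$ through the inner product and the quadratic term and applying Young produces the $\lambda\tau\mu$ and $2\lambda\tau^2\mu^2$ coefficients of $\|\bz_t-\bz_{t+1}\|^2$. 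Finally, the descent is in $\|\tilde\bu-\bx_t\|^2$ with $\tilde\bu$ centered at $\bz_{t+1}$, whereas the statement uses $\bu^*(\bx_t,\by_{t+1},\bz_t)$; since $\bz\mapsto\bu^*(\bx_t,\by_{t+1},\bz)$ is $\tfrac{\mu}{\gamma_s}$-Lipschitz (the minimizer of a $\gamma_s$-strongly convex objective whose gradient shifts by $\mu\|\bz_t-\bz_{t+1}\|$), the inequality $\|\tilde\bu-\bx_t\|^2\ge\tfrac12\|\bu^*(\bx_t,\by_{t+1},\bz_t)-\bx_t\|^2-\tfrac{\mu^2}{\gamma_s^2}\|\bz_t-\bz_{t+1}\|^2$ converts the $-\tfrac{\tau\lambda^2}{8}$ descent into the stated $-\tfrac{\tau\lambda^2}{16}$ term plus the $\tfrac{\tau\lambda^2\mu^2}{8\gamma_s^2}\|\bz_t-\bz_{t+1}\|^2$ term. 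Summing the three contributions and taking total expectation yields the claim.
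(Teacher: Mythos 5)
Your proposal is correct and shares the paper's overall architecture --- insert $\tilde\bu=\bu^*(\bx_t,\by_{t+1},\bz_{t+1})$ as a feasible point in the minimization defining $\varphi_{1/\lambda}(\bx_{t+1},\by_{t+1},\bz_{t+1})$, reduce everything to a one-step contraction of $\|\tilde\bu-\bx_{t+1}\|^2$ toward $\|\tilde\bu-\bx_t\|^2$, and only at the very end shift the center from $\bz_{t+1}$ to $\bz_t$ via the $\tfrac{\mu}{\gamma_s}$-Lipschitzness of $\bz\mapsto\bu^*(\bx,\by,\bz)$ (the paper's \eqref{s^*(x,y,z')-s^*(x,y,z)}), which is exactly how the $-\tfrac{\tau\lambda^2}{8}$ becomes $-\tfrac{\tau\lambda^2}{16}$ plus the $\tfrac{\tau\lambda^2\mu^2}{8\gamma_s^2}$ term. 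Where you genuinely diverge is the mechanism of the contraction itself. The paper follows \cite{davis2019stochastic} literally: it first proves the fixed-point identity \eqref{eq: axg5}, writing $\tilde\bu=\proj_X(\tau\lambda\bx_t+(1-\tau\lambda)\tilde\bu-\tau\nabla_\bx K(\tilde\bu,\by_{t+1},\bz_{t+1}))$, and then applies plain nonexpansiveness of $\proj_X$ to the \emph{difference of the two projection arguments}, which is automatically of order $\|\tilde\bu-\bx_t\|+\tau\|e_t\|+\tau\mu\|\bz_{t+1}-\bz_t\|$ (the noise cross term vanishes in conditional expectation because $\bz_{t+1}$ depends only on $\bx_t$), so the dreaded $\|\bar G_t\|^2$ never appears. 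You instead use the three-point inequality of the projection plus strong monotonicity of the prox subproblem's subdifferential; this also works and arguably gives a slightly cleaner descent constant, but the step you compress into ``absorb $\tfrac{\lambda}{2}\|\bx_{t+1}-\bx_t\|^2$'' is where the real work hides: the inner product $\lambda\tau\langle\bar G_t,\bx_t-\bx_{t+1}\rangle$ cannot be Young'd directly (that reintroduces $\|\bar G_t\|^2$); you must pass through the convexity--smoothness sandwich $\langle\nabla_\bx K(\bx_t),\tilde\bu-\bx_{t+1}\rangle\le K(\tilde\bu)-K(\bx_{t+1})-\tfrac{\mu-L_f}{2}\|\tilde\bu-\bx_t\|^2+\tfrac{L_K}{2}\|\bx_{t+1}-\bx_t\|^2$ and then invoke optimality of $\tilde\bu$ for the $\gamma_s$-strongly convex subproblem to convert $K(\tilde\bu)-K(\bx_{t+1})$ into quadratics. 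With that spelled out, your route is a valid and self-contained alternative to the paper's fixed-point argument; what the paper's version buys is that the normal-cone and gradient terms are packaged once and for all inside the projection, so no case-by-case handling of $\langle\bar G_t,\bx_t-\bx_{t+1}\rangle$ is ever needed.
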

This follows mostly from \cite{davis2019stochastic} and handles the transition from $\bx_t$ to $\bx_{t+1}$ in our analysis. One additional error term we have here is $\|\bz_{t+1} - \bz_t\|^2$, due to the change in the proximal center $\bz_t$, a term that was not involved in the analysis of \cite{davis2019stochastic}.

Next, we have to incorporate the dynamics of the updates on the dual variable $\by_t$ and the proximal center $\bz_t$ on top of the previous result. These results will use some ideas from \cite{zhang2022error} with some additional insights. The reason is that since \cite{zhang2022error}  uses the function $L_\rho(\bx, \by) + \frac{\lambda}{2} \| \bx-\bz\|^2$ in their potential function, their analysis only characterizes the change in $\by$ and $\bz$ in this function. Our analysis however, needs to characterize this change in the Moreau envelope of this function. This requires further estimations using the properties of the Moreau envelope, as well as the proximal point $\bu^*(\bx, \by, \bz)$ (see, for example, \Cref{lem: phi_misc} and Appendix \ref{app: a}).
\begin{lemma}(cf. \Cref{lem: phi_misc})
Let Assumption \ref{asmp: 1} hold, then for the iterates generated by Algorithm \ref{alg: sgd}, we have
\begin{align*}
\mathbb{E} \left[ \varphi_{1/\lambda}(\bx_{t}, \by_{t+1}, \bz_{t+1}) \right] &\leq \mathbb{E} \left[ \varphi_{1/\lambda}(\bx_{t}, \by_{t}, \bz_{t}) \right] -\mathbb{E}\langle \by_{t+1} -\by_t, A\bu^*(\bx_t, \by_t, \bz_t) - \bb \rangle \\
&\quad- \frac{\mu}{2} \mathbb{E}\langle \bz_{t}- \bz_{t+1}, 2\bu^*(\bx_t, \by_{t+1}, \bz_t) - \bz_{t+1} - \bz_t \rangle.
\end{align*}
\end{lemma}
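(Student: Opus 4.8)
The plan is to isolate the two effects in the transition $(\by_t,\bz_t)\to(\by_{t+1},\bz_{t+1})$ separately, keeping $\bx=\bx_t$ frozen throughout, and to exploit that $\varphi_{1/\lambda}$ is defined by an inner minimization over $\bu\in X$. The single device I will use for both the dual and the center update is a suboptimality comparison: writing the inner objective as $\Phi(\bu;\theta)=L_\rho(\bu,\by)+\tfrac{\mu}{2}\|\bu-\bz\|^2+\tfrac{\lambda}{2}\|\bu-\bx\|^2$ with $\theta$ standing for whichever parameter is being moved, and letting $\bu^*(\theta)$ be its minimizer, one has for any $\theta'$ the bound $\min_{\bu\in X}\Phi(\bu;\theta')-\min_{\bu\in X}\Phi(\bu;\theta)\le \Phi(\bu^*(\theta);\theta')-\Phi(\bu^*(\theta);\theta)$, simply because $\bu^*(\theta)$ stays feasible (the set $X$ does not change) and is optimal at $\theta$. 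The key point is that this never requires me to track how the minimizer moves, only to evaluate the explicit parameter-dependence of $\Phi$ at a single fixed point.

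I would then split the move into two one-parameter steps, performed \emph{in this order}: first update the dual, $\by_t\to\by_{t+1}$ at fixed center $\bz_t$, and second update the center, $\bz_t\to\bz_{t+1}$ at the new dual $\by_{t+1}$. This ordering is precisely what produces the two different proximal points appearing in the statement: the dual step is anchored at $\bu^*(\bx_t,\by_t,\bz_t)$, while the center step is anchored at the intermediate minimizer $\bu^*(\bx_t,\by_{t+1},\bz_t)$. For the dual step, $\by$ enters $\Phi$ (through $L_\rho$) only via the linear term $\langle A\bu-\bb,\by\rangle$, so the comparison collapses to $\langle A\bu^*(\bx_t,\by_t,\bz_t)-\bb,\,\by_{t+1}-\by_t\rangle$, giving the first inner-product term.

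For the center step, $\bz$ enters $\Phi$ only through $\tfrac{\mu}{2}\|\bu-\bz\|^2$, so with $\bu^*_1:=\bu^*(\bx_t,\by_{t+1},\bz_t)$ the comparison yields $\tfrac{\mu}{2}\bigl(\|\bu^*_1-\bz_{t+1}\|^2-\|\bu^*_1-\bz_t\|^2\bigr)$. Expanding this with the polarization identity $\|a-c\|^2-\|a-d\|^2=\langle d-c,\,2a-c-d\rangle$, taken at $a=\bu^*_1$, $c=\bz_{t+1}$, $d=\bz_t$, produces exactly $\tfrac{\mu}{2}\langle \bz_t-\bz_{t+1},\,2\bu^*_1-\bz_{t+1}-\bz_t\rangle$. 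Adding the two one-step bounds and taking expectations — the inequality is pathwise, so this is immediate — assembles the two inner-product terms and gives the claim.

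The genuine care-point, rather than a deep obstacle, is the well-definedness and uniqueness of $\bu^*(\cdot)$, which is what makes both $\varphi_{1/\lambda}$ and the comparison meaningful: the inner objective is $(\lambda+\mu-L_f)$-strongly convex, since $f$ is $L_f$-smooth hence $L_f$-weakly convex and the choice $\lambda\asymp L_f+\mu(\|A\|^2+1)$ forces $\lambda+\mu>L_f$. Beyond that, the only thing one must get right is the bookkeeping: applying the suboptimality inequality in the correct (upper-bound) direction by retaining the \emph{previous} minimizer rather than the new one, and ensuring the center step is taken at the post-update dual $\by_{t+1}$ so that $\bu^*(\bx_t,\by_{t+1},\bz_t)$ appears in the second term instead of $\bu^*(\bx_t,\by_t,\bz_t)$.
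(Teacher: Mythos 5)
Your proposal is correct and follows essentially the same route as the paper's proof of \Cref{lem: phi_misc}: split the move into a dual step at fixed $\bz_t$ anchored at $\bu^*(\bx_t,\by_t,\bz_t)$, then a center step at the updated dual $\by_{t+1}$ anchored at $\bu^*(\bx_t,\by_{t+1},\bz_t)$, comparing the two inner minima by evaluating the new objective at the old minimizer. The only substantive difference is that the paper retains the strong-convexity slack $\tfrac{\gamma_s}{2}\|\bu^*(\theta)-\bu^*(\theta')\|^2$ in each step, which it then discards in \Cref{cor: asv4}, so your bare suboptimality comparison suffices here. One bookkeeping caveat: what you actually derive is $\varphi_{1/\lambda}(\bx_t,\by_{t+1},\bz_{t+1})\leq\varphi_{1/\lambda}(\bx_t,\by_t,\bz_t)-\langle \by_t-\by_{t+1},A\bu^*(\bx_t,\by_t,\bz_t)-\bb\rangle-\tfrac{\mu}{2}\langle \bz_{t+1}-\bz_t,2\bu^*(\bx_t,\by_{t+1},\bz_t)-\bz_{t+1}-\bz_t\rangle$, which agrees with \Cref{lem: phi_misc} and with how the bound is used downstream, but is the negative of the inner products as literally typeset in the main-text display (whose difference vectors appear in reversed order); your version is the consistent one, so do not "fix" your signs to match the printed statement.
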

It is easy to notice that combining the last two lemmas will give us a bound on the change of $\varphi_{1/\lambda}$ between timesteps $t$ and $t+1$. On the other hand, the inner products appearing on the right-hand side of the last bound will require an intricate analysis after combining with the terms coming from other components in the potential function, introduced next. One aim, is to make sure we get enough slack to be able to cancel error terms coming from $\|\bz_{t+1} - \bz_t\|^2$ in the previous lemma and further errors that will arise as we handle the inner products. 
 
\subsection{Proof Sketch}
\subsubsection{One iteration inequality on the potential function}
As alluded to earlier, we introduce the potential function we work with, which incorporates the Moreau envelopes defined earlier in \eqref{eq: moreau_orig} and \eqref{eq: varphi_def}:
\begin{equation*}
V_t = \varphi_{1/\lambda}(\bx_t, \by_t, \bz_t) - 2d(\by_t, \bz_t) + 2\Psi(\bz_t),
\end{equation*}
where we use
\begin{equation}\label{eq: def_d}
d(\by, \bz) = \min_{\bx\in X} L_\rho(\bx, \by) + \frac{\mu}{2} \| \bx-\bz\|^2.
\end{equation}
There are two main changes compared to the analysis of \cite{zhang2022error}. The first is that the \emph{primal descent} portion of our analysis investigates the behavior of the Moreau envelope of the proximal AL function (given in \eqref{eq: varphi_def}) whereas the analysis of \cite{zhang2022error} analyzes the proximal AL function (given in \eqref{eq: k_def_main}) directly. 

The reason for this departure is the well-known difficulty while analyzing SGD for constrained problems with single sample of stochastic gradients. Hence, it is not clear if it is possible to show descent for the proximal AL function in the constrained case without using large minibatch sizes. In particular, until the work of \cite{davis2019stochastic}, convergence analyses of projected SGD required large batches.

In addition to combining the bounds from the previous section on the change of $\varphi_{1/\lambda}$, we have to characterize the change in $d(\by, \bz)$ and $\Psi(\bz)$, for which we can use the following estimations, which only use the definition of $\by_{t+1}$ and hence have the same proof as the previous work.
\begin{lemma}{\citep[Lemma 3.2, Lemma 3.3]{zhang2020proximal}}\label{lem: swo4}
For the functions $d(\by, \bz)$ and $\Psi(\bz)$ defined in \eqref{eq: moreau_orig} and \eqref{eq: def_d}, we have
    \begin{align*} 
        d(\by_{t+1}, \bz_{t+1})-d(\by_t, \bz_t) \geq \eta \langle A\bx_{t}-\bb, A\bx^*(\by_{t+1}, \bz_t)-\bb \rangle+\frac{\mu}{2}\langle \bz_{t+1}-\bz_{t}, \bz_{t+1}+\bz_{t}-2\bx^*(\by_{t+1},\bz_{t+1}) \rangle,
    \end{align*}
    and
    \begin{align*}
    \Psi(\bz_{t+1})-\Psi(\bz_t)&\leq \mu\langle \bz_{t+1}-\bz_t, \bz_t-\bar{\bx}^*(\bz_t)\rangle+\frac{\mu}{2\sigma_4}\|\bz_t-\bz_{t+1}\|^2,
    \end{align*}
where $\sigma_4=\frac{\mu-L_f}{\mu}$ and
\begin{align}
\bx^*(\by, \bz) &=\argmin_{\bx\in X} L_\rho(\bx, \by) + \frac{\mu}{2} \| \bx-\bz\|^2,\label{eq: x_def}\\
\bar{\bx}^*(\bz) &=\argmin_{\bx\in X, A\bx=\bb} f(\bx) + \frac{\mu}{2} \| \bx-\bz\|^2.
\end{align}
\end{lemma}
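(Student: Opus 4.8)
The plan is to derive both inequalities from one elementary principle for parametric minimization. For a value function $v(\theta) = \min_{\bu}\phi(\bu,\theta)$ with minimizer $\bu(\theta)$, bounding the value at one parameter by plugging in the minimizer of the other gives the two-sided comparison
\[
\phi(\bu(\theta'),\theta') - \phi(\bu(\theta'),\theta) \;\le\; v(\theta') - v(\theta) \;\le\; \phi(\bu(\theta),\theta') - \phi(\bu(\theta),\theta).
\]
Both $d$ and $\Psi$ are of this form, and their inner objectives are $(\mu-L_f)$-strongly convex in the primal variable because $f$ is $L_f$-weakly convex, the linear and penalty pieces of $L_\rho$ are convex, and $\mu \gtrsim L_f$ under \eqref{eq: alg_params}; hence $\bx^*(\by,\bz)$ and $\bar{\bx}^*(\bz)$ are well defined and unique and the comparison applies. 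Throughout I use $\by_{t+1}-\by_t = \eta(A\bx_t-\bb)$ from Algorithm~\ref{alg: sgd}.

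For the lower bound on $d$, I would route the increment through the intermediate parameter $(\by_{t+1},\bz_t)$, writing the difference as $[d(\by_{t+1},\bz_t)-d(\by_t,\bz_t)] + [d(\by_{t+1},\bz_{t+1})-d(\by_{t+1},\bz_t)]$. For the first bracket (only $\by$ varies, $\bz=\bz_t$ fixed), the lower comparison with $\bu(\theta')=\bx^*(\by_{t+1},\bz_t)$ together with the fact that $L_\rho(\bx,\cdot)$ is affine in $\by$ through $\langle A\bx-\bb,\cdot\rangle$ yields exactly $\langle A\bx^*(\by_{t+1},\bz_t)-\bb,\ \by_{t+1}-\by_t\rangle = \eta\langle A\bx_t-\bb,\ A\bx^*(\by_{t+1},\bz_t)-\bb\rangle$, the first term. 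For the second bracket (only $\bz$ varies, $\by=\by_{t+1}$ fixed), the lower comparison with $\bx^*(\by_{t+1},\bz_{t+1})$ leaves only $\tfrac{\mu}{2}\big(\|\bx^*(\by_{t+1},\bz_{t+1})-\bz_{t+1}\|^2 - \|\bx^*(\by_{t+1},\bz_{t+1})-\bz_t\|^2\big)$, which I expand via $\|a-b\|^2-\|a-c\|^2=\langle c-b,\ 2a-b-c\rangle$ to obtain the stated $\tfrac{\mu}{2}\langle \bz_{t+1}-\bz_t,\ \bz_{t+1}+\bz_t-2\bx^*(\by_{t+1},\bz_{t+1})\rangle$.

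For the upper bound on $\Psi$, I would apply the upper comparison directly, evaluating the $\bz_{t+1}$-problem at the old minimizer $\bar{\bx}^*(\bz_t)$, which is feasible since $\{\bx\in X: A\bx=\bb\}$ does not depend on $\bz$:
\[
\Psi(\bz_{t+1}) - \Psi(\bz_t) \;\le\; \tfrac{\mu}{2}\big(\|\bar{\bx}^*(\bz_t)-\bz_{t+1}\|^2 - \|\bar{\bx}^*(\bz_t)-\bz_t\|^2\big).
\]
Expanding the squared-norm difference as above produces exactly $\mu\langle \bz_{t+1}-\bz_t,\ \bz_t-\bar{\bx}^*(\bz_t)\rangle + \tfrac{\mu}{2}\|\bz_t-\bz_{t+1}\|^2$, and since $\sigma_4=(\mu-L_f)/\mu\in(0,1)$ the quadratic coefficient relaxes from $\tfrac{\mu}{2}$ to $\tfrac{\mu}{2\sigma_4}$, giving the claim. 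Equivalently, the first-order term is $\langle\nabla\Psi(\bz_t),\bz_{t+1}-\bz_t\rangle$ with $\nabla\Psi(\bz_t)=\mu(\bz_t-\bar{\bx}^*(\bz_t))$, and the remainder is the descent-lemma term for the Moreau envelope $\Psi$, whose $\tfrac{\mu}{\sigma_4}$-smoothness follows from the $(\mu-L_f)$-strong convexity of its defining problem.

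I do not expect a genuine obstacle here, since everything reduces to the comparison principle and well-definedness of the minimizers; the only care required is bookkeeping—choosing the correct reference minimizer in each step (the \emph{new} minimizer $\bx^*(\by_{t+1},\cdot)$ for the lower bounds on $d$, the \emph{old} minimizer $\bar{\bx}^*(\bz_t)$ for the upper bound on $\Psi$) so the inner products land on precisely the points named in the statement, and routing the $d$-increment through $(\by_{t+1},\bz_t)$. As the excerpt notes, this reproduces the argument of \cite{zhang2020proximal}, because the increments enter only through $\by_{t+1}-\by_t=\eta(A\bx_t-\bb)$ and the shift $\bz_{t+1}-\bz_t$ of the proximal center.
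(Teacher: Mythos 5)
Your proposal is correct and follows exactly the argument the paper intends: the paper omits the proof and defers to \citep{zhang2020proximal}, but the appendix (proof of Lemma~\ref{stochasitc d(y,z)}) starts from precisely the two-sided value-function comparison you use, routing the increment of $d$ through $(\by_{t+1},\bz_t)$ with the new minimizers and evaluating the $\Psi(\bz_{t+1})$-problem at the old minimizer $\bar{\bx}^*(\bz_t)$. Your direct expansion even yields the quadratic coefficient $\tfrac{\mu}{2}\leq\tfrac{\mu}{2\sigma_4}$ for the $\Psi$ bound, so the stated inequality follows a fortiori.
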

We continue with the main \emph{descent}-type inequality on the potential function after one iteration of the algorithm. The proof of this lemma is rather intricate and requires a careful combination of the inner products coming from the previous lemmas, and using the particular update of the proximal center $\bz_{t+1}$ as well as parameter selections. Let us recall that $\bu^*(\bx, \by, \bz)$ and $\bx^*(\by, \bz)$ that appear in the lemma statement are defined in \eqref{eq: aeh4} and \eqref{eq: x_def}.
\begin{lemma} (cf.  \Cref{lem: appendix_descent}) \label{lem: vt_ineq}
Under Assumption \ref{asmp: 1}, with the parameters selected as \eqref{eq: alg_params} (see also \eqref{eq: param_defs_appendix}), the iterates of Algorithm \ref{alg: sgd} satisfy the inequality
    \begin{align} \label{desent lemma}
        \mathbb{E}V_t-\mathbb{E}V_{t+1}&\geq c_\beta\mathbb{E}\|\bz_{t+1}-\bz_t\|^2 -\lambda \tau^2\sigma^2 +c_\tau\mathbb{E}\|\bu^*(\bx_t,\by_{t+1},\bz_{t})-\bx_t\|^2 +c_\eta\mathbb{E}\|A\bx^*(\by_{t+1},\bz_t)-\bb\|^2,
    \end{align}
    where $c_\tau = \Theta(1/\sqrt{T})$, $c_\eta = \Theta(1/\sqrt{T})$, $c_\beta = \Theta(1/\sqrt{T})$ with their precise definitions given in \Cref{lem: appendix_descent}.
\end{lemma}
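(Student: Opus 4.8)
The plan is to derive the one-step inequality by assembling the four component results into the potential $V_t = \varphi_{1/\lambda}(\bx_t,\by_t,\bz_t) - 2d(\by_t,\bz_t) + 2\Psi(\bz_t)$ and carefully tracking signs. First I would chain the two $\varphi_{1/\lambda}$ estimates: the descent lemma (\Cref{lem: swo3}) controls the transition $\bx_t \to \bx_{t+1}$, and \Cref{lem: phi_misc} controls the transition $(\by_t,\bz_t)\to(\by_{t+1},\bz_{t+1})$, so together they upper bound $\mathbb{E}[\varphi_{1/\lambda}(\bx_{t+1},\by_{t+1},\bz_{t+1}) - \varphi_{1/\lambda}(\bx_t,\by_t,\bz_t)]$. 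Negating this and adding $2\mathbb{E}[d(\by_{t+1},\bz_{t+1}) - d(\by_t,\bz_t)]$ and $-2\mathbb{E}[\Psi(\bz_{t+1}) - \Psi(\bz_t)]$, both estimated by \Cref{lem: swo4}, yields a lower bound on $\mathbb{E}[V_t - V_{t+1}]$ consisting of: the clean term $\tfrac{\tau\lambda^2}{16}\mathbb{E}\|\bu^*(\bx_t,\by_{t+1},\bz_t)-\bx_t\|^2$; the noise term $-\lambda\tau^2\sigma^2$; error terms proportional to $\mathbb{E}\|\bz_{t+1}-\bz_t\|^2$ (with coefficient $C_z := \lambda\tau\mu + 2\lambda\tau^2\mu^2 + \tau\lambda^2\mu^2/(8\gamma_s^2)$ from \Cref{lem: swo3} and $\mu/\sigma_4$ from the $\Psi$ bound); and two families of inner products — the \emph{dual} inner products, all carrying the factor $\by_{t+1}-\by_t = \eta(A\bx_t-\bb)$, and the \emph{proximal-center} inner products, all carrying the factor $\bz_{t+1}-\bz_t = \beta(\bx_t-\bz_t)$.

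The next step is to convert the dual inner products into the target term $c_\eta\mathbb{E}\|A\bx^*(\by_{t+1},\bz_t)-\bb\|^2$. After substituting $\by_{t+1}-\by_t = \eta(A\bx_t-\bb)$, these take the form $\eta\langle A\bx_t-\bb,\; A\bu^*(\bx_t,\by_t,\bz_t)-\bb\rangle + 2\eta\langle A\bx_t-\bb,\; A\bx^*(\by_{t+1},\bz_t)-\bb\rangle$. I would use two elementary proximal-point estimates, both consequences of the strong convexity of the inner subproblems: first, that $\bu^*(\bx_t,\by_{t+1},\bz_t)$ and $\bx^*(\by_{t+1},\bz_t)$ differ only by the extra $\tfrac{\lambda}{2}\|\cdot-\bx_t\|^2$ regularizer, giving $\|\bu^*(\bx_t,\by_{t+1},\bz_t) - \bx^*(\by_{t+1},\bz_t)\| = O(\|\bu^*(\bx_t,\by_{t+1},\bz_t)-\bx_t\|)$; and second, that changing the dual from $\by_t$ to $\by_{t+1}$ moves the minimizer by $O(\eta\|A\bx_t-\bb\|)$. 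Writing $A\bx_t-\bb = (A\bx^*(\by_{t+1},\bz_t)-\bb) + A(\bx_t-\bx^*(\by_{t+1},\bz_t))$ and completing the square then produces a positive multiple of $\|A\bx^*(\by_{t+1},\bz_t)-\bb\|^2$, while the residual cross terms are bounded by Young's inequality in terms of $\|\bu^*(\bx_t,\by_{t+1},\bz_t)-\bx_t\|^2$ (absorbed into $c_\tau$) and higher-order $O(\eta^2)$ contributions.

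I would then handle the proximal-center inner products. Collecting the $\bz$-terms from the three bounds and substituting $\bz_{t+1}-\bz_t = \beta(\bx_t-\bz_t)$, the decisive contribution is the $\Psi$-descent term $-2\mu\langle\bz_{t+1}-\bz_t,\; \bz_t-\bar{\bx}^*(\bz_t)\rangle$: since $\bx_t$ is close to the constrained proximal point $\bar{\bx}^*(\bz_t)$ up to errors governed by $\|\bu^*(\bx_t,\by_{t+1},\bz_t)-\bx_t\|$ and $\|A\bx^*(\by_{t+1},\bz_t)-\bb\|$, this inner product is, up to those controllable errors, a positive multiple of $\|\bz_{t+1}-\bz_t\|^2$. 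This positive term must dominate the accumulated error $(C_z + \mu/\sigma_4)\mathbb{E}\|\bz_{t+1}-\bz_t\|^2$ together with the leftover pieces $\mu\langle\bz_{t+1}-\bz_t,\bz_t-\bu^*(\bx_t,\by_{t+1},\bz_t)\rangle$ and $2\mu\langle\bz_{t+1}-\bz_t, \bar{\bx}^*(\bz_t)-\bx^*(\by_{t+1},\bz_{t+1})\rangle$, which again reduce to the same three quadratics. Finally, inserting the scalings $\tau,\eta,\beta \asymp T^{-1/2}$, $\mu\asymp L_f$ and $\lambda\asymp L_f+\mu(\|A\|^2+1)$ from \eqref{eq: alg_params}, every genuinely useful term is $\Theta(T^{-1/2})$ while every cross or error term is $O(T^{-1})$ or already absorbed, so for $T$ large enough the three coefficients survive as $c_\tau,c_\eta,c_\beta = \Theta(T^{-1/2}) > 0$.

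The hard part will be the simultaneous bookkeeping of the inner products: none of the three target quadratics is produced cleanly by any single bound, and the cross terms arising from the several distinct proximal points $\bu^*(\bx_t,\by_t,\bz_t)$, $\bu^*(\bx_t,\by_{t+1},\bz_t)$, $\bx^*(\by_{t+1},\bz_t)$, $\bx^*(\by_{t+1},\bz_{t+1})$ and $\bar{\bx}^*(\bz_t)$ must all be re-expressed in terms of $\|\bu^*(\bx_t,\by_{t+1},\bz_t)-\bx_t\|^2$, $\|A\bx^*(\by_{t+1},\bz_t)-\bb\|^2$ and $\|\bz_{t+1}-\bz_t\|^2$ before the signs can be read off. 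Getting the constants to balance — so that the leading $\Theta(T^{-1/2})$ terms are not consumed by the Young's-inequality residuals — is precisely the delicate step that forces the specific choices of $\mu$, $\lambda$ and the common $T^{-1/2}$ scaling of $\tau,\eta,\beta$.
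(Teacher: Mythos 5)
Your proposal follows essentially the same route as the paper's proof of \Cref{lem: appendix_descent}: combine \Cref{lem: swo3}, \Cref{lem: phi_misc} and \Cref{lem: swo4}, substitute the update rules $\by_{t+1}-\by_t=\eta(A\bx_t-\bb)$ and $\bx_t-\bz_t=\beta^{-1}(\bz_{t+1}-\bz_t)$, complete the square to extract $\eta\|A\bx^*(\by_{t+1},\bz_t)-\bb\|^2$, and absorb all cross terms via Young's inequality together with the Lipschitz properties of the proximal maps and the global error bound of \Cref{global error}. The only (cosmetic) difference is bookkeeping: the paper extracts the dominant $\Theta(\mu/\beta)\|\bz_{t+1}-\bz_t\|^2$ term from the $\varphi_{1/\lambda}$ inner product $\tfrac{\mu}{2}\langle \bz_{t+1}-\bz_t, 2\bx_t-\bz_t-\bz_{t+1}\rangle$ and cancels the $\beta^{-1}$-order pieces of the $d$- and $\Psi$-terms against each other exactly, whereas you credit the $\Psi$-descent term, which yields the same leading order after the same substitution.
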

One novelty in our analysis is to show that this potential function is still lower bounded and decreases, in expectation, up to an error term depends on $\tau^2$ and the variance. To integrate this change into the framework of \cite{zhang2022error} under reasonable assumptions on the stochastic oracle as mentioned earlier in Section \ref{sec: alg}, we also slightly changed the definition of $\bz_{t+1}$ in the algorithm, due to technical reasons. In particular, in our case, we lose the control over $\|\bx_{t+1} - \bx_t\|^2$ (since we do not assume bounded domains in this section), whereas the deterministic analysis of \cite{zhang2022error} have a natural control over such terms.

The other change is the error coming from the variance of the stochastic gradients of $f$. This error causes the complexity to deteriorate compared to the deterministic case, but this is an effect that is common with algorithms based on SGD. In particular, with a correctly selected step size, we still obtain the same-order sample complexity as SGD, which is optimal even for unconstrained smooth nonconvex optimization \citep{arjevani2023lower}.

\subsubsection{Complexity analysis}
After \Cref{lem: vt_ineq}, it is straightforward to obtain
\begin{align*}
\mathbb{E}\|\bz_{t+1} - \bz_t\|^2 &\leq \varepsilon^2, \\
\mathbb{E}\|A\bx^*(\by_{t+1}, \bz_t) - \bb\|^2 &\leq \varepsilon^2, \\
\mathbb{E}\|\bu^*(\bx_t, \by_{t+1}, \bz_t) - \bx_t\|^2 &\leq \varepsilon^2,
\end{align*}
when $T=\Theta(\varepsilon^{-4})$.
Then, by tedious but straightforward calculations, we can directly get the bound on the norm of the gradient of the Moreau envelope, $\nabla \Psi(\bz_t)$, obtaining near-stationarity. The details for these estimations appear in \Cref{appendix: complexity_moreau}.

There are a couple more steps to go from this result to obtaining $\varepsilon$-stationary points, but the idea is simple. Since we know that small norm of $\nabla \Psi(\bz_t)$ means that we are near a stationary point, we can perform one more iteration of SGD with a batch size depending on $\varepsilon^{-2}$ to get an $\varepsilon$ stationary point, without changing the dominant term in the complexity. The details are given in \Cref{subsec: postprocess}.

\section{Extension to Random Linear Constraints}\label{sec: rand_sec}

We turn to the case when constraints are sampled, that is, we do not have access to the full matrix $A$, or vector $\bb$ but only to unbiased samples of them. This is a suitable setting, when, for example, we have a large matrix $A$. In particular, we have $A=\mathbb{E}_{\zeta \sim P}[A_\zeta], \bb=\mathbb{E}_{\zeta \sim P}[\bb_\zeta]$ and use
$
A_\zeta, \bb_\zeta
$ in the algorithm. We rewrite the template for convenience, as

\begin{equation}\label{eq: prob_stoc}
\min_{\bx\in X} f(\bx) \text{~subject to~} \mathbb{E}_{\zeta\sim P}[A_\zeta \bx - \bb_\zeta] = 0,
\end{equation}
where $f(\bx)=\mathbb{E}_{\xi\sim\Xi}[f(\bx, \xi)]$.
In this case, to get an unbiased stochastic gradient for the proximal augmented Lagrangian, we need to sample two i.i.d. samples of $\zeta$ and compute
\begin{equation}\label{eq: def_g_stoc}
\begin{aligned}
G(\bx, \by, \bz, \xi) &= f(\bx, \xi) + A_{\zeta^1}^\top \by + A_{\zeta^1}^\top(A_{\zeta^2}\bx - \bb_{\zeta^2}).
\end{aligned}
\end{equation}
An immediate issue that arises is that the variance of the stochastic gradients of the proximal AL function now scale as $\bx$ and $\by$. As such, assuming bounded variance would require assuming bounded dual variables, which is a strong assumption that is not satisfied in practice. To go around this difficulty, we have two adjustments, \emph{(i)} we need to assume a constraint qualification (CQ) condition and compactness of $X$ and \emph{(ii)} we include a safeguarding procedure in the algorithm to monitor when the dual variable gets too large. We will show that under these two modifications, we can obtain the same complexity guarantees as our previous setting with deterministic linear constraints.

\begin{algorithm*}[h]
    \caption{Stochastic smoothed and linearized ALM for stochastic constraints with dual safeguarding}
    \label{alg:example stochasitc}
 \begin{algorithmic}
    \STATE {\bfseries Input:} $M_y>\frac{M_V-M_\Psi+2M}{r}$ (check also \Cref{rem: my})
    \STATE {\bfseries Initialize:} $\bx_0=\bz_0 \in X$, $\by_0\in\mathbb{R}^m$, $\rho \geq 0$.
    \FOR{$t=0$ {\bfseries to} $T-1$}
    \STATE $\by_{t+1} = \by_t + \eta(A_{\zeta_t}\bx_t - \bb_{\zeta_t})$ where $\zeta_t\sim P$ is generated i.i.d. 
    \IF{$\|\by_{t+1}\| \geq M_y$}
    \STATE $\by_{t+1} =0$
    \ENDIF 
    \STATE Sample $\xi_t \sim \Xi$ i.i.d. and generate $\mathbb{E}_{\xi_t}[G(\bx_t, \by_{t+1}, \bz_t, \xi_t)] = \nabla_\bx L_{\rho}(\bx_t, \by_{t+1}) + \mu(\bx_t-\bz_t)$ as in \eqref{eq: def_g_stoc}
    \STATE $\bx_{t+1} = \proj_X(\bx_t - \tau G(\bx_t, \by_{t+1}, \bz_t, \xi_t))$   
    \STATE $\bz_{t+1} = \bz_t + \beta(\bx_t-\bz_t)$
    \ENDFOR
 \end{algorithmic}
 \label{alg: sgd2}
 \end{algorithm*}

\begin{remark}\label{rem: my}
 We give the choice of $M_y$ as follows. Let $M_V = \max_{\bx,\bz \in X} \{K(\bx,0,\bz) - 2d(0,\bz) + 2\Psi(\bz)\}$, $M = \max_{\bx,\bz \in X} \{|f(\bx)| + \frac{\mu}{2}\|\bx-\bz\|^2 + \frac{\rho}{2}\|A\bx-\bb\|^2\}$, where $K$ is defined in \eqref{eq: k_def_main} and $M_\Psi$ is a uniform lower bound of $\Psi(\bz_t)$, for example, $\underline{f}$.
 According to Assumption \ref{assumptions for stochasitc}, there exists a positive $r>0$ such that for any direction $\mathbf{d}\in \text{Range}(A)$, we can find a $\bx\in X$ satisfying $\|A\bx-\bb\|=r$ and $A\bx-\bb$ has the same direction as $\mathbf{d}$. Then, we choose $M_y$ as
 $$M_y>\frac{M_V-M_\Psi+2M}{r}.$$
 \end{remark}

\begin{assumption} \label{assumptions for stochasitc}
For the problem given in \eqref{eq: prob_stoc}, the following holds:
\begin{enumerate}
\item    The feasible set $\{\bx:\bx\in X, A\bx=\bb\}$ is bounded.
\item The origin is in the relative interior of the set $\{A\bx-\bb\colon \bx\in X\}$
\item $A$ has full row-rank.
    \end{enumerate}
\end{assumption}
Here, in addition to the assumptions in the earlier setting, we require a Slater's condition as well as compact domains to ensure boundedness of the dual variable. Slater's condition is a classical CQ, see for example \cite{bertsekas2003convex}.

In this setting, we only state our theorem for the near-stationarity. The $\varepsilon$-stationarity follows in the same way as the previous section by a post-processing step.
\begin{theorem} \label{thm:stochasitc linear constraints}
Let Assumptions \ref{asmp: 1} and \ref{assumptions for stochasitc} hold and run Algorithm \ref{alg: sgd2} with the parameters given in \eqref{eq: alg_params} (also \eqref{eq: param_defs_appendix}). We have that $\mathbb{E}\| \nabla \Psi(\bz_{t^*})\|\leq\varepsilon$ where $t^*$ is randomly selected from $\{1, \dots, T\}$ with $T=\Omega(\varepsilon^{-4})$. The stochastic oracle complexity is $O(\varepsilon^{-4})$.
\end{theorem}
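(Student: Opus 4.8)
The plan is to reuse the potential-function descent scheme behind \Cref{th: moreau_env_det}, with the same potential $V_t=\varphi_{1/\lambda}(\bx_t,\by_t,\bz_t)-2d(\by_t,\bz_t)+2\Psi(\bz_t)$, and to adapt it to the two features that distinguish Algorithm \ref{alg: sgd2}: the dual update is now driven by a random sample of the constraint, and $\by_{t+1}$ is occasionally reset to $0$ by the safeguard. I would split the indices into \emph{non-reset} steps, on which $\by_{t+1}=\by_t+\eta(A_{\zeta_t}\bx_t-\bb_{\zeta_t})$, and \emph{reset} steps, on which $\|\by_t+\eta(A_{\zeta_t}\bx_t-\bb_{\zeta_t})\|\geq M_y$ triggers $\by_{t+1}=0$, and handle the two classes separately before telescoping over all $T$ iterations.

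On non-reset steps, the first task is to recover a bounded-variance regime, since the variance of $G$ in \eqref{eq: def_g_stoc} scales with $\bx$ and $\by$, so \eqref{eq: stoc_oracle} no longer holds a priori. Here I would use the compactness in \Cref{assumptions for stochasitc} together with the safeguard guarantee $\|\by_t\|\leq M_y$ to bound uniformly both the variance of $G$ and that of the dual direction $A_{\zeta_t}\bx_t-\bb_{\zeta_t}$, yielding an effective variance proxy $\sigma^2=\sigma^2(M_y,\operatorname{diam}X)$. With this in hand I would re-derive the analog of \Cref{lem: vt_ineq}. The only genuinely new ingredient is that the inner products in \Cref{lem: swo4} are now formed with the unbiased sample $A_{\zeta_t}\bx_t-\bb_{\zeta_t}$, while the accompanying point $\bx^*(\by_{t+1},\bz_t)$ itself depends on $\zeta_t$. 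I would decouple this correlation by comparing with the deterministic-constraint update $\by_t+\eta(A\bx_t-\bb)$ and using Lipschitzness of $\bx^*(\cdot,\bz_t)$ (from strong convexity), so that the conditional expectation over $\zeta_t$ reproduces the deterministic inner products up to an $O(\eta^2)$ error, which I absorb next to the existing $\lambda\tau^2\sigma^2$ term since $\eta\asymp\tau\asymp 1/\sqrt{T}$.

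On reset steps the key claim is that a reset never increases the potential, $V_{t+1}\leq V_t$. For the upper bound, after a reset $\by_{t+1}=0$, and since $\varphi_{1/\lambda}(\bx,\by,\bz)\leq K(\bx,\by,\bz)$ by taking $\bu=\bx$ in \eqref{eq: varphi_def}, the definition of $M_V$ in \Cref{rem: my} gives $V_{t+1}\leq M_V$. For the lower bound, I would invoke the Slater-type condition of \Cref{assumptions for stochasitc}: evaluating the minimization defining $d(\by_t,\bz_t)$ at a point $\bx\in X$ with $A\bx-\bb$ of norm $r$ pointing opposite to $\by_t$ yields $d(\by_t,\bz_t)\leq M-r\|\by_t\|$, hence, using $\varphi_{1/\lambda}\geq d$ and $\Psi\geq M_\Psi$,
\[
V_t\geq -d(\by_t,\bz_t)+2\Psi(\bz_t)\geq r\|\by_t\|-M+2M_\Psi,
\]
which grows linearly in $\|\by_t\|$. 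A reset fires only when $\|\by_t\|$ is within $O(\eta)$ of $M_y$, so the calibration $M_y>\tfrac{M_V-M_\Psi+2M}{r}$ in \Cref{rem: my} forces $V_t\geq M_V\geq V_{t+1}$. I expect this calibration, i.e.\ tying $M_y$ to the Slater radius $r$ so the potential provably exceeds $M_V$ exactly when a reset is about to occur, to be the main obstacle, since it is what makes the safeguard compatible with the descent scheme; the secondary technical point is the correlation decoupling described above.

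Finally I would telescope over all $T$ iterations: reset steps contribute $V_t-V_{t+1}\geq 0$, non-reset steps contribute the squared quantities $\|\bz_{t+1}-\bz_t\|^2$, $\|A\bx^*(\by_{t+1},\bz_t)-\bb\|^2$, $\|\bu^*(\bx_t,\by_{t+1},\bz_t)-\bx_t\|^2$ minus the $O(\tau^2+\eta^2)$ error, and the same computation shows $V_t$ is bounded below. This gives an $\varepsilon^2$ bound on the average of the three quantities once $T=\Theta(\varepsilon^{-4})$. Two bookkeeping points remain: between consecutive resets $\|\by\|$ can grow by only $O(\eta)$ per step, so the number of resets is $O(\sqrt{T})=o(T)$ and, by compactness, their bounded contribution to the uniform average over $t^*\in\{1,\dots,T\}$ is $O(1/\sqrt{T})=O(\varepsilon^2)$; and the passage from the three averaged quantities to $\mathbb{E}\|\nabla\Psi(\bz_{t^*})\|\leq\varepsilon$ proceeds exactly as in \Cref{appendix: complexity_moreau}.
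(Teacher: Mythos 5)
Your proposal is correct and follows essentially the same route as the paper: bounding the variance via compactness of $X$ and the safeguard $\|\by_t\|\le M_y$, decoupling the $\zeta_t$-correlation in the dual inner product by comparing against $\bx^*(\by_t,\bz_t)$ and using Lipschitzness (the paper's Lemma~\ref{stochasitc d(y,z)}), and using the Slater-based linear growth of the potential in $\|\by\|$ (Proposition~\ref{prop: y_bdd}) to show the reset cannot hurt. The only (harmless) difference is bookkeeping on reset steps: the paper shows the full descent inequality still holds there because the reset only decreases $V_{t+1}$ below its un-reset value, whereas you discard those steps and bound their number by $O(\sqrt{T})$ separately; both yield the stated $O(\varepsilon^{-4})$ complexity.
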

For the proof of this theorem, we refer to \Cref{sec:appendix_stoc_const}.

As mentioned earlier, the optimal sample complexity for nonconvex optimization  with Lipschitz $\nabla f$ is $O(\varepsilon^{-4})$ \citep{arjevani2023lower}. Our result achieves this optimal complexity while handling linear constraints with random sampling.

\section{Extension with Variance Reduction}\label{sec: var_red}
\begin{algorithm}[ht!] 
    \caption{Stochastic smoothed and linearized ALM with STORM}
    \label{alg: alm-storm}
    \begin{algorithmic}
    \STATE Initialize: $x_0 = z_0 \in X, y_0 \in \mathbb{R}^m$, $\widehat{\nabla} f_0 = \frac{1}{N}\sum_{i=1}^N \nabla f(x_0, \zeta_i)$, $N=T^{1/6}$ and $\rho \geq 0$
    \FOR{$t = 0$ to $T-1$}
        \STATE $\by_{t+1} = \by_t + \eta (A\bx_t - \bb)$
        \STATE $G(\bx_t, \by_{t+1}, \bz_t) = \widehat{\nabla}f_t + A^\top\by+A^{\top} (A\bx_t - \bb) + \lambda (\bx_t - \bz_t)$
        \STATE $\bx_{t+1} = \text{proj}_X(\bx_t - \tau G(\bx_t, \by_{t+1}, \bz_t))$
        \STATE $\bz_{t+1} = \bz_t + \beta (\bx_t - \bz_t)$
        \STATE Sample $\xi_{t+1} \sim \Xi$ i.i.d. and set $\widehat{\nabla}f_{t+1} = \nabla f(\bx_{t+1}, \xi_{t+1}) + (1 - \alpha)(\widehat{\nabla}f_t  - \nabla f(\bx_t, \xi_{t+1}))$
    \ENDFOR
    \end{algorithmic}
    \end{algorithm}

We now introduce the STORM variance reduction technique from \cite{cutkosky2019momentum} into our algorithm, which improves the iteration and oracle complexity from $O(\varepsilon^{-4})$ to $O(\varepsilon^{-3})$ under a stronger assumption on the oracle, compared to our earlier sections.
With this variant, we reduce the variance of the stochastic gradients of the objective function, which leads to a faster convergence rate, and, also a simpler analysis that does not rely on the Moreau envelope $\varphi_{1/\lambda}$.

The ALM-STORM algorithm is given in Algorithm \ref{alg: alm-storm}. The main difference between ALM-STORM and the original algorithm is the update of the stochastic gradient estimate $\widehat{\nabla} f_t$. In STORM, the stochastic gradient estimate is updated using the previous stochastic gradient estimate, to reduce the variance of the stochastic gradients. The update of $\hat{\nabla} f_t$ is given by
\begin{equation}
    \widehat{\nabla} f_{t+1} = \nabla f(\bx_{t+1}, \xi_{t+1}) + (1 - \alpha)(\widehat{\nabla}f_t  -  \nabla f(\bx_t, \xi_t)),
\end{equation}
where $\alpha \in (0, 1)$ is a parameter to be determined. 

The update of $\widehat{\nabla}f_t $ is a linear combination of the current stochastic gradient estimate, the previous stochastic gradient and a correction term involving $\nabla f(\bx_{t+1}, \xi_t)$ and $\nabla f(\bx_t, \xi_t)$. It is easy to see that when $\alpha = 0$, Algorithm \ref{alg: alm-storm} reduces to Algorithm \ref{alg: sgd}, but we will see that a particular choice of $\alpha$ will help us obtain a better complexity under Assumption \ref{assumption: variance_reduction}, which is stronger than the oracle access and smoothness required in Assumption \ref{asmp: 1}.
\begin{remark}
We only use a minibatch in the initialization, which does not affect the overall complexity. The minibatch size is $N = T^{1/6}$, which is small compared to the total number of iterations $T$. The iterations of our algorithm only require $2$ stochastic gradients, $\nabla f(\bx_t, \xi_{t+1})$ and $\nabla f(\bx_{t+1}, \xi_{t+1})$.
\end{remark}

For the analysis of ALM-STORM, we introduce the new assumption mentioned above. This is used, for example, in \cite{arjevani2023lower}. In particular, \cite{arjevani2023lower} showed that the oracle complexity $O(\varepsilon^{-3})$ is tight under Assumption \ref{assumption: variance_reduction} even with no constraints.
\begin{assumption}\label{assumption: variance_reduction}
For a given $\xi\sim \Xi$, we can query $\nabla f(\bx, \xi)$ and $\nabla f(\by, \xi)$ for different points $\bx, \by$. There exists a constant $L_0 > 0$ such that for all $\bx, \by \in X$, we have
   \begin{equation*}
   \mathbb{E}_{\xi\sim \Xi}\|\nabla f(\bx,\xi)-\nabla f(\by,\xi)\|^2\leq L_0^2\|\bx-\by\|^2.
   \end{equation*} 
   We also have access to a stochastic gradient of $f$ that satisfies \eqref{eq: stoc_oracle}.
\end{assumption}

The proof of the following lemma, taken from \cite{cutkosky2019momentum}, is given in Appendix \ref{appendix: storm_proof} for completeness.
\begin{lemma}(from \cite{cutkosky2019momentum})\label{lemma: variance_reduction}
Let Assumption \ref{assumption: variance_reduction} hold. We have the estimation of the variance as:
$$\mathbb{E}\|\widehat{\nabla}f_{t+1}-\nabla f(\bx_{t+1})\|^2\leq (1-\alpha)^2\mathbb{E}\|\widehat{\nabla}f_t -\nabla f(\bx_t)\|^2+3(L_0^2+L_f^2)\mathbb{E}\|\bx_{t+1}-\bx_t\|^2+3\alpha^2\sigma^2.$$
\end{lemma}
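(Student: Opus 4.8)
The plan is to turn the STORM update into a one-step recursion for the error vector $\be_t := \widehat{\nabla}f_t - \nabla f(\bx_t)$ and to exploit the ordering in Algorithm \ref{alg: alm-storm}: the iterate $\bx_{t+1}$ is formed using $\widehat{\nabla}f_t$ and is therefore measurable with respect to the filtration $\mathcal{F}_t := \sigma(\xi_1,\dots,\xi_t)$ \emph{before} the fresh sample $\xi_{t+1}$ is drawn. Substituting the definition of $\widehat{\nabla}f_{t+1}$ and regrouping, I would write
\[
\be_{t+1} = (1-\alpha)\be_t + \bw_{t+1},
\]
where
\[
\bw_{t+1} = \alpha\big(\nabla f(\bx_{t+1},\xi_{t+1}) - \nabla f(\bx_{t+1})\big) + (1-\alpha)\big(\nabla f(\bx_{t+1},\xi_{t+1}) - \nabla f(\bx_t,\xi_{t+1})\big) - (1-\alpha)\big(\nabla f(\bx_{t+1}) - \nabla f(\bx_t)\big).
\]
A direct expansion confirms this identity. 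The crucial point is that, since $\bx_t$ and $\bx_{t+1}$ are $\mathcal{F}_t$-measurable and the oracle is unbiased (both gradient evaluations reuse the \emph{same} sample $\xi_{t+1}$), each of the three brackets has conditional mean zero, so $\mathbb{E}[\bw_{t+1}\mid\mathcal{F}_t]=0$.

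Since $\be_t$ is $\mathcal{F}_t$-measurable, the cross term vanishes and
\[
\mathbb{E}\big[\|\be_{t+1}\|^2 \mid \mathcal{F}_t\big] = (1-\alpha)^2\|\be_t\|^2 + \mathbb{E}\big[\|\bw_{t+1}\|^2\mid\mathcal{F}_t\big].
\]
Next I would bound the noise term by the elementary inequality $\|u+v+w\|^2\le 3(\|u\|^2+\|v\|^2+\|w\|^2)$ applied to the three summands of $\bw_{t+1}$. The first resulting term is controlled by $3\alpha^2\sigma^2$ using the variance bound in \eqref{eq: stoc_oracle}; the second by $3(1-\alpha)^2 L_0^2\|\bx_{t+1}-\bx_t\|^2$ using the expected-smoothness part of Assumption \ref{assumption: variance_reduction}; and the third, which is deterministic given $\mathcal{F}_t$, by $3(1-\alpha)^2 L_f^2\|\bx_{t+1}-\bx_t\|^2$ using $L_f$-smoothness of $f$. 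Bounding $(1-\alpha)^2\le 1$ and taking total expectations then yields exactly the stated inequality.

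The only genuinely delicate step is the conditional-unbiasedness claim $\mathbb{E}[\bw_{t+1}\mid\mathcal{F}_t]=0$, which underlies the contraction factor $(1-\alpha)^2$ on the previous error. It hinges on two algorithmic features that must be invoked explicitly: that $\bx_{t+1}$ is computed from $\widehat{\nabla}f_t$ and hence is independent of $\xi_{t+1}$, and that the \emph{same} sample $\xi_{t+1}$ appears in both $\nabla f(\bx_{t+1},\xi_{t+1})$ and $\nabla f(\bx_t,\xi_{t+1})$, so that the difference bracket has conditional mean $\nabla f(\bx_{t+1})-\nabla f(\bx_t)$, precisely what is subtracted. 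In particular the correction term in the update must be read as $\nabla f(\bx_t,\xi_{t+1})$, not $\nabla f(\bx_t,\xi_t)$; with a stale sample the cancellation fails and the variance-reduction effect is lost. Everything past this point is a routine application of Young's inequality together with the two smoothness estimates.
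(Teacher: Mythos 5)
Your proof is correct and follows essentially the same route as the paper's: write the error recursion $\be_{t+1}=(1-\alpha)\be_t+\bw_{t+1}$ with $\bw_{t+1}$ conditionally mean-zero, drop the cross term, and bound $\mathbb{E}\|\bw_{t+1}\|^2$ by the three-term Young's inequality using the expected smoothness, the $L_f$-smoothness of $f$, and the variance bound (the paper merely groups the three summands slightly differently, attaching the $\alpha^2\sigma^2$ piece to the noise at $\bx_t$ rather than $\bx_{t+1}$). Your observation that the correction term must reuse the fresh sample $\xi_{t+1}$ is exactly right and matches Algorithm \ref{alg: alm-storm} and the appendix proof.
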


We introduce a different potential function \( \bar{V}_t \) for the ALM-STORM algorithm compared to Sections  \ref{sec: main_conv} and \ref{sec: rand_sec}. The potential function we use in this section is similar to the one defined in \cite{zhang2022error}, with the exception of the last term that helps us control the error coming from the variance. In particular, we have
\begin{equation}\label{eq: barv_def}
    \bar{V}_t = K(\bx_t,\by_t,\bz_t) - 2d(\by_t, \bz_t) + 2\Psi(\bz_t)+\frac{1}{48(L_0^2+L_f^2)\tau}\mathbb{E}\|\widehat{\nabla}f_t -\nabla f(\bx_{t})\|^2,
\end{equation}
where 
\begin{equation}\label{eq: k_def_main}
K(\bx, \by, \bz) = L_\rho(\bx, \by) + \frac{\mu}{2}\|\bx-\bz\|^2
\end{equation} 
and $\bx\mapsto K(\bx, \by, \bz)$ is $L_K$-smooth with $L_K=L_f+\rho \|A\|+\mu$.

We first establish the descent-type lemma of this potential function, which is the key step in the analysis of the ALM-STORM algorithm. Compared to the deterministic settings as in \cite{zhang2022error}, we have the extra error due to using $\widehat{\nabla} f_t$ instead of the full gradient $\nabla f(\bx_t)$.
\begin{lemma}\label{lemma: descent_alm_storm}
Let \Cref{asmp: 1} hold.    For the iterates generated by Algorithm \ref{alg: alm-storm}, we have
    \begin{equation*}
        K(\bx_{t+1}, \by_{t+1}, \bz_t) - K(\bx_t, \by_{t+1}, \bz_t) \leq \frac{\tau}{2} \| \nabla f(\bx_t) - \widehat{\nabla}f_t  \|^2 - (\frac{1}{2\tau}-\frac{L_K}{2}) \| \bx_{t+1} - \bx_t \|^2.
    \end{equation*}
\end{lemma}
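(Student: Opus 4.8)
The plan is to prove this as the standard descent inequality for one step of inexact projected gradient descent on the map $\bx\mapsto K(\bx,\by_{t+1},\bz_t)$, where the ``inexactness'' is exactly the STORM gradient error $\widehat{\nabla}f_t-\nabla f(\bx_t)$. The first observation is that the search direction used in Algorithm \ref{alg: alm-storm} is nothing but the true gradient of $K$ in its first argument, with the term $\nabla f(\bx_t)$ replaced by the variance-reduced estimate $\widehat{\nabla}f_t$; that is, $G(\bx_t,\by_{t+1},\bz_t)=\nabla_\bx K(\bx_t,\by_{t+1},\bz_t)+\big(\widehat{\nabla}f_t-\nabla f(\bx_t)\big)$. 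This identity is the conceptual crux of the argument: it isolates the stochastic error into a single additive term that can be peeled off at the end.

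First I would apply the descent lemma coming from $L_K$-smoothness of $\bx\mapsto K(\bx,\by_{t+1},\bz_t)$ (recall $L_K=L_f+\rho\|A\|+\mu$), giving
\begin{equation*}
K(\bx_{t+1},\by_{t+1},\bz_t)-K(\bx_t,\by_{t+1},\bz_t)\leq\langle\nabla_\bx K(\bx_t,\by_{t+1},\bz_t),\bx_{t+1}-\bx_t\rangle+\frac{L_K}{2}\|\bx_{t+1}-\bx_t\|^2.
\end{equation*}
Next I would invoke the optimality (variational) characterization of the projection $\bx_{t+1}=\proj_X(\bx_t-\tau G)$: testing the inequality $\langle\bx_t-\tau G-\bx_{t+1},\bx-\bx_{t+1}\rangle\leq0$ at $\bx=\bx_t$ yields the one-step bound $\langle G,\bx_{t+1}-\bx_t\rangle\leq-\tfrac{1}{\tau}\|\bx_{t+1}-\bx_t\|^2$. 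Substituting the decomposition of $G$ into the linear term above turns $\langle\nabla_\bx K,\bx_{t+1}-\bx_t\rangle$ into $\langle G,\bx_{t+1}-\bx_t\rangle+\langle\nabla f(\bx_t)-\widehat{\nabla}f_t,\bx_{t+1}-\bx_t\rangle$, so the projection bound handles the first inner product and leaves only the error cross-term.

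Finally I would control the cross-term with Young's inequality, choosing the coefficient $\tau$ so that $\langle\nabla f(\bx_t)-\widehat{\nabla}f_t,\bx_{t+1}-\bx_t\rangle\leq\tfrac{\tau}{2}\|\nabla f(\bx_t)-\widehat{\nabla}f_t\|^2+\tfrac{1}{2\tau}\|\bx_{t+1}-\bx_t\|^2$. Collecting the three $\|\bx_{t+1}-\bx_t\|^2$ coefficients, namely $-\tfrac{1}{\tau}$ from the projection, $+\tfrac{1}{2\tau}$ from Young, and $+\tfrac{L_K}{2}$ from smoothness, gives exactly $-\big(\tfrac{1}{2\tau}-\tfrac{L_K}{2}\big)\|\bx_{t+1}-\bx_t\|^2$, which matches the claimed inequality. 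I do not anticipate a genuine obstacle here, since every step is a textbook manipulation; the only point requiring care is the bookkeeping of the Young coefficient (picking $\tau$ rather than some other constant) so that the gradient-error term comes out with the prefactor $\tfrac{\tau}{2}$ needed to pair with the variance recursion of \Cref{lemma: variance_reduction} when this lemma is later folded into the potential $\bar V_t$ in \eqref{eq: barv_def}.
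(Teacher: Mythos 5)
Your proposal is correct and follows essentially the same route as the paper's own proof: the $L_K$-smoothness descent lemma, the decomposition of $\nabla_\bx K$ into $G$ plus the error $\nabla f(\bx_t)-\widehat{\nabla}f_t$, the projection optimality condition tested at $\bx_t$ to get $\langle G,\bx_{t+1}-\bx_t\rangle\leq-\tfrac{1}{\tau}\|\bx_{t+1}-\bx_t\|^2$, and Young's inequality with coefficient $\tau$ on the cross-term. The bookkeeping of the coefficients matches the paper exactly.
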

The proof of \Cref{lemma: descent_alm_storm} could be found in Appendix \ref{proof of lemma: descent_alm_storm}.
\begin{lemma} 
Let \Cref{asmp: 1} hold. For the iterates generated by Algorithm \ref{alg: alm-storm}, we have \label{lemma: descent_alm_storm2}
    \begin{equation} \label{eq: descent_K}
        \begin{aligned}
        K(\bx_t,\by_t,\bz_t)-K(\bx_{t+1},\by_{t+1},\bz_{t+1})&\geq -\eta \|A\bx_t-\bb\|^2+(\frac{\mu}{\beta}-\frac{3\mu}{4})\|\bz_{t+1}-\bz_t\|^2\\
        &-\frac{\tau}{2} \| \nabla f(\bx_t) - \widehat{\nabla}f_t  \|^2 +(\frac{1}{2\tau}-\frac{L_K}{2}-\mu) \| \bx_{t+1} - \bx_t \|^2
    \end{aligned}
    \end{equation}
\end{lemma}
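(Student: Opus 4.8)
The plan is to telescope the one-step change of $K$ by varying a single argument at a time, following the order in which Algorithm \ref{alg: alm-storm} updates its variables: first the dual step $\by_t\to\by_{t+1}$, then the primal step $\bx_t\to\bx_{t+1}$, then the proximal-center step $\bz_t\to\bz_{t+1}$. Concretely, I would write
\begin{align*}
K(\bx_t,\by_t,\bz_t) - K(\bx_{t+1},\by_{t+1},\bz_{t+1})
&= \big[K(\bx_t,\by_t,\bz_t) - K(\bx_t,\by_{t+1},\bz_t)\big] \\
&\quad + \big[K(\bx_t,\by_{t+1},\bz_t) - K(\bx_{t+1},\by_{t+1},\bz_t)\big] \\
&\quad + \big[K(\bx_{t+1},\by_{t+1},\bz_t) - K(\bx_{t+1},\by_{t+1},\bz_{t+1})\big]
\end{align*}
and bound each bracket separately.

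For the first bracket, only the linear term $\langle A\bx-\bb,\by\rangle$ in $K$ depends on $\by$, so the difference equals $-\langle \by_{t+1}-\by_t, A\bx_t-\bb\rangle$; substituting the dual update $\by_{t+1}-\by_t=\eta(A\bx_t-\bb)$ gives exactly $-\eta\|A\bx_t-\bb\|^2$. The second bracket is precisely the negation of the conclusion of \Cref{lemma: descent_alm_storm}, which yields the lower bound $-\frac{\tau}{2}\|\nabla f(\bx_t)-\widehat{\nabla}f_t\|^2 + (\frac{1}{2\tau}-\frac{L_K}{2})\|\bx_{t+1}-\bx_t\|^2$. For the third bracket, only the quadratic $\frac{\mu}{2}\|\bx-\bz\|^2$ depends on $\bz$, and the polarization identity $\|a\|^2-\|b\|^2=\langle a-b, a+b\rangle$ turns it into $\frac{\mu}{2}\langle \bz_{t+1}-\bz_t, 2\bx_{t+1}-\bz_t-\bz_{t+1}\rangle$.

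The only substantive work is reshaping this third bracket. Using $\bz_{t+1}-\bz_t=\beta(\bx_t-\bz_t)$ to write $\bx_t-\bz_t=\beta^{-1}(\bz_{t+1}-\bz_t)$ and $\bx_t-\bz_{t+1}=(\beta^{-1}-1)(\bz_{t+1}-\bz_t)$, I would rewrite $2\bx_{t+1}-\bz_t-\bz_{t+1}=2(\bx_{t+1}-\bx_t)+(2\beta^{-1}-1)(\bz_{t+1}-\bz_t)$, so the third bracket becomes $\mu\langle \bz_{t+1}-\bz_t, \bx_{t+1}-\bx_t\rangle + (\frac{\mu}{\beta}-\frac{\mu}{2})\|\bz_{t+1}-\bz_t\|^2$. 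Applying Young's inequality to the cross term with constant $c=\tfrac12$, namely $\mu\langle \bz_{t+1}-\bz_t, \bx_{t+1}-\bx_t\rangle \geq -\frac{\mu}{4}\|\bz_{t+1}-\bz_t\|^2 - \mu\|\bx_{t+1}-\bx_t\|^2$, produces the $(\frac{\mu}{\beta}-\frac{3\mu}{4})\|\bz_{t+1}-\bz_t\|^2$ coefficient and an extra $-\mu\|\bx_{t+1}-\bx_t\|^2$. Summing the three brackets and merging the two $\|\bx_{t+1}-\bx_t\|^2$ terms into $(\frac{1}{2\tau}-\frac{L_K}{2}-\mu)\|\bx_{t+1}-\bx_t\|^2$ gives the claim. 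I expect the main obstacle to be the bookkeeping in the cross-term split: one must choose the Young constant so that the $\|\bz_{t+1}-\bz_t\|^2$ coefficient lands exactly at $\frac{3\mu}{4}$ while keeping the residual negative $\|\bx_{t+1}-\bx_t\|^2$ small enough to stay absorbable by the positive $\frac{1}{2\tau}$ contribution under the step sizes in \eqref{eq: alg_params}.
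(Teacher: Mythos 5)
Your proposal is correct and follows essentially the same route as the paper: the same three-way telescoping over $\by$, $\bx$, $\bz$, the same invocation of \Cref{lemma: descent_alm_storm} for the primal bracket, and the same polarization-plus-Young treatment of the $\bz$-bracket using $\bx_t-\bz_t=\beta^{-1}(\bz_{t+1}-\bz_t)$ with the identical constant $\langle a,b\rangle\geq -\tfrac14\|a\|^2-\|b\|^2$. The coefficient bookkeeping you describe lands exactly where the paper's does, so there is no gap.
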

The proof of Lemma \ref{lemma: descent_alm_storm2} also could be found in Appendix \ref{proof of lemma: descent_alm_storm}.

Then we can combine the above lemma analyzing one step change of $K(\bx_t,\by_t,\bz_t)$ with the lemmas analyzing one step changes of $d(\by_t,\bz_t), \Psi(\bz_t)$ (Lemma \ref{lem: swo4}) as well as the variance term (Lemma \ref{lemma: variance_reduction}), to obtain the final lemma for the change in the potential function $\bar V_t$ from $t$ to $t+1$. For the proof, we refer to Appendix \ref{appendix: storm_proof}.
\begin{theorem} \label{th: alm_storm_descent}
    Under \Cref{asmp: 1} and \Cref{assumption: variance_reduction}, with the parameters chosen as:
    \begin{equation}\label{eq: alg_params_storm}
        \begin{aligned}
            \mu &= \max\{2, 4L_f\}, \quad \tau \leq \min\left\{\frac{1}{4L_K+8\mu},\frac{1}{\sqrt{48(L_0^2+L_f^2)}}\right\}\\
            \eta &= \min\left\{ \frac{(\mu-L_f)^2\tau}{4\|A\|^2}, \frac{2\mu+\rho\|A\|}{4\|A\|^4}, \frac{\tau}{200\|A\|^2}, \frac{\tau(2\mu+\rho\|A\|^2)}{20\|A\|^2} \right\}, \\
            \beta &= \min\left\{ \frac{\tau}{100}, \frac{1}{50}, \frac{\eta}{36\mu\bar{\sigma}^2} \right\},\\
            \alpha&=48(L_0^2+L_f^2)\tau^2,
        \end{aligned}
    \end{equation}
    where $L_K=L_f+\rho \|A\|+\mu$, $\bar{\sigma}$ is defined in \Cref{global error}, 
    we have 
    \begin{equation} \label{eq: final descent_lemma_alm_storm}
    \begin{aligned}
        \mathbb{E}\bar{V}_t-\mathbb{E}\bar{V}_{t+1}&\geq \frac{\mu}{2\beta}\mathbb{E}\|\bz_t-\bz_{t+1}\|^2+\frac{1}{8\tau}\mathbb{E}\|\bx_t-\bx_{t+1}\|^2+\frac{\eta}{2}\mathbb{E}\|A\bx^*(\by_{t+1},\bz_t)-b\|^2+\frac{\tau}{4}\mathbb{E}\|\widehat{\nabla}f_t -\nabla f(\bx_{t})\|^2\\
        &\quad -144(L_0^2+L_f^2)\sigma^2\tau^3.
    \end{aligned}
\end{equation}
\end{theorem}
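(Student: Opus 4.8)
The plan is to telescope $\bar V_t - \bar V_{t+1}$ across its four constituent pieces and substitute the four descent estimates already available. Writing $\bar V_t = K(\bx_t,\by_t,\bz_t) - 2d(\by_t,\bz_t) + 2\Psi(\bz_t) + \frac{1}{48(L_0^2+L_f^2)\tau}\mathbb{E}\|\widehat{\nabla}f_t - \nabla f(\bx_t)\|^2$, the difference splits into the $K$-difference (bounded below by \Cref{lemma: descent_alm_storm2}), the quantities $2(d(\by_{t+1},\bz_{t+1})-d(\by_t,\bz_t))$ and $-2(\Psi(\bz_{t+1})-\Psi(\bz_t))$ (both bounded below using \Cref{lem: swo4}), and the gradient-error difference (bounded below by invoking \Cref{lemma: variance_reduction} on the $t+1$ term). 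The goal is then to regroup all resulting terms into the four nonnegative quantities on the right-hand side of \eqref{eq: final descent_lemma_alm_storm} plus the single $-144(L_0^2+L_f^2)\sigma^2\tau^3$ error, which is exactly what forces the parameter choices in \eqref{eq: alg_params_storm}.

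First I would dispose of the variance bookkeeping, which is clean and pins down $\alpha$. Feeding \Cref{lemma: variance_reduction} into the last term yields a contribution of at least $\frac{1-(1-\alpha)^2}{48(L_0^2+L_f^2)\tau}\mathbb{E}\|\widehat{\nabla}f_t-\nabla f(\bx_t)\|^2 - \frac{1}{16\tau}\mathbb{E}\|\bx_{t+1}-\bx_t\|^2 - \frac{3\alpha^2\sigma^2}{48(L_0^2+L_f^2)\tau}$. With the choice $\alpha = 48(L_0^2+L_f^2)\tau^2 \le 1$ (valid since $\tau \le 1/\sqrt{48(L_0^2+L_f^2)}$) one has $1-(1-\alpha)^2 = \alpha(2-\alpha) \ge \alpha$, so the first term is at least $\tau\,\mathbb{E}\|\widehat{\nabla}f_t-\nabla f(\bx_t)\|^2$ and the error term equals exactly $144(L_0^2+L_f^2)\sigma^2\tau^3$. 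Combined with the $-\frac{\tau}{2}\mathbb{E}\|\nabla f(\bx_t)-\widehat{\nabla}f_t\|^2$ in \Cref{lemma: descent_alm_storm2}, this leaves a net coefficient of $\frac{\tau}{2}\ge\frac{\tau}{4}$ on the gradient error. For the $\|\bx_{t+1}-\bx_t\|^2$ budget, \Cref{lemma: descent_alm_storm2} contributes $(\frac{1}{2\tau}-\frac{L_K}{2}-\mu)$; subtracting the $\frac{1}{16\tau}$ above and using $\tau \le \frac{1}{4L_K+8\mu}$ (so $\frac{L_K}{2}+\mu \le \frac{1}{8\tau}$) leaves at least $\frac{5}{16\tau}\ge\frac{1}{8\tau}$, with the surplus reserved for the next step.

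Next I would complete the square in the constraint-violation terms. \Cref{lemma: descent_alm_storm2} supplies $-\eta\|A\bx_t-\bb\|^2$, and $2(d(\by_{t+1},\bz_{t+1})-d(\by_t,\bz_t))$ supplies $2\eta\langle A\bx_t-\bb, A\bx^*(\by_{t+1},\bz_t)-\bb\rangle$; the identity $-\eta\|a\|^2+2\eta\langle a,b\rangle = \eta\|b\|^2 - \eta\|a-b\|^2$ with $a=A\bx_t-\bb$, $b=A\bx^*(\by_{t+1},\bz_t)-\bb$ converts these into $\eta\|A\bx^*(\by_{t+1},\bz_t)-\bb\|^2 - \eta\|A(\bx_t-\bx^*(\by_{t+1},\bz_t))\|^2$. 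I expect this to be the main obstacle: the residual $\eta\|A(\bx_t-\bx^*)\|^2 \le \eta\|A\|^2\|\bx_t-\bx^*(\by_{t+1},\bz_t)\|^2$ must be absorbed, and for this I would invoke the global error bound of \Cref{global error}, which controls $\|\bx_t-\bx^*(\by_{t+1},\bz_t)\|$ by the prox-gradient residual $\frac{1}{\tau}\|\bx_{t+1}-\bx_t\|$ together with the inexactness $\|\widehat{\nabla}f_t-\nabla f(\bx_t)\|$ (using nonexpansiveness of the projection to relate the inexact step $\bx_{t+1}$ to the exact one). Squaring gives $\|\bx_t-\bx^*\|^2 \lesssim \bar\sigma^2(\tau^{-2}\|\bx_{t+1}-\bx_t\|^2 + \mathbb{E}\|\widehat{\nabla}f_t-\nabla f(\bx_t)\|^2)$, so the residual is bounded by $O(\eta\|A\|^2\bar\sigma^2\tau^{-2})\|\bx_{t+1}-\bx_t\|^2 + O(\eta\|A\|^2\bar\sigma^2)\mathbb{E}\|\widehat{\nabla}f_t-\nabla f(\bx_t)\|^2$. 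The conditions $\eta \le \frac{(\mu-L_f)^2\tau}{4\|A\|^2}$ and $\eta \le \frac{\tau}{200\|A\|^2}$ are precisely what make both pieces absorbable by the $\frac{5}{16\tau}$ surplus on $\|\bx_{t+1}-\bx_t\|^2$ and the $\frac{\tau}{4}$ surplus on the gradient error, leaving $\frac{\eta}{2}\|A\bx^*(\by_{t+1},\bz_t)-\bb\|^2$ and $\frac{1}{8\tau}\|\bx_{t+1}-\bx_t\|^2$ on the right-hand side.

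Finally I would clean up the $\bz$-inner products. The $\frac{\mu}{2}$-weighted term from $d$ and the $\mu$-weighted term from $\Psi$ in \Cref{lem: swo4}, after substituting $\bz_{t+1}+\bz_t-2\bx^*(\by_{t+1},\bz_{t+1}) = 2(\bz_t-\bx^*(\by_{t+1},\bz_{t+1}))+(\bz_{t+1}-\bz_t)$, collapse to $\mu\|\bz_{t+1}-\bz_t\|^2 + 2\mu\langle \bz_{t+1}-\bz_t,\, \bar\bx^*(\bz_t)-\bx^*(\by_{t+1},\bz_{t+1})\rangle$. I would control the cross term by Young's inequality after bounding $\|\bar\bx^*(\bz_t)-\bx^*(\by_{t+1},\bz_{t+1})\|$ through the error bound (constant $\bar\sigma$) in terms of $\|A\bx^*(\by_{t+1},\bz_t)-\bb\|$ and $\|\bz_{t+1}-\bz_t\|$; the induced $\|A\bx^*-\bb\|^2$ error is soaked up by the second half of the $\eta$-budget and the $\|\bz_{t+1}-\bz_t\|^2$ error, together with the $O(\mu)$ loss from the $\frac{\mu}{\sigma_4}$ term of $\Psi$, by the large $\frac{\mu}{\beta}$ budget of \Cref{lemma: descent_alm_storm2}. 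Here the constraints $\beta \le \frac{\eta}{36\mu\bar\sigma^2}$ and $\beta \le \frac{\tau}{100}$, with $\mu = \max\{2,4L_f\}$ keeping $\sigma_4=\frac{\mu-L_f}{\mu}$ bounded away from $0$, guarantee the leftover $\|\bz_{t+1}-\bz_t\|^2$ coefficient is at least $\frac{\mu}{2\beta}$. This $\bz$-bookkeeping, together with the error-bound absorption of the previous paragraph, is where essentially all the parameter interdependence in \eqref{eq: alg_params_storm} is consumed, and is the step I expect to require the most care, mirroring the intricate deterministic argument of \cite{zhang2022error}.
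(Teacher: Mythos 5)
Your proposal is correct and follows essentially the same route as the paper's proof: the same splitting of $\bar V_t$, the same use of \Cref{lemma: descent_alm_storm2}, \Cref{lem: swo4} and \Cref{lemma: variance_reduction}, the same completing-the-square on the $\eta$-terms, the same treatment of the $\bz$-inner products via $\bar\sigma$, and the same identification of which parameter condition absorbs which residual. The only slip is a mislabeled reference: the bound on $\|\bx_t-\bx^*(\by_{t+1},\bz_t)\|$ in terms of $\tau^{-1}\|\bx_{t+1}-\bx_t\|$ and $\|\widehat{\nabla}f_t-\nabla f(\bx_t)\|$ comes from the prox-gradient error bound of \Cref{Lem of K} (i.e.\ \Cref{lem: x-x(y,z)}, with constant $(\mu-L_f)^{-1}$), not from the Hoffman-type bound of \Cref{global error} with constant $\bar\sigma$ — though the mechanism you describe (exact-step error bound plus nonexpansiveness of the projection) is exactly the right one, and the $(\mu-L_f)^2$ appearing in your cited condition on $\eta$ is consistent with the correct lemma.
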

Note that, on a high level, the main difference between \Cref{th: alm_storm_descent} and \Cref{lem: vt_ineq} is that the order of $\tau$ in the error term is different. In \Cref{th: alm_storm_descent}, the order of $\tau$ is $O(\tau^3)$, while in \Cref{lem: vt_ineq}, the order of $\tau$ is $O(\tau^2)$, which contribute to  a faster convergence rate in the ALM-STORM algorithm.

\begin{theorem}\label{th: storm_complexity}
    Let Assumptions \ref{asmp: 1} and \ref{assumption: variance_reduction} hold. We have that $(x_{t^*},y_{t^*})$ is an $\varepsilon$-stationary point, where $t^*$ is selected uniformly at random from $\{1, \dots, T\}$ with $T=\Theta(\varepsilon^{-3})$. The complexity of the whole procedure is $O(\varepsilon^{-3})$.
\end{theorem}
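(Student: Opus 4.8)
The plan is to convert the one-step estimate of \Cref{th: alm_storm_descent} into a convergence rate by telescoping, and then to read off the two defining conditions of an $\varepsilon$-stationary point from the quantities that the descent controls. First I would sum \eqref{eq: final descent_lemma_alm_storm} over $t=0,\dots,T-1$, so that the left-hand side collapses to $\mathbb{E}\bar V_0-\mathbb{E}\bar V_T$. To make this useful I need a uniform lower bound $\bar V_t\geq \underline V$; this is where the global error bound (the constant $\bar\sigma$ of \Cref{global error}) together with the compactness/Slater structure enters, as in \cite{zhang2022error}: the combination $K-2d+2\Psi$ from \eqref{eq: barv_def} stays bounded below along the trajectory, and the extra variance term is nonnegative. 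The telescoped inequality then reads
\begin{equation*}
\sum_{t=0}^{T-1}\Big[\tfrac{\mu}{2\beta}\mathbb{E}\|\bz_t-\bz_{t+1}\|^2+\tfrac{1}{8\tau}\mathbb{E}\|\bx_t-\bx_{t+1}\|^2+\tfrac{\eta}{2}\mathbb{E}\|A\bx^*(\by_{t+1},\bz_t)-\bb\|^2+\tfrac{\tau}{4}\mathbb{E}\|\widehat{\nabla}f_t-\nabla f(\bx_t)\|^2\Big]\leq \mathbb{E}\bar V_0-\underline V+144(L_0^2+L_f^2)\sigma^2\tau^3T.
\end{equation*}

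Next I would fix $\tau\asymp T^{-1/3}$, so that by \eqref{eq: alg_params_storm} also $\eta\asymp\beta\asymp T^{-1/3}$ and the residual $144(L_0^2+L_f^2)\sigma^2\tau^3T=O(1)$. Dividing by $T$ and using that $t^*$ is uniform on $\{1,\dots,T\}$, each averaged term is $O(1/T)$; after dividing out its coefficient and using $\|\bz_{t+1}-\bz_t\|=\beta\|\bx_t-\bz_t\|$, this gives
\begin{align*}
\mathbb{E}\|\bx_{t^*}-\bz_{t^*}\|^2&=O(T^{-2/3}), & \tfrac{1}{\tau^2}\mathbb{E}\|\bx_{t^*}-\bx_{t^*+1}\|^2&=O(T^{-2/3}),\\
\mathbb{E}\|A\bx^*(\by_{t^*+1},\bz_{t^*})-\bb\|^2&=O(T^{-2/3}), & \mathbb{E}\|\widehat{\nabla}f_{t^*}-\nabla f(\bx_{t^*})\|^2&=O(T^{-2/3}).
\end{align*}
With $T=\Theta(\varepsilon^{-3})$ every right-hand side is $O(\varepsilon^2)$. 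The decisive point, and the reason the rate improves over \Cref{lem: vt_ineq}, is that the error term is now $O(\tau^3)$ rather than $O(\tau^2)$, which balances at $\tau\asymp T^{-1/3}$ and hence $T\asymp\varepsilon^{-3}$; in particular the STORM estimate is itself $\varepsilon$-accurate in mean square, so no large-batch post-processing is required. Since each iteration queries only two sample gradients and the initial minibatch has size $N=T^{1/6}=o(T)$, the total oracle complexity is $O(T)=O(\varepsilon^{-3})$.

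The final and most delicate step converts these four mean-square bounds into the two conditions of $\varepsilon$-stationarity. For the KKT residual I would use the optimality of the projection $\bx_{t^*+1}=\proj_X(\bx_{t^*}-\tau G(\bx_{t^*},\by_{t^*+1},\bz_{t^*}))$, namely $\tfrac{1}{\tau}(\bx_{t^*}-\bx_{t^*+1})-G(\bx_{t^*},\by_{t^*+1},\bz_{t^*})\in\partial I_X(\bx_{t^*+1})$. Substituting $G=\widehat{\nabla}f_{t^*}+A^\top\by_{t^*+1}+\rho A^\top(A\bx_{t^*}-\bb)+\mu(\bx_{t^*}-\bz_{t^*})$ shows that
\begin{equation*}
\bv:=\nabla f(\bx_{t^*+1})-\widehat{\nabla}f_{t^*}+\tfrac{1}{\tau}(\bx_{t^*}-\bx_{t^*+1})-\rho A^\top(A\bx_{t^*}-\bb)-\mu(\bx_{t^*}-\bz_{t^*})
\end{equation*}
lies in $\nabla f(\bx_{t^*+1})+A^\top\by_{t^*+1}+\partial I_X(\bx_{t^*+1})$. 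The terms $\tfrac{1}{\tau}\|\bx_{t^*}-\bx_{t^*+1}\|$, $\|\widehat{\nabla}f_{t^*}-\nabla f(\bx_{t^*})\|$ and $\mu\|\bx_{t^*}-\bz_{t^*}\|$ are all $O(\varepsilon)$ by the previous step, and $L_f$-smoothness controls $\|\nabla f(\bx_{t^*+1})-\nabla f(\bx_{t^*})\|\leq L_f\|\bx_{t^*+1}-\bx_{t^*}\|$, so $\|\bv\|=O(\varepsilon)$ once $\|A\bx_{t^*}-\bb\|$ is small. For feasibility I would bound $\|\bx_{t^*}-\bx^*(\by_{t^*+1},\bz_{t^*})\|$: because $\bx\mapsto K(\bx,\by_{t^*+1},\bz_{t^*})$ is $(\mu-L_f)$-strongly convex (using $\mu\geq 4L_f$) and $\bx_{t^*+1}$ is one projected inexact-gradient step on it, the standard gradient-mapping argument gives $\|\bx_{t^*}-\bx^*(\by_{t^*+1},\bz_{t^*})\|\lesssim\tfrac{1}{\tau}\|\bx_{t^*}-\bx_{t^*+1}\|+\|\widehat{\nabla}f_{t^*}-\nabla f(\bx_{t^*})\|=O(\varepsilon)$; combined with $\mathbb{E}\|A\bx^*(\by_{t^*+1},\bz_{t^*})-\bb\|=O(\varepsilon)$ and the triangle inequality this yields $\mathbb{E}\|A\bx_{t^*}-\bb\|=O(\varepsilon)$. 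Rescaling $\varepsilon$ by the absolute constants gives the claim; the index mismatch with the stated output $(\bx_{t^*},\by_{t^*})$ only affects constants, since $\|\bx_{t^*+1}-\bx_{t^*}\|$ and $\|\by_{t^*+1}-\by_{t^*}\|=\eta\|A\bx_{t^*}-\bb\|$ are $O(\varepsilon)$ or smaller.

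I expect the feasibility half of the last step, rather than the telescoping, to be the main obstacle: the descent only controls the infeasibility of the auxiliary proximal minimizer $\bx^*(\by_{t^*+1},\bz_{t^*})$, not of the iterate $\bx_{t^*}$, and bridging the two genuinely requires both the strong convexity of the proximal subproblem and the global error bound of \Cref{global error} to turn a small gradient mapping into a small $\|\bx_{t^*}-\bx^*(\by_{t^*+1},\bz_{t^*})\|$. A secondary difficulty is the uniform lower bound on $\bar V_t$ needed for telescoping, since $-2d(\by_t,\bz_t)$ is a priori unbounded in $\by_t$; this is again resolved through the error-bound and duality structure inherited from \cite{zhang2022error}.
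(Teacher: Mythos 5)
Your proposal is correct and follows essentially the same route as the paper: telescope the descent inequality of \Cref{th: alm_storm_descent} with $\tau\asymp T^{-1/3}$, extract the four $O(T^{-2/3})$ mean-square bounds, certify the KKT residual via the projection optimality condition at $\bx_{t^*+1}$, and obtain feasibility by bounding $\|\bx_{t^*}-\bx^*(\by_{t^*+1},\bz_{t^*})\|$ through the gradient-mapping/strong-convexity estimate (\Cref{lem: x-x(y,z)}) and triangulating with $\|A\bx^*(\by_{t^*+1},\bz_{t^*})-\bb\|$. The only inaccuracies are attributional and harmless: the uniform lower bound $\bar V_t\geq\underline f$ comes from the elementary inequalities $d(\by,\bz)\leq K(\bx,\by,\bz)$ and $d(\by,\bz)\leq\Psi(\bz)$ (\Cref{lem: lb}), not from Slater/compactness or the global error bound, and the bridge from the gradient mapping to $\|\bx_{t^*}-\bx^*(\by_{t^*+1},\bz_{t^*})\|$ needs only strong convexity, not \Cref{global error}.
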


For the proof of this theorem, we refer to Appendix \ref{appendix: storm_proof}.

\begin{remark}\label{rem: storm}
    Under Assumptions \ref{asmp: 1}, \ref{assumptions for stochasitc} and \ref{assumption: variance_reduction}, we can combine this variance reduction technique with our extension to stochastic constraints in \Cref{sec: rand_sec} to obtain the same $O(\varepsilon^{-3})$ complexity result for the stochastic linear constraints case. We provide a brief justification for this claim in Appendix \ref{appendix: storm_proof}.
\end{remark}
\section{Applications}\label{sec: apps}
\subsection{Distributed Optimization}
In this section, we consider the distributed optimization problem with the following form
\begin{equation}\label{eq: distributed}
    \min_{\bx\in X} \Big\{f(\bx)=\frac{1}{N}\sum_{i=1}^N f_i(\bx)\Big\},
\end{equation}
where $X\subset\mathbb{R}^n$ is a polyhedral set.

Typically, this problem is addressed using a network with \( N \) nodes, represented as an undirected graph \( G = (V, E) \), where \( V \) is the set of nodes and \( E \) is the set of edges. The number of nodes is \( |V| = N \) and the number of edges is \( |E| = M \). Each node \( i \) can only access its own local function \( f_i \) and communicate with with its neighboring node \( j \), meaning that an edge \( (i, j) \) exists in \( E \).
Classically, we now model this communication setting by introducing \( N \) local variables \( \bx_1, \bx_2, \dots, \bx_N \) for each node and define the concatenated vector as
\[
\bx = \begin{pmatrix} \bx_1  \\ \bx_2 \\ \vdots \\ \bx_N\end{pmatrix}.
\]
With this, the problem \eqref{eq: distributed} can be formulated as follows:
\begin{equation}\label{eq: prob_graph}
    \begin{aligned}
        &\min_{\bx_1, \dots, \bx_N} \frac{1}{N} \sum_{i=1}^{N} f_i(\bx_i)\\
        &\text{s.t. } \bx_i = \bx_j, \quad \forall (i,j) \in E \text{~and~} \bx_i \in X \quad \forall i=1,\dots, N.
    \end{aligned}
\end{equation}
Next, we work to reformulate this into a more concise representation. Specifically, we introduce the \emph{edge-agent incidence matrix} \( W \in \mathbb{R}^{\frac{N(N-1)}{2} \times N} \). Each row of \( W \) corresponds to a node in the graph \( G \). In particular, if we take the \( k \)th row and if this row represents the pair \( (i, j) \) in the graph, then if there is an edge between $(i,j)$, we define \( W_{k,i} = 1 \), \( W_{k,j} = -1 \). Then, we set all other entries in the \( k \)th row to zero.

Let us recall that $\bx_i\in\mathbb{R}^n$ and $\bx \in \mathbb{R}^{nN}$. To represent the relationships of nodes by using $W$, we define $A=W\otimes I_n$, where $\otimes$ denotes the Kronecker product. Then, we can rewrite the constraints \eqref{eq: prob_graph} as $A\bx=0$.

Here, we consider a more general case where the network structure graph is random, that is, the connections between the nodes may change from iteration to iteration.
We consider a discrete-time random graph model, which is discussed in \cite{Chaintreau2007networkmodel}. In this model, the network is represented as a time-varying graph \( G_t = (V, E_t) \), where \( V \) is the set of nodes and \( E_t \) is the set of edges at time \( t \). The edges \( E_t \) are determined by a probabilistic process, capturing the dynamic nature of the network.
$\mathbb{P}[(i,j)\in E_t]=p_{ij}$ for any pair of nodes $i,j$ and the events \{$(i,j)\in E_t$\}, for all pair of nodes $i,j$ are mutually independent.

This model is particularly useful for analyzing communication networks where connections between nodes are not static but change over time due to mobility, interference, or other dynamic factors. The random graph model allows us to study the behavior of algorithms and protocols under realistic, time-varying network conditions.

Hence, we model this situation with $W=W(\zeta)$, where $\zeta$ is a random variable. Then the constraints $A\bx=0$ changes to $\mathbb{E}_{\zeta \thicksim P}[A(\zeta)]\bx=A\bx=0$. In the discrete-time model, a row in $\mathbb{E}[W]$ represents the likelihood of a connection between $(i,j)$, that is, the i-th entry equals to $p_{ij}$, the j-th entry equals to $-p_{ij}$.

The problem \eqref{eq: prob_graph} comes to the following form:

\begin{equation}\label{eq:stochasitc linear constraints app}
    \begin{aligned}
        &\min_{\bx \in X^N} f(\bx)\\
        &\text{s.t. } \mathbb{E}_{\zeta \thicksim P}[A(\zeta)]\bx=0,
    \end{aligned}
\end{equation}
where \( f(\bx) = \frac{1}{N} \sum_{i=1}^{N} f_i(\bx_i) \). Then, we can use \Cref{alg:example stochasitc} to solve the problem \eqref{eq:stochasitc linear constraints app}.

In the discrete-time model, for example, the first row of $\mathbb{E}_{\zeta \thicksim P}[A(\zeta)]$ equals to $(p_{12},-p_{12},0,...,0)$, hence the first entry of $\mathbb{E}_{\zeta \thicksim P}[A(\zeta)\bx]=p_{12}x_1-p_{12}x_2$.  Then by the definition of $\varepsilon-$stationary point, particularly the feasibility bound,
we will get
\begin{align*}
    \sum_{i<j}\mathbb{E}\|p_{ij}x_i-p_{ij}x_j\|^2\leq \varepsilon^2.
\end{align*}
We assume $\forall i,j$, $p_{ij}\geq p>0$, then this condition assures that each $\bx_i$ converges to the same point, which is also the stationary solution of the original problem \eqref{eq: distributed}.

With our method, we do not need the assumption that the graph is connected at any iteration and our developments in \Cref{sec: rand_sec} apply for this problem. We can convert our stochastic primal-dual algorithm to distributed form, where we refer to \cite{chen21distributed}.

\subsection{Discrete Optimization with Smooth Nonconvex Regularizers}
In this section, we follow an idea from \cite{zhang17discrete_applications} to deal with discrete optimization problems by using continuous nonconvex regularizers to relax the discrete constraints. Then, this brings the need to handle objective functions with nonconvexity. 

We consider a communication network represented by a directed graph $G = (\mathcal{V}, \mathcal{L})$, where $\mathcal{V}$ denotes the set of nodes and $\mathcal{L}$ represents the set of directed links. 
We define $\mathcal{V}_f$ as the subset of function nodes capable of providing service function $f$, where each node has computational capacity $\mu_i$. The network serves $K$ data flows, each requiring a service function chain $\mathcal{F}(k)$ that must be executed in sequence. 
Then we denote $r_{ij}(k)$ as the rate of flow $k$ on link $(i,j)$, $r_{ij}(k,f)$ as the rate of virtual flow $(k,f)$ on link $(i,j)$ and $x_{i,j}(k)$ as the binary variable indicating whether or not  $f$ is used by flow $k$ in node $i$. 
The network slicing problem aims to determine optimal routes and flow rates that satisfy both service function chain requirements and capacity constraints of all links and function nodes.

We omit some details of the constraints about the network slicing problem for brevity, and just write down those as linear constraints which are discussed extensively in \cite{zhang17discrete_applications}. Then abstract form of this problem is
\begin{equation}\label{eq: prob_discrete}
    \begin{aligned}
    \min_{\mathbf{r}, \mathbf{x}} \quad & g(\mathbf{r}) = \sum_{k, (i,j)} r_{ij}(k) \\
    \text{s.t.} \quad & A\begin{bmatrix}
        \mathbf{r} \\
        \mathbf{x}
    \end{bmatrix}=0,  \\
    &\sum_{i\in V_f}x_{i,f}(k)=1, \forall f\in \mathcal{F}(k), \forall k\\
    & r_{ij}(k) \geq 0, \quad \forall k, \forall (i,j) \in \mathcal{L}, \\
    & r_{ij}(k,f) \geq 0, \quad \forall f \in \mathcal{F}(k), \forall k, \forall (i,j) \in \mathcal{L}, \\
    & x_{i,f}(k) \in \{0,1\}, \quad \forall i \in \mathcal{V}_f, \forall f \in \mathcal{F}(k), \forall k,
\end{aligned}
\end{equation}
where $\mathbf{r}=\{r_{ij}(k), r_{ij}(k,f)\}$ and $\mathbf{x}=\{x_{i,f}(k)\}$. This is a linear programming problem with binary constraints. 

Then \citet{zhang17discrete_applications} uses the continuous relaxation of the binary constraints and add a nonconvex regularizer to solve the problem. In particular, this work shows that the solution of the binary LP can be approximated by this continuous but \emph{nonconvex} problem
\begin{equation}\label{eq: prob_discrete_relax}
    \begin{aligned}
    \min_{\mathbf{r}, \mathbf{x}} \quad & g(\mathbf{r})+\sigma P_\epsilon(\bx) \\
    \text{s.t.} \quad & A\begin{bmatrix}
        \mathbf{r} \\
        \mathbf{x}
    \end{bmatrix}=0,  \\
    & r_{ij}(k) \geq 0, \quad \forall k, \forall (i,j) \in \mathcal{L}, \\
    & r_{ij}(k,f) \geq 0, \quad \forall f \in \mathcal{F}(k), \forall k, \forall (i,j) \in \mathcal{L}, \\
    & x_{i,f}(k) \in [0,1], \quad \forall i \in \mathcal{V}_f, \forall f \in \mathcal{F}(k), \forall k,
\end{aligned}
\end{equation}
where $\sigma>0$ is the penalty parameter and nonconvexity stems from $P_\epsilon$. In particular, we have $P_\epsilon(\bx)=\sum_k\sum_{f\in \mathcal{F}(k)}(\|x_f(k)+\epsilon\mathbf{1}\|_p^p-c_{\epsilon,f}$, where $x_f(k)=\{x_{i,f}(k)\}_{i\in V_f}$, $c_{\epsilon,f}=(1+\epsilon)^p+(|V_f|-1)\epsilon^p$ and $p\in (0,1)$, $\epsilon$ is any nonnegative constant.
We can then apply Algorithm \ref{alg:example} to solve the problem \eqref{eq: prob_discrete_relax}.

\subsection{Classification with Fairness}
We consider the setting of a \emph{binary classification} task, where the goal is to learn a decision rule
\[
f_\theta : \mathbb{R}^d \to \{-1, +1\},
\]
where $\theta$ is the parameter.

We note that we use a different notation in this section than the rest of our text to be compatible with the application we consider.

Given a training set of labeled examples \(\{(x_i, y_i)\}_{i=1}^N\), each \(x_i\) is a feature vector in \(\mathbb{R}^d\) and \(y_i \in \{-1, +1\}\). The task is to find parameters \(\theta\) that define a decision boundary and minimize a chosen loss function \(L(\theta)\) on the training data. Once trained, the classifier predicts \(+1\) if a test point's signed distance  to the boundary, denoted as $d_{\theta^*}(x)$, is non-negative, and \(-1\) otherwise, where $\theta^*=\argmin_\theta L(\theta)$.

In \cite{zafar17fairness}, the authors define the measure of (un)fairness of a decision boundary as the covariance between the set of sensitive attributes 
\(\{z_i\}_{i=1}^N\)
and the signed distance of each sample's feature vector to the decision boundary 
\(\{d_{\theta}(x_i)\}_{i=1}^N\). Formally,
\begin{equation}
\mathrm{Cov}\bigl(z, d_{\theta}(x)\bigr) 
= \mathbb{E}\bigl[(z - \bar{z})\,d_{\theta}(x)\bigr] 
\;-\; \mathbb{E}[z - \bar{z}]\,\bar{d}_{\theta}(x)
\;\;\approx\;\;
\frac{1}{N} \sum_{i=1}^N (z_i - \bar{z}) \, d_{\theta}\bigl(x_i\bigr),
\tag{2}
\end{equation}
where \(\mathbb{E}[z - \bar{z}]\,\bar{d}_{\theta}(x) = 0\) since \(\mathbb{E}[z - \bar{z}] = 0\).
Here, \(\bar{z}\) denotes the average of the sensitive attribute over the training set, and
\(\bar{d}_{\theta}(x)\) is the mean signed distance.

Considering the setting of linear classifier, that is, $f_\theta(x)=\langle \theta, x \rangle$,  one has the following problem:
\begin{equation}\label{eq: prob_fair_linear}
    \begin{aligned}
    &\min_{\theta} \quad  L(\theta) = \frac{1}{N} \sum_{i=1}^N V(f_\theta(x_i), y_i) \\
    &\text{s.t.}\quad \frac{1}{N}\sum_{i=1}^{N}(z_i-\bar{z})\theta^\top x_i\leq c\\
    &\quad \quad \; \frac{1}{N}\sum_{i=1}^{N}(z_i-\bar{z})\theta^\top x_i\geq -c,
\end{aligned}
\end{equation}
where $c$ is the covariance threshold. 

Although $L$ for logistic regression that is considered in \cite{zafar17fairness} is indeed convex, there are many nonconvex loss functions for this classification problem. For example
\cite{krause2004leveraging} proposes a smooth nonconvex loss function called Logistic difference loss function for classification problems, which is defined as follows:
\begin{equation}\label{eq: logistic_difference_loss}
    V(f(x),y)=\log(1+e^{-yf(x)})-\log(1+e^{-yf(x)-\mu}),
\end{equation}
where the $\mu$ is a parameter.

In \cite{zhao2010nonconvex_loss}, the authors propose smoothed 0-1 loss function as follow:
\begin{equation} \label{eq: smoothed_0-1_loss}
    V(f(x),y) =
\begin{cases} 
0, & yf(x) > 1 \\
\frac{1}{4} yf(x)^3 - \frac{3}{4} yf(x) + \frac{1}{2}, & -1 \leq yf(x) \leq 1 \\
1, & yf(x) < -1.
\end{cases}
\end{equation}
We refer to the review about the nonconvex loss functions used in classification problems in \cite{zhao2010nonconvex_loss}. This work showcases certain advantages of using nonconvex loss functions, such as robustness to outliers, better approximation to $0-1$ loss and improved generalization, which are supported by experiment results in \cite{zhao2010nonconvex_loss}.

When we use a nonconvex smooth loss function in the classification problem, we can apply our Algorithm \ref{alg:example} to solve the problem \eqref{eq: prob_fair_linear}.

\section{Related Works}\label{sec: relwork}
Since the literature of algorithms solving the problem \eqref{eq: prob1} is broad with different focuses, we will survey the related results in three sub-cases, covering different stochastic or deterministic access to objective and constraints. When we mention oracle or sample complexity results in the sequel, we always consider the complexity for obtaining an $\varepsilon$-stationary point, in view of the definition in \Cref{sec: prelim}.
\paragraph{Deterministic objective and deterministic constraints. } The setting when objective $f$ in \eqref{eq: prob1} is deterministic is the most well-studied with many results in the classical literature \citep{bertsekas2016nonlinear}. Recent work focused on characterizing the global oracle complexity of Lagrangian or augmented Lagrangian algorithms. With nonlinear and nonconvex constraints, many of the existing algorithms analyzing AL-based algorithms need to rely on strong constraint qualification and boundedness assumptions and use large penalty parameters to ensure feasibility \citep{li2021rate, lin2022complexity, kong2019complexity, kong2023accelerated, kong2023iteration}. The existing frameworks so far fail to capture the importance of dual variable updates, which are, in fact, the main reason behind the ability to use constant penalty parameters while ensuring convergence, see for example \cite{bertsekas2014constrained}. The recent works mentioned above obtained the complexity bounds $O(\varepsilon^{-3})$ for general nonlinear constraints with no specialization for linear constraints. When specialized to convex functional constraints, the best-known rate for these methods has been $O(\varepsilon^{-2.5})$ \citep{lin2022complexity}.

In the case when the constraints are linear, such as \eqref{eq: prob1} with $X=\mathbb{R}^n$, the work of \citet{hong2016decomposing} managed to analyze ALM with constant penalty parameters and non-negligible dual updates to get optimal complexity $O(\varepsilon^{-2})$. The case of $X\neq \mathbb{R}^n$ turned out to be significantly more challenging with many works focusing on variants of ALM with large penalty parameters (depending on the inverse of the final accuracy) to ensure near-feasibility and \emph{negligible} dual updates that do not help with feasibility and obtaining the suboptimal complexity $\widetilde{O}(\varepsilon^{-2.5})$ \citep{kong2023accelerated, kong2023iteration}. The exceptions are the works  \cite{zhang2020proximal, zhang2022error} that showed, for the case $X$ polyhedral,  near-optimal complexity $O(\varepsilon^{-2})$ with a constant penalty parameter and dual steps with constant step sizes, with no constraint qualification. The key step was the global error bound that our work also relied on.

\paragraph{Stochastic objective and deterministic constraints} One important step in generalizing the template to tasks arising in machine learning was to consider stochastic objectives where we have access to an unbiased gradient. With general nonlinear constraints and Lipschitzness of $\nabla f$,  the optimal sample complexity is $O(\varepsilon^{-4})$ which is obtained with double loop algorithms \citep{curtis2024worst, boob2023stochastic,ma2020quadratically}. These works also come with strong assumptions on the boundedness of the primal domain as well as constraint qualifications, which are often not necessary with linear constraints.

Another set of results concern stochastic optimization with deterministic nonlinear constraints with penalty-based algorithms and, requiring large penalty parameters to ensure near-feasibility rather than dual updates \citep{lu2024variance,alacaoglu2024complexity}. These works assume expected Lipschitzness of the stochastic gradients, stated in \Cref{assumption: variance_reduction}, which is stronger than Lipschitzness of $\nabla f$ (we will unpack this further in the sequel). Since these works focuses on nonlinear functional constraints, the analysis requires strong boundedness assumptions as well as constraint qualifications, unlike our results in \Cref{sec: main_conv} for deterministic linear constraints.

One of the most related to our setting is \cite{alacaoglu2024complexity} that considered an augmented Lagrangian algorithm with a constant penalty parameter and non-negligible dual updates and obtained the complexity $O(\varepsilon^{-3})$ for linear equality constraints and expected Lipschitzness. In particular, this work only covered the case $X=\mathbb{R}^n$ and left open the question of handling the case of more general $X$ (see \citep[Section 5]{alacaoglu2024complexity}). 

In this work, we address an important special case of this open question when $X$ is polyhedral, allowing our analysis to cover linear inequality constraints. The work of \cite{alacaoglu2024complexity} focused on applying variance reduction on estimation of the gradient of $f$, which means that the assumption on the stochastic gradients was Assumption \ref{assumption: variance_reduction}, stronger than Assumption \ref{asmp: 1}.
We show in \Cref{sec: var_red} how to obtain the same optimal complexity as this paper while handling the case when $X$ is polyhedral to cover problems with linear inequality constraints, which cannot be solved by \cite{alacaoglu2024complexity}.

Moreover, we also get the complexity $O(\varepsilon^{-4})$ under Assumption \ref{asmp: 1}.
This complexity is optimal under Assumption \ref{asmp: 1} and we refer to \cite{arjevani2023lower} for further details on the lower bounds.
In contrast, the work in \citep{alacaoglu2024complexity} does not have guarantees without \Cref{assumption: variance_reduction}.

\paragraph{Stochastic objective and stochastic constraints. } This is the most general class, where the existing results come with many assumptions that are not always easy to interpret, similar to the case of stochastic objective and deterministic nonconvex functional constraints described above \citep{li2024stochastic, alacaoglu2024complexity}. The state-of-the-art complexity result is $O(\varepsilon^{-5})$ and is obtained by using the expected Lipschitzness assumption above, by an inexact, double-loop, augmented Lagrangian algorithm in \cite{li2024stochastic} and by a single loop penalty algorithm in \cite{alacaoglu2024complexity}. These results concerning augmented Lagrangian methods all need to use large penalty parameters, which renders them as penalty methods since the dual updates do not contribute to the analysis for ensuring the feasibility. Other approaches for solving this sub-case also require double-loop algorithms and stronger assumptions since they focus on a generic nonconvex constraint \citep{boob2023stochastic, ma2020quadratically}. These works obtain the complexity $O(\varepsilon^{-6})$ since they do not assume expected Lipschitzness.

In conclusion, in this sub-case, none of the existing surveyed results used the structure of linear constraints, which we do in \Cref{sec: rand_sec} to achieve improved complexity guarantees.

\section*{Acknowledgements}
J. Zhang was supported by the MIT School of Engineering Postdoctoral Fellowship Program for Engineering Excellence. 

\bibliography{lit.bib}
\bibliographystyle{icml2025}

\newpage
\appendix
\onecolumn

\section{Proofs for \Cref{sec: main_conv}}\label{app: a}
Let us recall the following definition from \eqref{eq: k_def_main} which will be used extensively in the proofs
    \begin{equation*}
    K(\bx, \by, \bz) = L_\rho(\bx, \by) + \frac{\mu}{2}\|\bx-\bz\|^2.
    \end{equation*}
    With this notation, we have
    \begin{equation*}
    \bu^*(\bx, \by, \bz) = \min_{\bu \in X}K(\bu, \by, \bz) + \frac{\lambda}{2} \| \bu - \bx\|^2.
    \end{equation*}
We also introduce here some parameters that are used throughout, for convenience.
\begin{equation}\label{eq: param_defs_appendix}
\begin{aligned}
\mu &= \max\{2, 4L_f\}, \\
L_K &= L_f + \rho \|A\| + \mu, \\
\lambda &= L_K, \\
\sigma_4 &= \frac{\mu-L_f}{\mu}, \\
\tau &=  \frac{1}{6\lambda^2\sqrt{T}}, \\
\eta &= \min\left\{ \frac{2\mu+\rho\|A\|}{4\|A\|^4}, \frac{\tau}{200\|A\|^2}, \frac{\tau(2\mu+\rho\|A\|^2)}{20\|A\|^2} \right\}, \\
\beta &= \min\left\{ \frac{\tau}{100}, \frac{1}{50\lambda}, \frac{\eta}{36\mu\bar{\sigma}^2} \right\},\\
\gamma_s&=2\mu + \rho\|A\|, \gamma=\frac{(\mu-L_f)\lambda}{\mu-L_f+\lambda}, \gamma_K=\mu-L_f,
\end{aligned}
\end{equation}    
where $\bar{\sigma}$ is defined in \ref{global error}.

\subsection{Proofs for \Cref{lem: vt_ineq}}
    In the next lemma, the first part is using the idea of \cite{davis2019stochastic} to handle bounded variance assumption instead of the restricted bounded stochastic gradient assumption. The second part of the lemma also follows a similar idea as this work, with the exception of the dependence of the changing center point $\bz_t$. This introduces additional issues, since the stochastic gradient in the update of $\bx_{t+1}$ depends on $\bz_t$ whereas the proximal point $\bu^*(\bx_t, \by_{t+1}, \bz_{t+1})$ depends on $\bz_{t+1}$. Our analysis below estimates this additional error and shows it to be in the order of $\|\bz_{t+1}-\bz_t\|^2$, which will be handled later.

    \begin{lemma}Suppose that \Cref{asmp: 1} holds, for the proximal point $\bu^*(\bx_t, \by_{t+1}, \bz_t)$, defined in~\eqref{eq: aeh4} we have the characterization
\begin{equation}\label{eq: axg5}
\bu^*(\bx_t, \by_{t+1}, \bz_{t+1})=\proj_X(\tau\lambda \bx_t + (1-\tau \lambda)\bu^*(\bx_t, \by_{t+1}, \bz_{t+1}) - \tau \nabla_\bx K(\bu^*(\bx_t, \by_{t+1}, \bz_{t+1}), \by_{t+1}, \bz_{t+1})).
\end{equation} 
Moreover, for the sequence $\bx_{t+1}$ calculated as \Cref{alg: sgd}, if $\lambda = L_K=L_f+\rho \|A\|^2+\mu$ and $\tau \leq \frac{1}{6\lambda}$,  we have
      \begin{equation*}
      \mathbb{E}\|\bu^*(\bx_t, \by_{t+1}, \bz_{t+1})-\bx_{t+1}\|^2 \leq (1-\frac{\tau\lambda}{4})\mathbb{E}\|\bu^*(\bx_t, \by_{t+1}, \bz_{t+1})- \bx_t\|^2 +(\tau\mu+2\tau^2\mu^2)\mathbb{E}\|\bz_t-\bz_{t+1}\|^2+\tau^2\sigma^2
      \end{equation*}
    \begin{proof}
From the definition of $\bu^*(\bx_t , \by_{t+1}, \bz_{t+1})$ in \eqref{eq: aeh4}, we have
        \begin{equation*}
           \lambda(\bx_t-\bu^*(\bx_t, \by_{t+1}, \bz_{t+1})) \in  \nabla_\bx K(\bu^*(\bx_t, \by_{t+1}, \bz_{t+1}), \by_{t+1}, \bz_{t+1})+\partial I_X(\bu^*(\bx_t, \by_{t+1}, \bz_{t+1})).
           \end{equation*}
           Multiplying both sides by the step size $\tau$ and rearranging give
           \begin{align*}
&\tau\lambda \bx_t-\tau \nabla_\bx K(\bu^*(\bx_t, \by_{t+1}, \bz_{t+1}), \by_{t+1}, \bz_{t+1})+(1-\tau\lambda)\bu^*(\bx_t, \by_{t+1}, \bz_{t+1}) \\
& \in \bu^*(\bx_t, \by_{t+1}, \bz_{t+1})+ \tau\partial I_X(\bu^*(\bx_t, \by_{t+1}, \bz_{t+1})).
        \end{align*}
Since $(I+\tau\partial I_X)^{-1} = \prox_{I_X} = \proj_{X}$ due to $\partial I_X$ being the normal cone and proximal operator of a normal cone being the projection to the set, we have the first assertion.

We next establish the second assertion.
Using the just established identity \eqref{eq: axg5}, the update rule of $\bx_{t+1}$ and nonexpansiveness of the projection, we derive
\begin{align*}
&\| \bu^*(\bx_t, \by_{t+1}, \bz_{t+1}) - \bx_{t+1} \|^2 \\
&\leq \| \tau\lambda \bx_t + (1-\tau \lambda)\bu^*(\bx_t, \by_{t+1}, \bz_{t+1}) - \tau \nabla_\bx K(\bu^*(\bx_t, \by_{t+1}, \bz_{t+1}), \by_{t+1}, \bz_{t+1}) - [\bx_t -\tau G(\bx_t, \by_{t+1}, \bz_{t}, \xi_t)] \|^2.
\end{align*} 
We add and subtract $\nabla_{\bx} K(\bx_t, \by_{t+1}, \bz_{t})$ inside the squared norm, expand and take conditional expectation to obtain
\begin{align*}
&\mathbb{E}_t \| \bu^*(\bx_t, \by_{t+1}, \bz_{t+1}) - \bx_{t+1} \|^2 \\
&= \| (1-\tau \lambda)(\bu^*(\bx_t, \by_{t+1}, \bz_{t+1})-\bx_t) - \tau\nabla_\bx K(\bu^*(\bx_t, \by_{t+1}, \bz_{t+1}), \by_{t+1}, \bz_{t+1}) + \tau \nabla_\bx K(\bx_t, \by_{t+1}, \bz_{t}) \|^2 \\
&\quad + \tau^2\mathbb{E}_t \|G(\bx_t, \by_{t+1}, \bz_{t}, \xi_t) - \nabla_\bx K(\bx_t, \by_{t+1}, \bz_{t})\|^2.
\end{align*}
where the cross term disappeared because 
\begin{equation*}
\mathbb{E}_t [G(\bx_t, \by_{t+1}, \bz_{t}, \xi_t)] = \nabla_\bx K(\bx_t, \by_{t+1}, \bz_{t})
\end{equation*}
and $\bx_t, \by_{t+1}, \bz_{t+1}, \bu^*(\bx_t, \by_{t+1}, \bz_{t+1})$ are deterministic under the conditioning since $\bz_{t+1}$ only depends on $\bx_t$.

The second term on the right-hand side is trivially bounded by the oracle assumptions, that is,
\begin{equation*}
\mathbb{E}_t \| G(\bx_t, \by_{t+1}, \bz_{t+1}, \xi_t) - \nabla_\bx K(\bx_t, \by_{t+1}, \bz_{t+1}) \|^2 \leq \sigma^2.
\end{equation*}
For the first term, we further estimate as
\begin{align}
&\| (1-\tau \lambda)(\bu^*(\bx_t, \by_{t+1}, \bz_{t+1})-\bx_t) - \tau\nabla_\bx K(\bu^*(\bx_t, \by_{t+1}, \bz_{t+1}), \by_{t+1}, \bz_{t+1}) + \tau \nabla_\bx K(\bx_t, \by_{t+1}, \bz_{t}) \|^2 \notag \\
&\leq (1-\tau\lambda)^2 \| \bu^*(\bx_t, \by_{t+1}, \bz_{t+1})-\bx_t\|^2 \notag \\
&\quad+\tau(1-\tau\lambda)\langle \bu^*(\bx_t, \by_{t+1}, \bz_{t+1})-\bx_t,   \nabla_\bx K(\bx_t, \by_{t+1}, \bz_{t}) - \nabla_\bx K(\bu^*(\bx_t, \by_{t+1}, \bz_{t+1}), \by_{t+1}, \bz_{t+1}) \rangle \notag \\
&\quad+ \tau^2 \| \nabla_\bx K(\bx_t, \by_{t+1}, \bz_{t}) - \nabla_\bx K(\bu^*(\bx_t, \by_{t+1}, \bz_{t+1}), \by_{t+1}, \bz_{t+1})\|^2.\label{eq: sxp5}
\end{align}
Next, we turn to estimating 
\begin{align*}
&\| \nabla_\bx K(\bx_t, \by_{t+1}, \bz_{t}) - \nabla_\bx K(\bu^*(\bx_t, \by_{t+1}, \bz_{t+1}), \by_{t+1}, \bz_{t+1}) \| \\
&\leq \| \nabla_\bx K(\bx_t, \by_{t+1}, \bz_{t}) - \nabla_\bx K(\bx_t, \by_{t+1}, \bz_{t+1}) \| + \| \nabla_\bx K(\bx_t, \by_{t+1}, \bz_{t+1}) - \nabla_\bx K(\bu^*(\bx_t, \by_{t+1}, \bz_{t+1}), \by_{t+1}, \bz_{t+1}) \|.
\end{align*}
Note that, by definition, we have
\begin{equation*}
\nabla_\bx K(\bx_t, \by_{t+1}, \bz_{t}) - \nabla_\bx K(\bx_t, \by_{t+1}, \bz_{t+1}) = \mu (\bz_{t+1} - \bz_t).
\end{equation*}
Using this and the Lipschitzness of $\nabla_\bx K(\cdot, \by_{t+1}, \bz_{t+1})$, we then obtain
\begin{equation*}
\| \nabla_\bx K(\bx_t, \by_{t+1}, \bz_{t}) - \nabla_\bx K(\bu^*(\bx_t, \by_{t+1}, \bz_{t+1}), \by_{t+1}, \bz_{t+1}) \| \leq \mu\| \bz_{t+1} - \bz_t \| + L_K\| \bu^*(\bx_t, \by_{t+1}, \bz_{t+1})-\bx_t \|.
\end{equation*}
Plug this bound into the second term in the right-hand side of \eqref{eq: sxp5} after using Cauchy-Schwarz and Young's inequalities, we get
\begin{align*}
&\tau(1-\tau\lambda)\langle \bu^*(\bx_t, \by_{t+1}, \bz_{t+1})-\bx_t,   \nabla_\bx K(\bx_t, \by_{t+1}, \bz_{t}) - \nabla_\bx K(\bu^*(\bx_t, \by_{t+1}, \bz_{t+1}), \by_{t+1}, \bz_{t+1}) \rangle \\
&\leq \tau(1-\tau\lambda)\| \bu^*(\bx_t, \by_{t+1}, \bz_{t+1})-\bx_t \| (\mu\| \bz_{t+1} - \bz_t \| + L_K\| \bu^*(\bx_t, \by_{t+1}, \bz_{t+1})-\bx_t \|) \\
&\leq \tau(1-\tau\lambda)(L_K+\mu/2) \| \bu^*(\bx_t, \by_{t+1}, \bz_{t+1})-\bx_t \|^2 + \frac{\tau(1-\tau\lambda)\mu}{2}\| \bz_{t+1} - \bz_t\|^2.
\end{align*}
Using the last two inequalities in \eqref{eq: sxp5}, we obtain
\begin{align*}
&\| (1-\tau \lambda)(\bu^*(\bx_t, \by_{t+1}, \bz_{t+1})-\bx_t) - \tau\nabla_\bx K(\bu^*(\bx_t, \by_{t+1}, \bz_{t+1}), \by_{t+1}, \bz_{t+1}) + \tau \nabla_\bx K(\bx_t, \by_{t+1}, \bz_{t}) \|^2 \notag \\
&\leq [(1-\tau\lambda)^2 + \tau(1-\tau\lambda)(L_K+\mu) + 2\tau^2L_K^2]\| \bu^*(\bx_t, \by_{t+1}, \bz_{t+1})-\bx_t\|^2 + (\tau(1-\tau\lambda)\mu + 2\tau^2 \mu^2)\|\bz_{t+1} - \bz_t\|^2.
\end{align*}
We estimate the coefficient of the first term. First, note that $\tau \leq \frac{1}{\lambda}$ and $\mu \leq L_K = \lambda$. As a result, we have
\begin{align*}
(1-\tau\lambda)^2 + \tau(1-\tau\lambda)(L_K+\mu/2) + 2\tau^2L_K^2 &\leq 1-2\tau\lambda+\tau^2\lambda^2 + \frac{3\tau L_K}{2} - \frac{3\tau^2\lambda L_K}{2} + 2\tau^2L_K^2 \\
&\leq 1 - \frac{\tau\lambda}{2} + \tau^2\lambda^2 + \frac{\tau^2 L_K^2}{2}  \\
&\leq 1- \frac{\tau\lambda}{4},
\end{align*}
since $\tau \leq \frac{1}{6\lambda}$.

Finally, since $\tau(1-\tau\lambda)\mu + 2\tau^2\mu^2 \leq \tau\mu + 2\tau^2\mu^2$, the proof is completed after taking full expectation of the resulting equality.
    \end{proof}
\end{lemma}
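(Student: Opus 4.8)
The plan is to prove the two assertions in sequence, with the projection identity \eqref{eq: axg5} doing the heavy lifting for the recursion. First I would write the first-order optimality condition for the strongly convex subproblem defining $\bu^*(\bx_t, \by_{t+1}, \bz_{t+1})$ in \eqref{eq: aeh4}: since $\bu^*$ minimizes $K(\cdot, \by_{t+1}, \bz_{t+1}) + \frac{\lambda}{2}\|\cdot - \bx_t\|^2$ over $X$, we have $\lambda(\bx_t - \bu^*) \in \nabla_\bx K(\bu^*, \by_{t+1}, \bz_{t+1}) + \partial I_X(\bu^*)$. Multiplying by $\tau$, rearranging so that $\bu^*$ appears alone inside a resolvent, and using that $(I + \tau\partial I_X)^{-1} = \proj_X$ (the proximal operator of the normal cone is the projection onto $X$) yields \eqref{eq: axg5}.

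For the recursion, the key idea is to compare the two projections defining $\bx_{t+1}$ and $\bu^*$ and exploit nonexpansiveness. I would write $\bx_{t+1} = \proj_X(\bx_t - \tau G(\bx_t, \by_{t+1}, \bz_t, \xi_t))$ and use \eqref{eq: axg5} for $\bu^*$, so that $\|\bu^* - \bx_{t+1}\|^2$ is bounded by the squared norm of the difference of the two arguments, which equals $(1-\tau\lambda)(\bu^* - \bx_t) - \tau\nabla_\bx K(\bu^*, \by_{t+1}, \bz_{t+1}) + \tau G$. The crucial step is to add and subtract $\nabla_\bx K(\bx_t, \by_{t+1}, \bz_t)$ — the quantity that $G$ is unbiased for — so that the expression splits into a deterministic part and the zero-mean noise $\tau(G - \nabla_\bx K(\bx_t, \by_{t+1}, \bz_t))$. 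Taking conditional expectation $\mathbb{E}_t$ kills the cross term, because $\bz_{t+1}$ depends only on $\bx_t, \bz_t$ (by the modified update $\bz_{t+1} = \bz_t + \beta(\bx_t - \bz_t)$) and is therefore deterministic under the conditioning, while the noise term contributes at most $\tau^2\sigma^2$ by the variance bound in \eqref{eq: stoc_oracle}.

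The main obstacle, which is also the chief novelty over the bounded-variance SGD analysis of \cite{davis2019stochastic}, is controlling the gradient difference $\nabla_\bx K(\bx_t, \by_{t+1}, \bz_t) - \nabla_\bx K(\bu^*, \by_{t+1}, \bz_{t+1})$ in the deterministic part, despite the \emph{mismatch} between the proximal center $\bz_{t+1}$ inside $\bu^*$ and the center $\bz_t$ inside the oracle. I would handle this by a triangle inequality that inserts $\nabla_\bx K(\bx_t, \by_{t+1}, \bz_{t+1})$: since the $\bz$-dependence of $K$ is a simple quadratic, one has the exact identity $\nabla_\bx K(\bx_t, \by_{t+1}, \bz_t) - \nabla_\bx K(\bx_t, \by_{t+1}, \bz_{t+1}) = \mu(\bz_{t+1} - \bz_t)$, and the remaining difference is bounded by $L_K\|\bu^* - \bx_t\|$ via $L_K$-smoothness of $\bx \mapsto K(\bx, \by_{t+1}, \bz_{t+1})$. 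This delivers $\|\nabla_\bx K(\bx_t, \by_{t+1}, \bz_t) - \nabla_\bx K(\bu^*, \by_{t+1}, \bz_{t+1})\| \leq \mu\|\bz_{t+1} - \bz_t\| + L_K\|\bu^* - \bx_t\|$, which is exactly what produces the $\|\bz_{t+1}-\bz_t\|^2$ error term absent from \cite{davis2019stochastic}.

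Finally, I would expand the square of the deterministic part, applying Cauchy--Schwarz and Young's inequality to the cross term so that the $\|\bz_{t+1} - \bz_t\|$ factor separates out with coefficient of order $\tau\mu + \tau^2\mu^2$, while the coefficient of $\|\bu^* - \bx_t\|^2$ collects to $(1-\tau\lambda)^2 + \tau(1-\tau\lambda)(L_K + \mu/2) + 2\tau^2L_K^2$. Using $\lambda = L_K$, $\mu \leq L_K$, and the step-size restriction $\tau \leq \frac{1}{6\lambda}$, this coefficient simplifies to at most $1 - \frac{\tau\lambda}{4}$; combined with the noise bound $\tau^2\sigma^2$, this gives the claimed inequality after taking full expectation. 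The only quantitative care required is in the coefficient arithmetic, which is routine given the parameter constraints, so I expect the conceptual difficulty to reside entirely in the center-mismatch handling described above.
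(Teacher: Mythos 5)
Your proposal is correct and follows essentially the same route as the paper's proof: the resolvent identity $(I+\tau\partial I_X)^{-1}=\proj_X$ for the first assertion, then nonexpansiveness, adding and subtracting $\nabla_\bx K(\bx_t,\by_{t+1},\bz_t)$ to isolate the zero-mean noise, and the triangle-inequality insertion of $\nabla_\bx K(\bx_t,\by_{t+1},\bz_{t+1})$ to split the center mismatch into the exact term $\mu(\bz_{t+1}-\bz_t)$ plus an $L_K$-smoothness bound. The coefficient bookkeeping you outline matches the paper's, so no changes are needed.
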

\begin{lemma}\label{lem: swo3}
Let \Cref{asmp: 1} hold, then if $\lambda = L_K$ and $\tau \leq \frac{1}{6\lambda}$ we have
    \begin{align}
         \mathbb{E}\varphi_{1/\lambda}(\bx_{t+1}, \by_{t+1}, \bz_{t+1}) &\leq \mathbb{E}\varphi_{1/\lambda}(\bx_{t}, \by_{t+1}, \bz_{t+1})-\frac{\tau\lambda^2}{16}\mathbb{E}\|\bu^*(\bx_t, \by_{t+1}, \bz_{t})-\bx_t\|^2 \notag \\
         &\quad+(\lambda \tau \mu+2\lambda \tau^2\mu^2+\frac{\tau\lambda^2\mu^2}{8\gamma_s^2})\mathbb{E}\|\bz_t-\bz_{t+1}\|^2+\lambda \tau^2\sigma^2,
    \end{align}
    where $\gamma_s=2\mu+\rho \|A\|$. 
    \begin{proof}
    By the definition of $\varphi_{1/\lambda}$ and $\bu^*(\bx, \by_{t+1}, \bz_{t+1})$, we have
        \begin{align}
            \mathbb{E}\varphi_{1/\lambda}(\bx_{t+1}, \by_{t+1}, \bz_{t+1})&\leq \mathbb{E} K(\bu^*(\bx_t, \by_{t+1}, \bz_{t+1}), \by_{t+1}, \bz_{t+1})+\frac{\lambda}{2}\mathbb{E} \|\bu^*(\bx_{t}, \by_{t+1}, \bz_{t+1})-\bx_{t+1}\|^2\notag\\
                &\leq \mathbb{E} K(\bu^*(\bx_t, \by_{t+1}, \bz_{t+1}), \by_{t+1}, \bz_{t+1})+\left(\frac{\lambda}{2}-\frac{\tau\lambda^2}{8}\right)\mathbb{E}\|\bu^*(\bx_t, \by_{t+1}, \bz_{t+1})-\bx_t\|^2 \notag\\
                &\quad+(\lambda \tau \mu+2\lambda \tau^2\mu^2)\mathbb{E}\|\bz_t-\bz_{t+1}\|^2+\lambda \tau^2\sigma^2\notag\\
                &=\mathbb{E}\varphi_{1/\lambda}(\bx_{t}, \by_{t+1}, \bz_{t+1})-\frac{\tau\lambda^2}{8} \mathbb{E} \|\bu^*(\bx_t, \by_{t+1}, \bz_{t+1})-\bx_t\|^2\notag \\
                &\quad+(\lambda \tau \mu+2\lambda \tau^2\mu^2)\mathbb{E}\|\bz_t-\bz_{t+1}\|^2+\lambda \tau^2\sigma^2.\label{eq: aro3}
        \end{align}
We next bound the second term on the right-hand side by using
        \begin{align}
 \|\bu^*(\bx_t, \by_{t+1}, \bz_{t+1})-\bx_t\|^2&=\|\bu^*(\bx_t, \by_{t+1}, \bz_{t+1})-\bu^*(\bx_t, \by_{t+1}, \bz_{t})+\bu^*(\bx_t, \by_{t+1}, \bz_{t})-\bx_t\|^2\notag \\
            &\geq \frac{1}{2}\|\bu^*(\bx_t, \by_{t+1}, \bz_{t})-\bx_t\|^2-\|\bu^*(\bx_t, \by_{t+1}, \bz_{t+1})-\bu^*(\bx_t, \by_{t+1}, \bz_{t}\|^2\notag \\
            &\geq \frac{1}{2}\|\bu^*(\bx_t, \by_{t+1}, \bz_{t})-\bx_t\|^2-\frac{p^2}{\gamma_s^2}\|\bz_t-\bz_{t+1}\|^2,\\
        \end{align}
        where the last line used~\eqref{s^*(x,y,z')-s^*(x,y,z)}.
        
        We substitute the last inequality into  \eqref{eq: aro3} to conclude.
    \end{proof}
\end{lemma}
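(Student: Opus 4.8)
The plan is to exploit that $\varphi_{1/\lambda}(\cdot,\by,\bz)$ is itself defined as a minimum over $\bu\in X$, so \emph{any} feasible candidate produces an upper bound; the right candidate is the prox point of the current iterate. Concretely, since
\[
\varphi_{1/\lambda}(\bx_{t+1},\by_{t+1},\bz_{t+1}) = \min_{\bu\in X}\Big\{K(\bu,\by_{t+1},\bz_{t+1})+\tfrac{\lambda}{2}\|\bu-\bx_{t+1}\|^2\Big\},
\]
I would substitute $\bu=\bu^*(\bx_t,\by_{t+1},\bz_{t+1})$ to obtain
\[
\varphi_{1/\lambda}(\bx_{t+1},\by_{t+1},\bz_{t+1}) \leq K\big(\bu^*(\bx_t,\by_{t+1},\bz_{t+1}),\by_{t+1},\bz_{t+1}\big) + \tfrac{\lambda}{2}\|\bu^*(\bx_t,\by_{t+1},\bz_{t+1})-\bx_{t+1}\|^2.
\]
This collapses the whole estimate onto the single quantity $\|\bu^*(\bx_t,\by_{t+1},\bz_{t+1})-\bx_{t+1}\|^2$, which is exactly what the preceding lemma controls.

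Next I would invoke the preceding lemma (multiplied by $\tfrac{\lambda}{2}$) to replace that term by a contraction $(1-\tfrac{\tau\lambda}{4})$ of $\|\bu^*(\bx_t,\by_{t+1},\bz_{t+1})-\bx_t\|^2$ together with the errors $(\tau\mu+2\tau^2\mu^2)\|\bz_t-\bz_{t+1}\|^2$ and $\tau^2\sigma^2$, bounding the error coefficients loosely to match the stated ones. The key observation is that, because $\bu^*(\bx_t,\by_{t+1},\bz_{t+1})$ is by definition the minimizer of the $\bx_t$-centered problem,
\[
K\big(\bu^*(\bx_t,\by_{t+1},\bz_{t+1}),\by_{t+1},\bz_{t+1}\big) + \tfrac{\lambda}{2}\|\bu^*(\bx_t,\by_{t+1},\bz_{t+1})-\bx_t\|^2 = \varphi_{1/\lambda}(\bx_t,\by_{t+1},\bz_{t+1}),
\]
so the leading terms telescope into $\varphi_{1/\lambda}(\bx_t,\by_{t+1},\bz_{t+1})$ and leave behind the negative term $-\tfrac{\tau\lambda^2}{8}\|\bu^*(\bx_t,\by_{t+1},\bz_{t+1})-\bx_t\|^2$. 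Taking full expectations gives an intermediate inequality of the claimed shape, except that the negative term is still centered at $\bz_{t+1}$ rather than $\bz_t$.

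The last and most delicate step is switching this center back to $\bz_t$, since the statement demands $\|\bu^*(\bx_t,\by_{t+1},\bz_t)-\bx_t\|^2$. I would use the elementary bound $\|a\|^2\geq\tfrac12\|b\|^2-\|a-b\|^2$ with $a=\bu^*(\bx_t,\by_{t+1},\bz_{t+1})-\bx_t$ and $b=\bu^*(\bx_t,\by_{t+1},\bz_t)-\bx_t$, which halves the contraction coefficient to $\tfrac{\tau\lambda^2}{16}$ at the price of the error $\tfrac{\tau\lambda^2}{8}\|\bu^*(\bx_t,\by_{t+1},\bz_{t+1})-\bu^*(\bx_t,\by_{t+1},\bz_t)\|^2$. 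Controlling this requires a Lipschitz estimate of the prox point in its third argument. The inner objective $\bu\mapsto K(\bu,\by_{t+1},\bz)+\tfrac{\lambda}{2}\|\bu-\bx_t\|^2$ is $\gamma_s$-strongly convex with $\gamma_s=2\mu+\rho\|A\|$ exactly because the choice $\lambda=L_K=L_f+\rho\|A\|+\mu$ compensates the $L_f$-weak convexity of $f$, while $\bz$ enters only through $\tfrac{\mu}{2}\|\bu-\bz\|^2$, perturbing the gradient by $\mu(\bz-\bz')$; standard sensitivity of strongly convex minimizers then yields $\|\bu^*(\bx,\by,\bz')-\bu^*(\bx,\by,\bz)\|\leq\tfrac{\mu}{\gamma_s}\|\bz-\bz'\|$, which converts the error into $\tfrac{\tau\lambda^2\mu^2}{8\gamma_s^2}\|\bz_t-\bz_{t+1}\|^2$ and completes the proof. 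I expect this center-switching step, and in particular correctly identifying the strong-convexity modulus $\gamma_s$, to be the main obstacle; everything else is routine bookkeeping using Young's inequality and the step-size constraint $\tau\leq\tfrac{1}{6\lambda}$.
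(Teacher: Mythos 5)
Your proposal is correct and follows essentially the same route as the paper: upper-bound $\varphi_{1/\lambda}(\bx_{t+1},\cdot,\cdot)$ by evaluating at the candidate $\bu^*(\bx_t,\by_{t+1},\bz_{t+1})$, invoke the preceding one-step contraction lemma to telescope into $\varphi_{1/\lambda}(\bx_t,\by_{t+1},\bz_{t+1})$ minus $\frac{\tau\lambda^2}{8}\|\bu^*(\bx_t,\by_{t+1},\bz_{t+1})-\bx_t\|^2$, and then switch the prox center from $\bz_{t+1}$ to $\bz_t$ via $\|a\|^2\geq\frac12\|b\|^2-\|a-b\|^2$ together with the $\frac{\mu}{\gamma_s}$-Lipschitz sensitivity of $\bu^*$ in its third argument. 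You also correctly identify the modulus $\gamma_s=2\mu+\rho\|A\|$ arising from $\lambda=L_K$, which is exactly the bound \eqref{s^*(x,y,z')-s^*(x,y,z)} the paper uses.
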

Since the previous result only allowed us to connect $\varphi_{1/\lambda}(\bx_{t+1}, \by_{t+1}, \bz_{t+1})$ to $\varphi_{1/\lambda}(\bx_{t}, \by_{t+1}, \bz_{t+1})$, we now need to analyze the effect of changing $\by_{t+1}$ and $\bz_{t+1}$ in $\varphi_{1/\lambda}$. The main idea of this lemma is similar to \cite{zhang2022error}, where the difference lies in the fact that our potential involves the Moreau envelope of $K(\bx, \by, \bz)$ whereas the potential of \cite{zhang2022error} involves $K(\bx, \by, \bz)$ hence this work considers the change of the arguments in the function $K$ instead of $\varphi_{1/\lambda}$. Therefore, our proof uses the properties of the Moreau envelope which was not needed in \cite{zhang2022error}.
\begin{lemma}\label{lem: phi_misc}
    Suppose that \Cref{asmp: 1} holds, for $\varphi_{1/\lambda}$ defined in \eqref{eq: varphi_def}, we have that
    \begin{equation*}
        \begin{aligned}
        \varphi_{1/\lambda}(\bx_{t}, \by_t, \bz_t)-\varphi_{1/\lambda}(\bx_{t}, \by_{t+1}, \bz_t)&\geq \langle \by_{t}- \by_{t+1},A\bu^{*}(\bx_{t}, \by_{t}, \bz_{t})-b\rangle +\frac{\gamma_s}{2}\|\bu^{*}(\bx_{t}, \by_{t}, \bz_{t})-\bu^{*}(\bx_{t}, \by_{t+1}, \bz_{t})\|^{2},\\
        \varphi_{1/\lambda}(\bx_{t}, \by_{t+1}, \bz_t)-\varphi_{1/\lambda}(\bx_{t}, \by_{t+1}, \bz_{t+1})&\geq \frac{\mu}{2} \langle \bz_{t+1} - \bz_t, 2\bu^*(\bx_t, \by_{t+1}, \bz_t) - \bz_{t+1} - \bz_t \rangle \\
        &\quad+\frac{\gamma_{s}}{2}\|\bu^{*}(\bx_{t}, \by_{t+1}, \bz_{t+1})-\bu^{*}(\bx_{t}, \by_{t+1}, \bz_{t})\|^{2},
    \end{aligned}
    \end{equation*}
    where $\gamma_s=2\mu+\rho \|A\|$. 

    \begin{proof}
    We first consider the change in $\by$ argument of $\varphi_{1/\lambda}$. By using the definition of $\varphi_{1/\lambda}$, we have
         \begin{align}
        \varphi_{1/\lambda}(\bx_{t}, \by_t, \bz_t)-\varphi_{1/\lambda}(\bx_{t}, \by_{t+1}, \bz_t)&=K(\bu^{*}(\bx_{t}, \by_{t}, \bz_{t}), \by_{t}, \bz_{t})+\frac{\lambda}{2}\|\bx_{t}-\bu^{*}(\bx_{t}, \by_{t}, \bz_{t})\|^{2}\notag \\
        &\quad-K(\bu^{*}(\bx_{t}, \by_{t+1}, \bz_{t}), \by_{t+1}, \bz_{t})-\frac{\lambda}{2}\|\bx_{t}-\bu^{*}(\bx_{t}, \by_{t+1}, \bz_{t})\|^{2}\notag \\
        &=K(\bu^{*}(\bx_{t}, \by_{t}, \bz_{t}), \by_{t}, \bz_{t})-K(\bu^{*}(\bx_{t}, \by_{t}, \bz_{t}), \by_{t+1}, \bz_{t})\notag \\
        &\quad+K(\bu^{*}(\bx_{t}, \by_{t}, \bz_{t}), \by_{t+1}, \bz_{t})+\frac{\lambda}{2}\|\bx_{t}-\bu^{*}(\bx_{t}, \by_{t}, \bz_{t})\|^{2}\notag \\
        &\quad-K(\bu^{*}(\bx_{t}, \by_{t+1}, \bz_{t}), \by_{t+1}, \bz_{t})-\frac{\lambda}{2}\|\bx_{t}-\bu^{*}(\bx_{t}, \by_{t+1}, \bz_{t})\|^{2},\label{eq: mpw4}
        \end{align}
        where the second equality adds and subtracts $K(\bu^{*}(\bx_{t}, \by_{t}, \bz_{t}), \by_{t+1}, \bz_{t})$. 
        
From the definition of $\by_{t+1}$, it trivially follows that
\begin{align*}
K(\bu^{*}(\bx_{t}, \by_{t}, \bz_{t}), \by_{t}, \bz_{t})-K(\bu^{*}(\bx_{t}, \by_{t}, \bz_{t}), \by_{t+1}, \bz_{t}) = \langle \by_{t}-\by_{t+1},A\bu^{*}(\bx_{t}, \by_{t}, \bz_{t})-\bb\rangle.
\end{align*}

Next, we use the property that $K(\cdot, \by_{t+1}, \bz_{t}) + \frac{\lambda}{2} \| \cdot - \bx_t\|^2$ is $\gamma_s$-strongly convex with minimizer $\bu^*(\bx_t, \by_{t+1}, \bz_t)$ to obtain
\begin{align*}
&K(\bu^{*}(\bx_{t}, \by_{t}, \bz_{t}), \by_{t+1}, \bz_{t})+\frac{\lambda}{2}\|\bx_{t}-\bu^{*}(\bx_{t}, \by_{t}, \bz_{t})\|^{2}-K(\bu^{*}(\bx_{t}, \by_{t+1}, \bz_{t}), \by_{t+1}, \bz_{t})-\frac{\lambda}{2}\|\bx_{t}-\bu^{*}(\bx_{t}, \by_{t+1}, \bz_{t})\|^{2} \\
        &\geq \frac{\gamma_s}{2}\|\bu^{*}(\bx_{t}, \by_{t}, \bz_{t})-\bu^{*}(\bx_{t}, \by_{t+1}, \bz_{t})\|^{2}.
\end{align*}
Combining the last two estimates in \eqref{eq: mpw4} gives the first assertion.
        
Next, we analyze the effect of changing the $\bz$ component in $\varphi_{1/\lambda}$. Similar to the proof of the first assertion, we start with the definition of $\varphi_{1/\lambda}$ and then add and subtract $K(\bu^*(\bx_t, \by_{t+1}, \bz_{t+1})$ to obtain
        \begin{align}
            &\varphi_{1/\lambda}(\bx_{t}, \by_{t+1}, \bz_t)-\varphi_{1/\lambda}(\bx_{t}, \by_{t+1}, \bz_{t+1})\notag \\
            &=K(\bu^{*}(\bx_{t}, \by_{t+1}, \bz_{t}), \by_{t+1}, \bz_{t})+\frac{\lambda}{2}\|\bx_{t}-\bu^{*}(\bx_{t}, \by_{t+1}, \bz_{t})\|^{2}\notag \\
            &\quad-K(\bu^{*}(\bx_{t}, \by_{t+1}, \bz_{t+1}), \by_{t+1},\bz_{t+1})-\frac{\lambda}{2}\|\bx_{t}-\bu^{*}(\bx_{t}, \by_{t+1}, \bz_{t+1})\|^{2}\notag \\
            &=K(\bu^{*}(\bx_{t}, \by_{t+1}, \bz_{t}), \by_{t+1}, \bz_{t})-K(\bu^{*}(\bx_{t}, \by_{t+1}, \bz_{t}), \by_{t+1}, \bz_{t+1})\notag \\
            &\quad +K(\bu^{*}(\bx_{t}, \by_{t+1}, \bz_{t}), \by_{t+1},\bz_{t+1})+\frac{\lambda}{2}\|\bx_{t}-\bu^{*}(\bx_{t}, \by_{t+1}, \bz_{t})\|^{2}\notag \\
            &\quad-K(\bu^{*}(\bx_{t}, \by_{t+1}, \bz_{t+1}), \by_{t+1}, \bz_{t+1})-\frac{\lambda}{2}\|\bx_{t}- \bu^{*}(\bx_{t}, \by_{t+1}, \bz_{t+1})\|^{2}.\label{eq: swp4}
        \end{align}
        First, by definition, of $K$, it trivially follows that
\begin{align*}
K(\bu^{*}(\bx_{t}, \by_{t+1}, \bz_{t}), \by_{t+1}, \bz_{t})-K(\bu^{*}(\bx_{t}, \by_{t+1}, \bz_{t}), \by_{t+1}, \bz_{t+1}) &= \frac{\mu}{2}\|\bu^{*}(\bx_{t}, \by_{t+1}, \bz_{t})-\bz_{t}\|^{2}\\
&\quad-\frac{\mu}{2}\|\bu^{*}(\bx_{t}, \by_{t+1}, \bz_{t})-\bz_{t+1}\|^{2}.
\end{align*}
For the remaining terms in the right-hand side, we again use that $K(\cdot, \by_{t+1}, \bz_{t+1}) + \frac{\lambda}{2} \| \cdot - \bx_t\|^2$ is $\gamma_s$-strongly convex with minimizer $\bu^*(\bx_t, \by_{t+1}, \bz_{t+1})$ to deduce
\begin{align*}
&K(\bu^{*}(\bx_{t}, \by_{t+1}, \bz_{t}), \by_{t+1},\bz_{t+1})+\frac{\lambda}{2}\|\bx_{t}-\bu^{*}(\bx_{t}, \by_{t+1}, \bz_{t})\|^{2}\\
&\quad-K(\bu^{*}(\bx_{t}, \by_{t+1}, \bz_{t+1}), \by_{t+1}, \bz_{t+1})-\frac{\lambda}{2}\|\bx_{t}- \bu^{*}(\bx_{t}, \by_{t+1}, \bz_{t+1})\|^{2} \\
            &\geq \frac{\gamma_{s}}{2}\|\bu^{*}(\bx_{t}, \by_{t+1}, \bz_{t+1})-\bu^{*}(\bx_{t}, \by_{t+1}, \bz_{t})\|^{2}.
\end{align*}
Plugging in the last two estimates in \eqref{eq: swp4} gives the second assertion.
    \end{proof}
\end{lemma}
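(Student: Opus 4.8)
The plan is to prove both inequalities by the same two-ingredient argument: represent one of the two Moreau envelopes \emph{exactly} at its own minimizer, and lower bound the other using the $\gamma_s$-strong convexity of the inner objective. The structural facts I would rely on are all elementary: the map $\bu \mapsto K(\bu, \by, \bz) + \frac{\lambda}{2}\|\bu - \bx\|^2$ is $\gamma_s$-strongly convex in $\bu$ (since $f$ is $L_f$-weakly convex and $\lambda = L_K = L_f + \rho\|A\| + \mu$ gives modulus $\lambda - L_f + \mu = \gamma_s$), the Lagrangian $K(\bu, \cdot, \bz)$ is affine in $\by$, and $K(\bu, \by, \cdot)$ depends on $\bz$ only through the quadratic $\frac{\mu}{2}\|\bu - \bz\|^2$.

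For the first inequality, set $\bu^*_t = \bu^*(\bx_t, \by_t, \bz_t)$ and note that by definition of the minimizer one has the \emph{exact} identity $\varphi_{1/\lambda}(\bx_t, \by_t, \bz_t) = K(\bu^*_t, \by_t, \bz_t) + \frac{\lambda}{2}\|\bu^*_t - \bx_t\|^2$. For the envelope at $\by_{t+1}$, I would apply the quadratic-growth inequality for the $\gamma_s$-strongly convex objective whose minimizer is $\bu^*(\bx_t, \by_{t+1}, \bz_t)$, evaluated at the suboptimal point $\bu^*_t$, giving
\[
K(\bu^*_t, \by_{t+1}, \bz_t) + \tfrac{\lambda}{2}\|\bu^*_t - \bx_t\|^2 \geq \varphi_{1/\lambda}(\bx_t, \by_{t+1}, \bz_t) + \tfrac{\gamma_s}{2}\|\bu^*_t - \bu^*(\bx_t, \by_{t+1}, \bz_t)\|^2.
\]
Subtracting this from the exact identity and using the affinity of $K$ in $\by$, namely $K(\bu^*_t, \by_t, \bz_t) - K(\bu^*_t, \by_{t+1}, \bz_t) = \langle \by_t - \by_{t+1}, A\bu^*_t - \bb\rangle$, yields the first claim.

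For the second inequality I would run the identical scheme with base point $\bu^*(\bx_t, \by_{t+1}, \bz_t)$ and perturbed parameter $\bz_{t+1}$, invoking strong convexity of the objective at $\bz_{t+1}$ (minimizer $\bu^*(\bx_t, \by_{t+1}, \bz_{t+1})$) to produce the quadratic slack $\frac{\gamma_s}{2}\|\cdot\|^2$. The only difference is that the exact difference now comes from the $\bz$-dependence of $K$: since $K(\bu, \by_{t+1}, \bz_t) - K(\bu, \by_{t+1}, \bz_{t+1}) = \frac{\mu}{2}\big(\|\bu - \bz_t\|^2 - \|\bu - \bz_{t+1}\|^2\big)$, I would expand via the identity $\|\bv\|^2 - \|\bw\|^2 = \langle \bv - \bw, \bv + \bw\rangle$ with $\bv = \bu - \bz_t$, $\bw = \bu - \bz_{t+1}$ to obtain $\frac{\mu}{2}\langle \bz_{t+1} - \bz_t, 2\bu^*(\bx_t, \by_{t+1}, \bz_t) - \bz_t - \bz_{t+1}\rangle$, matching the stated inner product.

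I do not expect a serious obstacle here; the one point requiring care is the asymmetry in which minimizer is used where—representing the \emph{base} envelope exactly while bounding the \emph{perturbed} envelope from above by its value at the base minimizer—because reversing these roles flips the sign of the strong-convexity slack and destroys the lower bound. I would also verify once that $\gamma_s = 2\mu + \rho\|A\|$ is a genuine strong-convexity modulus (it is, since the Hessian of the inner objective dominates $(\mu + \lambda - L_f)I = \gamma_s I$), as this constant produces exactly the quadratic terms $\|\bu^*(\bx_t,\by_t,\bz_t) - \bu^*(\bx_t,\by_{t+1},\bz_t)\|^2$ and $\|\bu^*(\bx_t,\by_{t+1},\bz_{t+1}) - \bu^*(\bx_t,\by_{t+1},\bz_t)\|^2$ that are later needed to absorb error terms in the potential-function descent.
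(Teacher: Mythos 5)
Your proposal is correct and follows essentially the same route as the paper's proof: both arguments represent the base envelope exactly at its own minimizer, apply the $\gamma_s$-quadratic-growth inequality of the inner objective at the perturbed parameter evaluated at that (suboptimal) point, and compute the exact difference of $K$ across the parameter change using affinity in $\by$ (resp.\ the quadratic dependence on $\bz$). Your check that $\lambda - L_f + \mu = 2\mu + \rho\|A\| = \gamma_s$ under $\lambda = L_K$ matches the paper's \Cref{fact: params}, and your remark on the asymmetry of roles is exactly the point the paper's add-and-subtract decomposition encodes.
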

\begin{corollary}\label{cor: asv4}
    Suppose that \Cref{asmp: 1} holds, for $\varphi_{1/\lambda}$ defined in \eqref{eq: varphi_def}, we have that
    \begin{equation*}
        \begin{aligned}
        \varphi_{1/\lambda}(\bx_{t}, \by_t, \bz_t) - \varphi_{1/\lambda}(\bx_{t+1}, \by_{t+1}, \bz_{t+1}) &\geq \frac{\tau\lambda^2}{16}\mathbb{E}\|\bu^*(\bx_t, \by_{t+1}, \bz_{t})-\bx_t\|^2 \notag \\
         &\quad-(\lambda \tau \mu+2\lambda \tau^2\mu^2+\frac{\tau\lambda^2\mu^2}{8\gamma_s^2})\mathbb{E}\|\bz_t-\bz_{t+1}\|^2-\lambda \tau^2\sigma^2\\
        &\quad-\eta \langle A\bx_t - \bb,A\bu^{*}(\bx_{t}, \by_{t}, \bz_{t})-b\rangle \\
        &\quad
 +\frac{\mu}{2} \langle \bz_{t+1} - \bz_t, 2\bu^*(\bx_t, \by_{t+1}, \bz_t) - \bz_{t+1} - \bz_t \rangle,
    \end{aligned}
    \end{equation*}
    where $\gamma_s=2\mu+\rho \|A\|$.
\end{corollary}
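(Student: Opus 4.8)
The plan is to telescope the left-hand side across the three coordinate changes that occur in one iteration of \Cref{alg: sgd} — first in the dual variable $\by$, then in the proximal center $\bz$, and finally in the primal variable $\bx$ — and to lower bound each piece with a result already in hand. Concretely, I would insert the two intermediate points and write
\begin{align*}
\varphi_{1/\lambda}(\bx_t, \by_t, \bz_t) - \varphi_{1/\lambda}(\bx_{t+1}, \by_{t+1}, \bz_{t+1}) &= \big[\varphi_{1/\lambda}(\bx_t, \by_t, \bz_t) - \varphi_{1/\lambda}(\bx_t, \by_{t+1}, \bz_t)\big] \\
&\quad + \big[\varphi_{1/\lambda}(\bx_t, \by_{t+1}, \bz_t) - \varphi_{1/\lambda}(\bx_t, \by_{t+1}, \bz_{t+1})\big] \\
&\quad + \big[\varphi_{1/\lambda}(\bx_t, \by_{t+1}, \bz_{t+1}) - \varphi_{1/\lambda}(\bx_{t+1}, \by_{t+1}, \bz_{t+1})\big].
\end{align*}
The whole point of this ordering is that each bracket matches \emph{exactly} the transition analyzed in one of the preceding results.

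For the first two brackets I would invoke \Cref{lem: phi_misc} directly. Its first inequality lower bounds the $\by$-transition by $\langle \by_t - \by_{t+1}, A\bu^*(\bx_t, \by_t, \bz_t) - \bb\rangle$ plus a nonnegative strong-convexity term, and its second inequality lower bounds the $\bz$-transition by $\frac{\mu}{2}\langle \bz_{t+1} - \bz_t, 2\bu^*(\bx_t, \by_{t+1}, \bz_t) - \bz_{t+1} - \bz_t\rangle$ plus another nonnegative term. For the third bracket I would simply rearrange \Cref{lem: swo3}, which upper bounds $\varphi_{1/\lambda}(\bx_{t+1}, \by_{t+1}, \bz_{t+1})$ in terms of $\varphi_{1/\lambda}(\bx_t, \by_{t+1}, \bz_{t+1})$; moving terms across converts it into the desired lower bound on the $\bx$-transition, carrying precisely the coefficients $\frac{\tau\lambda^2}{16}$, $(\lambda\tau\mu + 2\lambda\tau^2\mu^2 + \frac{\tau\lambda^2\mu^2}{8\gamma_s^2})$, and the variance term $\lambda\tau^2\sigma^2$ that appear in the claim.

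The final two steps are cosmetic. First, I would substitute the dual update $\by_{t+1} = \by_t + \eta(A\bx_t - \bb)$, so that $\by_t - \by_{t+1} = -\eta(A\bx_t - \bb)$ and the inner product from the first bracket becomes exactly $-\eta\langle A\bx_t - \bb, A\bu^*(\bx_t, \by_t, \bz_t) - \bb\rangle$. Second, I would discard the two strong-convexity squared-norm terms, which are nonnegative and hence only reinforce a lower bound. Summing the three bracketed estimates then reproduces the stated inequality.

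I expect no genuine obstacle here, since this is a combination result; the only care needed is bookkeeping, namely ensuring the chained brackets keep the fixed arguments consistent (the $\bz$-transition in \Cref{lem: phi_misc} is taken at the already-updated $\by_{t+1}$, and the $\bx$-transition in \Cref{lem: swo3} at $(\by_{t+1}, \bz_{t+1})$). The one subtlety worth flagging is that \Cref{lem: swo3} is already stated in expectation and supplies the $\lambda\tau^2\sigma^2$ term, whereas \Cref{lem: phi_misc} is a deterministic pathwise bound, so I would take expectations of the latter before adding in order to keep the combined statement consistent.
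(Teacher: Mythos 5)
Your proposal is correct and matches the paper's own proof, which likewise sums \Cref{lem: swo3} with the two inequalities of \Cref{lem: phi_misc}, substitutes the dual update $\by_{t+1}-\by_t=\eta(A\bx_t-\bb)$, and discards the two nonnegative strong-convexity terms. Your remark about taking expectations of the pathwise bounds in \Cref{lem: phi_misc} before adding is a fair point of care (indeed the corollary as printed omits the expectation on some terms), but it does not change the argument.
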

\begin{proof}
We sum up the results in \Cref{lem: swo3} and \Cref{lem: phi_misc}, plug in the definition of $\by_{t+1}$ and discard two nonnegative terms on the right-hand side to get the result.
\end{proof}
Next, we analyze the rest of the terms appearing in the potential function. This lemma is only using the definition of $d(\by, \bz)$ and $\Psi(\bz)$ and is equivalent to \cite{zhang2022error} and hence we omit its proof. Notably, these bounds are agnostic to the algorithm used to generate the sequences. Note that the only difference is that in the result below, we do not use the definition of $\by_{t+1}$ whereas the proof in \cite{zhang2022error} uses this definition. The rest of the estimations are precisely the same.
\begin{lemma}{\citep[Lemma 3.2, Lemma 3.3]{zhang2020proximal}}\label{lem: swo4}
For the functions $d(\by, \bz)$ and $\Psi(\bz)$ defined in \eqref{eq: def_d} and \eqref{eq: moreau_orig},we have
    \begin{align*} 
        d(\by_{t+1}, \bz_{t+1})-d(\by_t, \bz_t)&\geq \eta \langle A\bx_{t}-\bb, A\bx^*(\by_{t+1}, \bz_t)-\bb \rangle+\frac{\mu}{2}\langle \bz_{t+1}-\bz_{t}, \bz_{t+1}+\bz_{t}-2\bx^*(\by_{t+1},\bz_{t+1}) \rangle,\\
        \Psi(\bz_{t+1})-\Psi(\bz_t)&\leq \mu\langle \bz_{t+1}-\bz_t, \bz_t-\bar{\bx}^*(\bz_t)\rangle+\frac{\mu}{2\sigma_4}\|\bz_t-\bz_{t+1}\|^2,
    \end{align*}
where $\sigma_4$ is defined in \eqref{eq: param_defs_appendix}.
\end{lemma}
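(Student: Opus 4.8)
The plan is to prove the two inequalities separately, since each depends only on the definitions of $d$ and $\Psi$ together with the update $\by_{t+1}=\by_t+\eta(A\bx_t-\bb)$, and neither uses the primal update rule. The unifying tool is a \emph{three-point} (suboptimality) argument: to lower-bound the decrease of a value function $\min_\bx(\cdots)$, I evaluate its objective at a feasible but non-optimal point and compare against the \emph{exact} optimal value at the other iterate. Throughout I use that, since $\mu>L_f$ and the relevant feasible sets are convex, every inner minimization is strongly convex and attains a unique minimizer, so $\bx^*(\by,\bz)$ and $\bar{\bx}^*(\bz)$ are well-defined and the comparisons are legitimate.

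For the bound on $d$, I would split the increment as $d(\by_{t+1},\bz_{t+1})-d(\by_t,\bz_t)=[d(\by_{t+1},\bz_t)-d(\by_t,\bz_t)]+[d(\by_{t+1},\bz_{t+1})-d(\by_{t+1},\bz_t)]$. For the first bracket I use that $\bx^*(\by_{t+1},\bz_t)$ is feasible but suboptimal for the problem defining $d(\by_t,\bz_t)$, so that $d(\by_t,\bz_t)\le L_\rho(\bx^*(\by_{t+1},\bz_t),\by_t)+\frac{\mu}{2}\|\bx^*(\by_{t+1},\bz_t)-\bz_t\|^2$; subtracting from the exact value of $d(\by_{t+1},\bz_t)$ cancels the quadratic-in-$\bz$ terms and leaves $L_\rho(\bx^*(\by_{t+1},\bz_t),\by_{t+1})-L_\rho(\bx^*(\by_{t+1},\bz_t),\by_t)$. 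Since $L_\rho$ is affine in $\by$, this equals $\langle A\bx^*(\by_{t+1},\bz_t)-\bb,\by_{t+1}-\by_t\rangle=\eta\langle A\bx_t-\bb, A\bx^*(\by_{t+1},\bz_t)-\bb\rangle$, the first stated term. For the second bracket I use instead that $\bx^*(\by_{t+1},\bz_{t+1})$ is suboptimal for the $\bz_t$-problem; the $L_\rho$ terms cancel and I obtain $\frac{\mu}{2}\big(\|\bx^*(\by_{t+1},\bz_{t+1})-\bz_{t+1}\|^2-\|\bx^*(\by_{t+1},\bz_{t+1})-\bz_t\|^2\big)$. Expanding the difference of squares via $\|a\|^2-\|b\|^2=\langle a-b,a+b\rangle$ with $a=\bx^*(\by_{t+1},\bz_{t+1})-\bz_{t+1}$ and $b=\bx^*(\by_{t+1},\bz_{t+1})-\bz_t$ gives exactly $\frac{\mu}{2}\langle\bz_{t+1}-\bz_t,\bz_{t+1}+\bz_t-2\bx^*(\by_{t+1},\bz_{t+1})\rangle$.

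For the bound on $\Psi$, I would argue even more directly, now exploiting that the feasible set $X\cap\{A\bx=\bb\}$ is identical for $\Psi(\bz_t)$ and $\Psi(\bz_{t+1})$. Since $\bar{\bx}^*(\bz_t)$ is feasible for the $\bz_{t+1}$-problem, $\Psi(\bz_{t+1})\le f(\bar{\bx}^*(\bz_t))+\frac{\mu}{2}\|\bar{\bx}^*(\bz_t)-\bz_{t+1}\|^2$, while $\Psi(\bz_t)=f(\bar{\bx}^*(\bz_t))+\frac{\mu}{2}\|\bar{\bx}^*(\bz_t)-\bz_t\|^2$ exactly. Subtracting cancels $f$, and expanding the difference of squares and regrouping around $\bz_t-\bar{\bx}^*(\bz_t)$ yields $\Psi(\bz_{t+1})-\Psi(\bz_t)\le \mu\langle\bz_{t+1}-\bz_t,\bz_t-\bar{\bx}^*(\bz_t)\rangle+\frac{\mu}{2}\|\bz_t-\bz_{t+1}\|^2$. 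Since $\sigma_4=\frac{\mu-L_f}{\mu}\in(0,1]$ we have $\frac{\mu}{2}\le\frac{\mu}{2\sigma_4}$, so this already implies the stated inequality. Equivalently, one can recognize $\mu(\bz_t-\bar{\bx}^*(\bz_t))=\nabla\Psi(\bz_t)$ as the gradient of the Moreau envelope of the $L_f$-weakly convex objective with parameter $1/\mu$ and apply the descent lemma for $\Psi$.

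I do not expect a genuine obstacle: the argument is essentially bookkeeping of which iterate solves which subproblem, combined with the polarization identity for squared norms and the affineness of $L_\rho$ in $\by$. The only points requiring care are (i) verifying well-definedness and uniqueness of the inner minimizers via the strong convexity granted by $\mu>L_f$, so the suboptimality comparisons are valid, and (ii) being consistent about the smoothing constant in the definitions of $\Psi$ and $\bar{\bx}^*$ (I take both with $\frac{\mu}{2}$, which is what makes $\sigma_4$ appear and keeps the statement internally consistent). If one prefers the gradient/smoothness route for $\Psi$, the only extra work is to cite the Moreau-envelope gradient formula and to check that the Lipschitz constant of $\nabla\Psi$ is at most $\mu/\sigma_4$ under $\mu>L_f$.
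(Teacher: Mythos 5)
Your proposal is correct. Note that the paper does not prove this lemma itself but imports it from the cited reference (Lemma 3.2 and 3.3 of the prior work), remarking only that it uses nothing beyond the definitions of $d$ and $\Psi$ and the dual update; your argument reconstructs exactly that. For the bound on $d$, your two-bracket splitting with the suboptimality comparison (evaluate the minimand at the \emph{other} iterate's minimizer, use affineness of $L_\rho$ in $\by$ for the first bracket and the polarization identity $\|a\|^2-\|b\|^2=\langle a-b,a+b\rangle$ for the second) is the standard route and matches what the cited lemma does; the paper's Lemma \ref{stochasitc d(y,z)} in the appendix tacitly uses the same inequality. For the bound on $\Psi$, you actually take a shorter path than the source: by comparing $\Psi(\bz_{t+1})$ against the feasible point $\bar\bx^*(\bz_t)$ you get the coefficient $\tfrac{\mu}{2}$ on $\|\bz_{t+1}-\bz_t\|^2$ directly, which is \emph{sharper} than the stated $\tfrac{\mu}{2\sigma_4}$ (since $\sigma_4=\tfrac{\mu-L_f}{\mu}\le 1$); the reference instead goes through $\nabla\Psi(\bz)=\mu(\bz-\bar\bx^*(\bz))$ and the Lipschitz bound $\|\bar\bx^*(\bz)-\bar\bx^*(\bz')\|\le\tfrac{1}{\sigma_4}\|\bz-\bz'\|$ (equation \eqref{x(z)-x(z')}), which is where the $\sigma_4$ comes from. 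Both are valid; yours is more elementary and loses nothing. Your remark about normalizing the smoothing parameter in \eqref{eq: moreau_orig} to $\mu$ rather than $\lambda$ is also the right reading: the paper's subsequent use of $\nabla\Psi(\bz_t)=\mu(\bz_t-\bar\bx^*(\bz_t))$ and the definition of $\bar\bx^*$ confirm that $\Psi$ is meant with the $\tfrac{\mu}{2}$ proximal term.
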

In the next lemma, we will join the previous lemmas and characterize the change in the potential function.
\begin{lemma}[cf. \Cref{lem: vt_ineq}]\label{lem: appendix_descent}
Let Assumption \ref{asmp: 1} hold. By using the parameters \eqref{eq: param_defs_appendix} in Algorithm \ref{alg: sgd}, we obtain
    \begin{equation} \label{desent lemma}
        \mathbb{E}V_t-\mathbb{E}V_{t+1}\geq c_\beta \mathbb{E}\|\bz_{t+1}-\bz_t\|^2+c_\tau\mathbb{E}\|\bu^*(\bx_t, \by_{t+1}, \bz_{t})-\bx_t\|^2+c_\eta \mathbb{E}\|A\bx^*(\by_{t+1}, \bz_t)-\bb\|^2-\lambda \tau^2\sigma^2,
    \end{equation}
    where $c_\beta = \frac{\mu}{50\beta}$, $c_\tau = \frac{7\tau\lambda^2}{400}$, $c_\eta = \frac{\eta}{4}$.
\end{lemma}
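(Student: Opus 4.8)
The plan is to substitute the three building-block estimates into the definition $V_t = \varphi_{1/\lambda}(\bx_t, \by_t, \bz_t) - 2d(\by_t, \bz_t) + 2\Psi(\bz_t)$ and then reorganize the resulting inner products. Writing
\[
V_t - V_{t+1} = \big[\varphi_{1/\lambda}(\bx_t, \by_t, \bz_t) - \varphi_{1/\lambda}(\bx_{t+1}, \by_{t+1}, \bz_{t+1})\big] + 2\big[d(\by_{t+1}, \bz_{t+1}) - d(\by_t, \bz_t)\big] + 2\big[\Psi(\bz_t) - \Psi(\bz_{t+1})\big],
\]
I would lower-bound the first bracket by \Cref{cor: asv4} and the last two by \Cref{lem: swo4} (flipping the sign of the $\Psi$ estimate and scaling both by $2$). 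This immediately produces the clean positive term $\frac{\tau\lambda^2}{16}\|\bu^*(\bx_t, \by_{t+1}, \bz_t) - \bx_t\|^2$, the matching variance term $-\lambda\tau^2\sigma^2$, the negative $\|\bz_{t+1}-\bz_t\|^2$ errors from the corollary, and a collection of inner products in $\by_{t+1}-\by_t$ and $\bz_{t+1}-\bz_t$ that must be recombined.

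For the dual inner products, I would combine $-\eta\langle A\bx_t - \bb, A\bu^*(\bx_t, \by_t, \bz_t) - \bb\rangle$ from the corollary with $2\eta\langle A\bx_t - \bb, A\bx^*(\by_{t+1}, \bz_t) - \bb\rangle$ from the $d$-estimate, after using $\by_{t+1}-\by_t = \eta(A\bx_t - \bb)$. The key algebraic move is to split $A\bx_t - \bb = [A\bx_t - A\bx^*(\by_{t+1}, \bz_t)] + [A\bx^*(\by_{t+1}, \bz_t) - \bb]$, which isolates the target term $\eta\|A\bx^*(\by_{t+1}, \bz_t) - \bb\|^2$ plus cross terms to be absorbed by Young's inequality. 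For the $\bz$-inner products, I would collect all three contributions (from $\varphi_{1/\lambda}$, $d$, and $\Psi$), simplify using $\bz_{t+1} = \bz_t + (\bz_{t+1}-\bz_t)$ together with the algorithmic update $\bz_{t+1}-\bz_t = \beta(\bx_t - \bz_t)$, and extract the dominant positive term $\frac{\mu}{\beta}\|\bz_{t+1}-\bz_t\|^2$ arising from $\mu\langle \bz_{t+1}-\bz_t, \bx_t - \bz_t\rangle$. The surviving cross terms pair $\bz_{t+1}-\bz_t$ against $\bu^*(\bx_t, \by_{t+1}, \bz_t) - \bx_t$ and against $\bx^*(\by_{t+1}, \bz_{t+1}) - \bar{\bx}^*(\bz_t)$.

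The crux, and the step I expect to be the main obstacle, is controlling the cross terms involving the \emph{difference} between the unconstrained proximal point $\bx^*(\by_{t+1}, \bz_{t+1})$ and the constrained proximal point $\bar{\bx}^*(\bz_t)$, as well as the differences among $\bx_t$, $\bu^*(\bx_t, \by_{t+1}, \bz_t)$, and $\bx^*(\by_{t+1}, \bz_t)$. Here I would invoke the global error bound (\Cref{global error}, with constant $\bar\sigma$) to bound $\|\bx^*(\by_{t+1}, \bz_{t+1}) - \bar{\bx}^*(\bz_t)\|$ in terms of the feasibility gap $\|A\bx^*(\by_{t+1}, \bz_t) - \bb\|$ and $\|\bz_{t+1}-\bz_t\|$, so that every cross term can be routed back into the three target squared quantities. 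After applying Young's inequality with suitably small weights, I would close the argument using the explicit parameter orders in \eqref{eq: param_defs_appendix}, crucially that $\beta$ is chosen much smaller than $\tau$ and $\eta$ so the $\frac{\mu}{\beta}\|\bz_{t+1}-\bz_t\|^2$ term dominates all losses poured into $\|\bz_{t+1}-\bz_t\|^2$, to verify that the surviving coefficients are at least $c_\tau = \frac{7\tau\lambda^2}{400}$, $c_\eta = \frac{\eta}{4}$, and $c_\beta = \frac{\mu}{50\beta}$. The bookkeeping is delicate precisely because each Young's split simultaneously erodes all three coefficients, so the constants must be tracked jointly rather than term-by-term.
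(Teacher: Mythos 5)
Your proposal follows essentially the same route as the paper's proof: combining \Cref{cor: asv4} with \Cref{lem: swo4}, isolating $\eta\|A\bx^*(\by_{t+1},\bz_t)-\bb\|^2$ via the split of $A\bx_t-\bb$, extracting $\frac{\mu}{\beta}\|\bz_{t+1}-\bz_t\|^2$ from the update $\bz_{t+1}-\bz_t=\beta(\bx_t-\bz_t)$, routing the cross terms through the Lipschitz estimates of \Cref{error bound} and the global error bound of \Cref{global error}, and closing with Young's inequality and the parameter choices in \eqref{eq: param_defs_appendix}. The outline is correct and matches the paper's argument in all essential steps.
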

\begin{proof}

Combining \Cref{cor: asv4} and \Cref{lem: swo4}, we obtain
\begin{align*}
    \mathbb{E}[V_t]-\mathbb{E}[V_{t+1}]&=\mathbb{E}\varphi_{1/\lambda}(x_t,y_t,z_t)-\mathbb{E}\varphi_{1/\lambda}(x_{t+1},y_{t+1},z_{t+1})+2\mathbb{E}d(y_{t+1},z_{t+1})-2\mathbb{E}d(y_{t},z_{t})+2\Psi(z_t)-2\Psi(z_{t+1})\\
    &\geq \frac{\tau\lambda^2}{16}\mathbb{E}\|\bu^*(\bx_t, \by_{t+1}, \bz_{t})-\bx_t\|^2-(\lambda \tau \mu +2\lambda \tau^2\mu^2+\frac{\tau\lambda^2\mu^2}{8\gamma_s^2})\mathbb{E}\|\bz_t-\bz_{t+1}\|^2-\lambda \tau^2\sigma^2\\
    &\quad -{\mathbb{E}\eta \langle A\bx_t-\bb,A\bu^*(\bx_t, \by_t, \bz_t)-\bb\rangle }+{\frac{\mu}{2}\mathbb{E}\langle \bz_{t+1}-\bz_t, 2\bu^*(\bx_t, \by_{t+1}, \bz_t)- \bz_t- \bz_{t+1}\rangle }\\
    &\quad +2\eta \mathbb{E}\langle A\bx_t-\bb, A\bx^*(\by_{t+1}, \bz_t)-\bb \rangle+\mu\mathbb{E}\langle \bz_{t+1}-\bz_{t}, \bz_{t+1}+\bz_{t}-2\bx^*(\by_{t+1},\bz_{t+1})\rangle\\
    &\quad -2\mu\mathbb{E}\langle \bz_{t+1}-\bz_t, \bz_t-\bar{\bx}^*(\bz_t)\rangle-\frac{\mu}{\sigma_4}\mathbb{E}\|\bz_t-\bz_{t+1}\|^2.
\end{align*}
We next manipulate the terms on the right-hand side.

First, by adding and subtracting $A\bx_t$ on the second argument of the inner product, we get
\begin{equation*}
    -\eta \langle A\bx_t-\bb, A\bu^*(\bx_t, \by_t, \bz_t)-\bb\rangle=
-\eta\|A\bx_t-\bb\|^2-\eta\langle A\bx_t-\bb, A\bu^*(\bx_t, \by_t, \bz_t)-A\bx_t\rangle.
\end{equation*}
Consequently, we have
\begin{align*}
&-\eta \langle A\bx_t-\bb, A\bu^*(\bx_t, \by_t, \bz_t)-\bb\rangle+2\eta \langle A\bx_t-\bb, A\bx^*(\by_{t+1}, \bz_t)-\bb \rangle\\
&=-\eta\|A\bx_t-A\bx^*(\by_{t+1}, \bz_t)\|^2+\eta\|A\bx^*(\by_{t+1}, \bz_t)-\bb\|^2 -\eta\langle A\bx_t-\bb, A\bu^*(\bx_t, \by_t, \bz_t)-A\bx_t\rangle.
\end{align*}
Second, adding and subtracting $2\bx_t$ in the second argument of the inner product gives
\begin{equation*}
\frac{\mu}{2}\langle \bz_{t+1}-\bz_t, 2\bu^*(\bx_t, \by_{t+1}, \bz_t)-\bz_t-\bz_{t+1}\rangle=\frac{\mu}{2}\langle \bz_{t+1}-\bz_t, 2\bu^*(\bx_t, \by_{t+1}, \bz_t)-2\bx_{t}\rangle+\frac{\mu}{2}\langle \bz_{t+1}-\bz_t, 2\bx_{t}-\bz_t-\bz_{t+1}\rangle.
\end{equation*}
We continue estimating the inner products involving $\bz_{t+1} - \bz_t$. Note that $\bz_{t+1} = \bz_t + \beta(\bx_t - \bz_t) \iff 2\bx_t - 2\bz_t = \frac{2}{\beta}(\bz_{t+1} - \bz_t)$
\begin{align*}
\frac{\mu}{2} \langle \bz_{t+1} - \bz_t, 2\bx_t - \bz_t - \bz_{t+1} \rangle &= \frac{\mu}{2} \langle \bz_{t+1} - \bz_t, 2\bx_t - 2\bz_t \rangle +\frac{\mu}{2} \langle \bz_{t+1} - \bz_t, \bz_t - \bz_{t+1} \rangle \\
&= \frac{\mu}{2} \left( \frac{2}{\beta} - 1 \right) \| \bz_t-\bz_{t+1}\|^2 \geq \frac{\mu}{2\beta} \| \bz_t-\bz_{t+1}\|^2,
\end{align*}
where the last inequality is due to $\beta\leq 1$.
Next, we have
\begin{align*}
&\mu\langle \bz_{t+1}-\bz_{t}, \bz_{t+1}+\bz_{t}-2\bx^*(\by_{t+1},\bz_{t+1})\rangle -2\mu\langle \bz_{t+1}-\bz_t, \bz_t-\bar{\bx}^*(\bz_t)\rangle \\
&= \mu\| \bz_{t+1} - \bz_t\|^2 + 2\mu\langle \bz_{t+1} - \bz_t, \bar{\bx}^*(\bz_t) - \bx^*(\by_{t+1}, \bz_{t+1}) \rangle.
\end{align*}
We can use Cauchy-Schwarz, triangle and Young's inequalities on the second term to get
\begin{align*}
\langle \bz_{t+1} - \bz_t, \bar{\bx}^*(\bz_t) - \bx^*(\by_{t+1}, \bz_{t+1}) \rangle &\geq -\| \bz_{t+1} - \bz_t \| (\| \bar{\bx}^*(\bz_t) -\bx^*(\by_{t+1}, \bz_{t})\| +\| \bx^*(\by_{t+1}, \bz_{t}) - \bx^*(\by_{t+1}, \bz_{t+1}) \|) \\
&\geq -\left(\frac{1}{2\zeta} +\frac{1}{\sigma_4} \right) \| \bz_{t+1} - \bz_t\|^2 - \frac{\zeta}{2} \| \bar{\bx}^*(\bz_t) - \bx^*(y_{t+1}, \bz_t)\|^2,
\end{align*}
where the last step also used \eqref{x(y',z)-x(y,z)}.
Consequently, we obtain
\begin{align*}
&\mu\langle \bz_{t+1}-\bz_{t}, \bz_{t+1}+\bz_{t}-2\bx^*(\by_{t+1},\bz_{t+1})\rangle -2\mu\langle \bz_{t+1}-\bz_t, \bz_t-\bar{\bx}^*(\bz_t)\rangle \\
&\geq \left(\mu - \frac{\mu}{\zeta} - \frac{2\mu}{\sigma_4}\right)\| \bz_{t+1} - \bz_t\|^2 - \mu\zeta \| \bar{\bx}^*(\bz_t) - \bx^*(y_{t+1}, \bz_t)\|^2.
\end{align*}
As a result, we get
\begin{align}
    &\mathbb{E}[V_t]-\mathbb{E}[V_{t+1}]\notag \\
    &\geq \frac{\tau\lambda^2}{16}\mathbb{E}\|\bu^*(\bx_t, \by_{t+1}, \bz_{t})-\bx_t\|^2-(\lambda \tau \mu +2\lambda \tau^2\mu^2+\frac{\tau\lambda^2\mu^2}{8\gamma_s^2} + \frac{\mu}{\zeta} + \frac{3\mu}{\sigma_4} - \mu - \frac{\mu}{2\beta})\mathbb{E}\|\bz_t-\bz_{t+1}\|^2-\lambda \tau^2\sigma^2\notag\\
    &\quad -\eta\langle A\bx_t-\bb, A\bu^*(\bx_t, \by_t, \bz_t)-A\bx_t\rangle-\eta\|A\bx_t-A\bx^*(\by_{t+1}, \bz_t)\|^2+\eta\|A\bx^*(\by_{t+1}, \bz_t)-\bb\|^2\notag\\
    &\quad- \mu\zeta \| \bar{\bx}^*(\bz_t) - \bx^*(y_{t+1}, \bz_t)\|^2+\mu\langle \bz_{t+1}-\bz_t, \bu^*(\bx_t, \by_{t+1}, \bz_t)-\bx_{t}\rangle.\label{eq: smk4}
\end{align}
We will now operate on some of terms from the right-hand side of \eqref{eq: smk4}, by using Lemma \ref{lemma of error} and \ref{global error}.

First, we have by Cauchy-Schwarz and Young's inequalities that
\begin{align*}
    &-\eta\langle A\bx_t-\bb, A\bu^*(\bx_t, \by_t, \bz_t)-A\bx_t\rangle \\
    &\geq -\frac{\eta}{4}\|A\bx_t-\bb\|^2-\eta \|A\bu^*(\bx_t, \by_t, \bz_t)-A\bx_t\|^2\\
    &\geq -\frac{\eta}{4} \|A\bx_t-\bb\|^2 -2\eta \| A\bu^*(\bx_t, \by_t, \bz_t) -A\bu^*(\bx_t, \by_{t+1}, \bz_t)\|^2 - 2\eta \| A\bu^*(\bx_t, \by_{t+1}, \bz_t) - A\bx_t\|^2.
\end{align*}
Next, by using the Lipschitzness of $\bu^*(\bx_t, \cdot, \bz_t)$ from \eqref{s^*(x,y,z)-s^*(x,y',z)}, we have
\begin{align*}
\| A\bu^*(\bx_t, \by_t, \bz_t) -A\bu^*(\bx_t, \by_{t+1}, \bz_t)\|^2&\leq \|A\|^2 \| \bu^*(\bx_t, \by_t, \bz_t) -\bu^*(\bx_t, \by_{t+1}, \bz_t)\|^2 \\
&\leq \frac{\|A\|^4}{\gamma_s^2} \| \by_{t} - \by_{t+1}\|^2 \\
&= \frac{\|A\|^4\eta^2}{\gamma_s^2} \| A\bx_t - \bb\|^2,
\end{align*}
where the last step also used the definition of $\by_{t+1}$. Using this estimation along with \eqref{Ax_t-b} gives
\begin{align*}
    &-\eta\langle A\bx_t-\bb, A\bu^*(\bx_t, \by_t, \bz_t)-A\bx_t\rangle \\
    &\geq -(\frac{\eta}{4}+\frac{2\|A\|^4 \eta^3}{\gamma_s^2})\|A\bx_t-\bb\|^2-2\eta \|A\|^2\|\bu^*(\bx_t, \by_{t+1}, \bz_t)-\bx_t\|^2\\
    &\geq -(\frac{\eta \|A\|^2\lambda^2}{2\gamma^2}+\frac{4\|A\|^6 \eta^3 \lambda^2}{\gamma^2 \gamma_s^2}+2\eta \|A\|^2)\|\bu^*(\bx_t, \by_{t+1}, \bz_t)-\bx_t\|^2\\
    &\quad -(\frac{\eta}{2}+\frac{4\|A\|^4\eta^3}{\gamma_s^2})E\|A\bx^*(\by_{t+1}, \bz_t)-\bb\|^2.
\end{align*}
We next have by Young's inequality that for any $\theta>0$:
\begin{align*}
    \mu\langle \bz_{t+1}-\bz_t, \bu^*(\bx_t, \by_{t+1}, \bz_t)-\bx_{t}\rangle &\geq -\frac{\mu}{4\theta}\|\bz_{t+1}-\bz_t\|^2-\theta \mu\|\bu^*(\bx_t, \by_{t+1}, \bz_t)-\bx_{t}\|^2.
\end{align*}
The inequality derived in \eqref{Ax_t-Ax(y_t+1,z_t)} directly implies
\begin{align*}
    -\eta\|A\bx_t-A\bx^*(\by_{t+1}, \bz_t)\|^2\geq -\frac{\eta \|A\|^2\lambda^2}{\gamma^2}\|\bx_t-\bu^*(\bx_t, \by_{t+1}, \bz_{t})\|^2.
\end{align*}
The key global error bound given in \Cref{global error} originally proved in \cite{zhang2022error} results in
\begin{align*}
    -6\mu\beta \|\bx^*(\by_{t+1}, \bz_t)-\bar{\bx}^*(\bz_t)\|^2\geq -6\mu\beta \bar{\sigma}^2\|A\bx^*(\by_{t+1}, \bz_t)-\bb\|^2.
\end{align*}
Combining these estimates lead to
\begin{align}
    &\mathbb{E}[V_t]-\mathbb{E}[V_{t+1}]\geq -(\lambda \tau \mu +2\lambda \tau^2\mu^2+\frac{\tau\lambda^2\mu^2}{8\gamma_s^2} + \frac{\mu}{\zeta} + \frac{3\mu}{\sigma_4} - \mu - \frac{\mu}{2\beta} + \frac{\mu}{4\theta})\mathbb{E}\|\bz_t-\bz_{t+1}\|^2-\lambda \tau^2\sigma^2\notag\\
    &\quad+\left( \frac{\tau\lambda^2}{16} - \frac{3\|A\|^2\lambda^2\eta}{2\gamma^2} - \frac{4\|A\|^6\eta^3\lambda^2}{\gamma_s^2\gamma^2} - 2\eta\|A\|^2 - \mu\theta \right)\mathbb{E}\|\bu^*(\bx_t, \by_{t+1}, \bz_{t})-\bx_t\|^2\notag \\
    &\quad +\left(\frac{\eta}{2} - \frac{4\|A\|^4\eta^3}{\gamma_s^2} - 6\mu\beta\bar{\sigma}^2 \right)\|A\bx^*(\by_{t+1}, \bz_t)-\bb\|^2.\label{eq: smk8}
\end{align}
We now estimate the coefficients inside the parantheses, with straightforward but tedious calculations which follow from the parameter settings. 

First, we estimate the coefficient of $\mathbb{E}\|\bz_t-\bz_{t+1}\|^2$ in \eqref{eq: smk8}:
Let $\mu\geq 4L_f$, we have 
$\sigma_4 \geq \frac{1}{2}$
because $\sigma_4 = \frac{\mu-L_f}{\mu}$. Then letting $\zeta=6\beta, \beta < \frac{1}{30}$, we have
$$\mu-\frac{3\mu}{\sigma_4}\geq -5\mu\geq -\frac{\mu}{6\beta},\quad \frac{\mu}{\zeta}=\frac{\mu}{6\beta}.$$

Therefore,
\begin{equation}
    \frac{\mu}{2\beta}+\mu-\frac{3\mu}{\sigma_4}-\frac{\mu}{\zeta}\geq (\frac{1}{2}-\frac{1}{6}-\frac{1}{6})\frac{\mu}{\beta}\geq\frac{\mu}{6\beta}.
\end{equation}

Hence,
$$ \text{coefficient of\;}\mathbb{E}\|\bz_t-\bz_{t+1}\|\geq -\lambda \tau \mu-2\lambda \tau^2\mu^2-\frac{\tau\lambda^2\mu^2}{8\gamma_s^2}+\frac{\mu}{6\beta}-\frac{\mu}{8\beta}.$$

Let $\eta=\frac{\eta'}{2\|A\|^2}, \theta=2\beta,\eta'\leq\frac{1}{40}$, and $\mu=\max\{2,4L_f\},\lambda=L_K=L_f+\rho \|A\|+\mu, \tau\leq \frac{1}{10\lambda^2}$, and $\gamma_s = \mu-L_f+\gamma$ from \Cref{fact: params}.
We have $-\lambda \tau \mu \geq -\frac{\mu}{10}$ and $-2\lambda \tau^2\mu^2\geq -\frac{\mu}{50}$, then 

$$ \text{coefficient of\;}\mathbb{E}\|\bz_t-\bz_{t+1}\|\geq \frac{\mu}{24\beta}-\frac{\mu}{10}-\frac{\mu}{50}-\tau\lambda^2\frac{\mu^2}{(\mu-L_f+\lambda)^2}.$$

By $\beta \leq 1/30$, we have $\frac{1}{24\beta}-\frac{1}{10}-\frac{1}{50}\geq \frac{1}{30\beta}$. In addition, using $\tau \lambda^2\frac{\mu^2}{(\mu-L_f+\lambda)^2}\leq \tau \lambda^2\leq \frac{1}{10}$, we fanally obtain:
\begin{equation}\label{eq: est_coefficient_z}
\text{coefficient of\;}\mathbb{E}\|\bz_t-\bz_{t+1}\|\geq \frac{\mu}{30\beta}-\frac{1}{10}\overset{\mu\geq 2}{\geq } \frac{\mu}{50\beta}.
\end{equation}

Then we estimate the coefficient of $\mathbb{E}\|\bu^*(\bx_t, \by_{t+1}, \bz_{t})-\bx_t\|^2$ in \eqref{eq: smk8}.

From above assumptions, we can easily get $\gamma=\frac{(p-L_f)\lambda}{p-L_f+\lambda}\geq \frac{1}{2}$ because $\lambda \geq \mu \geq 2$. Moreover, we assume $\eta'\leq \frac{\tau}{40},\frac{\eta'}{\mu-L_f+\lambda}\leq\frac{\tau}{10}, \beta\leq \frac{\tau}{40}$
First, by our new notations, we have
$$\text{coefficient of\;} \mathbb{E}\|\bu^*(\bx_t, \by_{t+1}, \bz_{t+1})-\bx_t\|^2=\frac{\tau\lambda^2}{16}-\frac{3\eta' \lambda^2}{4\gamma^2}-\frac{\eta'^3 \lambda^2}{2\gamma^2\gamma_s^2}-\eta'-2\mu\beta$$

By $\gamma\geq \frac{1}{2}$ and the definition of $\gamma_s$, we have $-\frac{3\eta' \lambda^2}{4\gamma^2}\geq -3\eta'\lambda^2$, $-\frac{\eta'^3 \lambda^2}{2\gamma^2\gamma_s^2}\geq \frac{\eta'^2\lambda^2}{(\mu-L_f+\lambda)^2}$, Then
$$\text{coefficient of\;} \mathbb{E}\|\bu^*(\bx_t, \by_{t+1}, \bz_{t+1})-\bx_t\|^2\geq \frac{\tau\lambda^2}{16}-3\eta' \lambda^2-\frac{2\eta'^3 \lambda^2}{(\mu-L_f+\lambda)^2}-\eta'-2\mu\beta $$

With $2\leq \mu\leq \lambda$, $\eta'\leq \frac{\tau}{100},\frac{\eta'}{\mu-L_f+\lambda}\leq\frac{\tau}{10}, \beta\leq \frac{\tau}{200}$, we can obtain $-3\eta'\lambda^2 \geq -\frac{3\tau\lambda^2}{400}$, $-\frac{2\eta'^3 \lambda^2}{(\mu-L_f+\lambda)^2}\geq -\frac{\lambda^2\tau^2}{400}\geq -\frac{\lambda^2\tau}{400} $, $-\eta'\geq -\frac{\tau}{100}\geq -\frac{\tau \lambda^2}{100}$, $-2\mu \beta \geq \frac{\tau \mu}{50}\overset{\mu \leq \lambda}{\geq}-\frac{\tau \lambda}{50}\geq \frac{\tau \lambda^2}{100} $. Hence,
\begin{equation} \label{eq: est_coefficient_u}
    \text{coefficient of\;} \mathbb{E}\|\bu^*(\bx_t, \by_{t+1}, \bz_{t+1})-\bx_t\|^2\geq \frac{\tau\lambda^2}{16}-\frac{3\tau\lambda^2}{100}-\frac{\tau\lambda^2}{400}-\frac{\tau\lambda^2}{400}-\frac{\tau\lambda^2}{100}=\frac{7\tau \lambda^2}{400}
\end{equation}

Last, we estimate the coefficient of $\mathbb{E}\|A\bx^*(\by_{t+1}, \bz_t)-\bb\|^2$ in \eqref{eq: smk8}:

By $6\mu\beta \bar{\sigma}^2\leq \frac{\eta}{6}$ and the definition $\eta',\gamma_s$, we have $-\frac{4\|A\|^2\eta^3}{\gamma_s^2}=-\frac{\eta'^2\eta}{(\mu-L_f+\lambda)^2}\overset{\frac{\eta'}{\mu-L_f+\lambda}\leq\frac{\tau}{10}}{\geq}-\frac{\eta \tau^2}{100}\geq -\frac{\eta}{100}$ and $-6\mu\beta \bar{\sigma}^2\geq -\frac{\eta}{6}$. Hence, we have
\begin{equation}\label{eq: est_coefficient_Ax}
    \text{coefficient of\;}\mathbb{E}\|A\bx^*(\by_{t+1}, \bz_t)- \bb\|^2\geq \frac{\eta}{4}.
\end{equation}

Plug \ref{eq: est_coefficient_z}, \ref{eq: est_coefficient_u} and \ref{eq: est_coefficient_Ax} to \ref{eq: smk8}, we finishe the proof.
\end{proof}
\subsection{Proof of Theorem \ref{th: moreau_env_det}}\label{appendix: complexity_moreau}
\begin{proof}
We start from the result in \Cref{lem: appendix_descent}. First, it follows from the definition of $\bz_{t+1}$ that
\begin{equation*}
\|\bz_t-\bz_{t+1}\|=\beta\|\bx_{t}-\bz_t\|.
\end{equation*}
So, we rewrite (\ref{desent lemma}), as:
\begin{equation}\label{eq: siw3}
     \mathbb{E}V_t-\mathbb{E}V_{t+1}\geq \beta^2 c_\beta\mathbb{E}\|\bx_{t}-\bz_t\|^2+c_\tau\mathbb{E}\|\bu^*(\bx_t, \by_{t+1}, \bz_{t})-\bx_t)\|^2+c_\eta\mathbb{E}\|A\bx^*(\by_{t+1}, \bz_t)-\bb\|^2-\lambda \tau^2\sigma^2.
\end{equation}

For $t>0$, we have $V_t\geq\underline{f}$, which is proven in \Cref{lem: lb}. It then follows that
\begin{equation}\label{eq: hfr4}
\sum_{t=0}^{T-1}(\mathbb{E}V_t-\mathbb{E}V_{t+1})=\mathbb{E}V_0-\mathbb{E}V_{T}\leq\mathbb{E}V_0-\underline{f}.
\end{equation}

Then, summing up \eqref{eq: siw3}, using \eqref{eq: hfr4}, and the fact that $c_\tau = \Theta(\tau)$, $c_\eta = \Theta(\tau)$, $\beta^2 c_\beta = \Theta(\tau)$ from \eqref{eq: param_defs_appendix}, we have
\begin{equation*}
V_{0}-\underline f+T\lambda \tau^2\sigma^2 \geq \sum_{t=1}^TC_0\tau\left[\mathbb{E}\|\bx_{t}-\bz_t\|^2+\mathbb{E}\|\bu^*(\bx_t,\by_{t+1},\bz_{t})-\bx_t\|^2+\mathbb{E}\|A\bx^*(\by_{t+1},\bz_t)-\bb\|^2\right],
\end{equation*}
for some explicit constant $C_0$.

Dividing both sides by $T$, rearranging and using the definition $\tau =\frac{1}{6\lambda\sqrt{T}}$ gives
\begin{equation}\label{eq: asn4}
    \frac{1}{T}\sum_{t=0}^{T-1}\mathbb{E}\|\bx_{t}-\bz_t\|^2+\mathbb{E}\|\bu^*(\bx_t,\by_{t+1},\bz_{t})-\bx_t\|^2+\mathbb{E}\|A\bx^*(\by_{t+1},\bz_t)-\bb\|^2\leq \frac{1}{C_0\sqrt{T}}\left(6\lambda(V_0-\underline{f}) + \frac{\sigma^2}{6}\right).
\end{equation}

Since we have 
\begin{equation*}
\nabla \Psi(\bz_t) = \mu(\bz_t - \bar{\bx}^*(\bz_t)),
\end{equation*}
by Danskin's theorem, we deduce for any $t$
        \begin{align*}
            \frac{1}{\mu^2}\|\nabla\Psi(\bz_t)\|&=\|\bz_t-\bar{\bx}^*(\bz_t)\| \\
            &\leq \|\bz_t-\bx^*(\by_{t+1},z_t)\|+\|\bx^*(\by_{t+1}, \bz_t)-\bar{\bx}^*(\bz_t)\|\\
            &\leq \|\bz_t-\bx^*(\by_{t+1},\bz_t)\|+\bar{\sigma}\|A\bx^*(\by_{t+1},\bz_s)-\bb\|\\
            &\leq \|\bz_t-\bx_{t}\|+\|\bx_t-\bx^*(\by_{t+1},\bz_t)\|+\bar{\sigma}\|A\bx^*(\by_{t+1},\bz_t)-\bb\| \\
            &\leq \|\bz_t-\bx_{t}\|+\frac{\lambda}{\gamma}\|\bx_t-\bu^*(\bx_t, \by_{t+1},\bz_t)\|+\bar{\sigma}\|A\bx^*(\by_{t+1},\bz_t)-\bb\|.
        \end{align*}
        where the first inequality is by triangle inequality, the second by \eqref{global error}, the third by triangle inequality and the fourth by \eqref{x-s*}.
        
        Next, we take square of both sides, take expectation, use Young's inequality, sum for all $t=0, \dots, T-1$, divide by $T$ and use \eqref{eq: asn4} to derive
        \begin{align*}
            \frac{1}{\mu^2}\frac{1}{T}\sum_{t=1}^T \mathbb{E}\|\nabla\Psi(\bz_t)\|^2 &\leq \frac{1}{T}\sum_{t=1}^T \mathbb{E}\left[ 3\|\bz_t-\bx_{t}\|^2 + \frac{3\lambda^2}{\gamma^2}\|\bx_t-\bu^*(\bx_t, \by_{t+1},\bz_t)\|^2 + 3\bar{\sigma}^2 \|A\bx^*(\by_{t+1},\bz_t)-\bb\|^2\right] \\
            &=O\left(\frac{1}{\sqrt{T}}\right).
        \end{align*}
The result then follows since $t^*$ is selected uniformly at random from $\{1, 2, \dots, T\}$.
\end{proof}
\subsection{Proof of \Cref{cor: postprocess}}\label{subsec: postprocess}
\begin{proof}
    From the definition of $\hat{x}$, we have

    $$0\in \hat{G}(\bx_t,\by_{t+1},\bz_t)+\frac{2}{\tau}(\hat{\bx}-\bx_t)+\partial I_X(\hat{\bx}).$$
Let us set
    \begin{equation}\label{eq: ssa3}
    \bv=\nabla_\bx K(\hat{\bx},\by_{t+1},\bz_t)-\hat{G}(\bx_t,\by_{t+1},\bz_t)-\frac{2}{\tau}(\hat{\bx}-\bx_t)-\rho A^T(A\hat{\bx}-\bb)-\mu(\hat{\bx}-\bz_t).
    \end{equation}
    Combining with the optimality condition, we have
    \begin{align*}
    \bv&\in \nabla_\bx K(\hat{\bx},\by_{t+1},\bz_t)-\rho A^T(A\hat{\bx}-\bb)-\mu(\hat{\bx}-\bz_t)+\partial I_X(\hat{\bx})  \\
&=  \nabla f(\hat{\bx})+A^T\by_{t+1}+\partial I_X(\hat{\bx}).
    \end{align*}

Hence, we need to estimate $\mathbb{E}\|A\hat{\bx}-\bb\|$ and $\mathbb{E}\|\bv\|$.
    
For the mini-batch gradient in the post-processing step, we have
\begin{equation}
\mathbb{E}\| \hat{G}(\bx, \by, \bz) - \nabla K(\bx, \by, \bz) \|^2 \leq \frac{\sigma^2}{B}.
\end{equation}
which is a standard calculation \citep[Section 5.2.3]{lan2020first}. Since $B=\Omega(\varepsilon^{-2})$, this gives us
\begin{equation}\label{eq: asl4}
\mathbb{E}\| \hat{G}(\bx, \by, \bz) - \nabla K(\bx, \by, \bz)\|^2 \leq \varepsilon^2.
\end{equation}

First, let us note that the purpose of $\hat{\bx}$ is to estimate $\bu^*(\bx_t, \by_{t+1}, \bz_t)$, where
\begin{equation*}
\bu^*(\bx_t, \by_{t+1}, \bz_t) =\arg\min_{\bu\in X} \{l(\bu):= K(\bu, \by_{t+1}, \bz_t) + \frac{\lambda}{2} \| \bx_t - \bu\|^2\}.
\end{equation*}

Note that the gradient of this objective is
\begin{equation*}
\nabla l(\bu) = \nabla_\bx K(\bx, \by_{t+1}, \bz_t) + \lambda (\bx-\bx_t).
\end{equation*}
As a result, we have $\nabla l(\bx_t) = \nabla_\bx K (\bx_t, \by_{t+1}, \bz_t)$.
 Let us also denote
\begin{equation*}
\bx^*_t = \proj_X(\bx_t - \tau \nabla l(\bx_t)).
\end{equation*}
That is, $\bx^*_t$ is the output of doing a full-gradient step on $\bx_t$. Of course, this is not tractable in our setting, but we only use this as a theoretical tool.

Since this is a GD step on the objective $l$ which is $L_K$-smooth and convex with optimizer $\bu^*(\bx_t, \by_{t+1}, \bz_t)$, the standard analysis for GD gives 
\begin{align}\label{eq: asx3}
\|\bx^*_t - \bu^*(\bx_t, \by_{t+1}, \bz_t) \|^2 \leq \|\bx_t - \bu^*(\bx_t, \by_{t+1}, \bz_t) \|^2,
\end{align}
as long as $\tau \leq \frac{1}{L_K}$.

Next, by the definitions of $\bx_t^*$ and $\hat\bx$, along with nonexpansiveness of the projection, we have
\begin{align}
\mathbb{E}\| \bx^*_t - \hat\bx \|^2 &\leq \mathbb{E}\tau^2 \| \hat G(\bx_t, \by_{t+1}, \bz_{t+1}) - \nabla_\bx K(\bx_t, \by_{t+1}, \bz_t)\|^2 \notag \\
&\leq \tau^2\varepsilon^2,\label{eq: asa3}
\end{align}
where the second inequality used \eqref{eq: asl4}.

In view of \eqref{eq: ssa3}, we estimate $\|\bv\|$ as
\begin{align*}
\|\bv\| \leq \| \nabla_x K(\bx_t, \by_{t+1}, \bz_t) - \hat{G}(\bx_t, \by_{t+1}, \bz_t)\| + L_K\| \bx_t-\hat \bx\| + \frac{2}{\tau} \| \hat \bx- \bx_t\| + \rho \| A\| \|A\hat\bx-\bb\| + \mu \|\hat\bx - \bz_t\|.
\end{align*}
On this, multiple applications of triangle inequality gives
\begin{align}
\| \hat\bx - \bx_t \| &\leq \| \hat\bx -\bx^*_t \| + \| \bx_t^* - \bu^*(\bx_t, \by_{t+1}, \bz_t) \| + \| \bu^*(\bx_t, \by_{t+1}, \bz_t)-\bx_t \|\notag  \\
&\leq \| \hat\bx -\bx^*_t \| + 2 \| \bu^*(\bx_t, \by_{t+1}, \bx_t) - \bx_t \|,\label{eq: axze4}
\end{align}
where the second line is due to \eqref{eq: asx3}.

For the feasibility, we have by triangle inequality that
\begin{equation}
\|\hat\bx-\bz_t\| \leq \|\hat\bx-\bx_t \| + \| \bx_t - \bz_t\|.
\end{equation}

As a result, we have that
\begin{align}\label{eq: awq4}
\|\bv\| &= O\big( \|\hat\bx-\bx_t^*\| + \|\bx_t - \bu^*(\bx_t, \by_{t+1}, \bz_t) \| + \| A\hat\bx - \bb\| + \| \bx_t-\bz_t\| \notag \\
&\qquad+ \| \nabla_x K(\bx_t, \by_{t+1}, \bz_t) - \hat{G}(\bx_t, \by_{t+1}, \bz_t)\| \big).
\end{align}
For the feasibility, we have
\begin{align*}
\|A\hat\bx - \bb \| &\leq \|A\hat\bx - A\bx_t \| + \|A\bx_t - \bb\| \\
&\leq \|A\| \|\hat\bx - \bx_t\| + \|A\bx_t-\bb\|.
\end{align*}
Now, by invoking the above inequality for $t=t^*$, taking expectation, using Young's inequality, \eqref{eq: axze4}, \eqref{eq: asa3} and \eqref{eq: asn4} along with \eqref{Ax_t-b}, we get that
\begin{equation}\label{eq: asxf4}
\mathbb{E}\|A\hat\bx-\bb\|^2\leq\varepsilon^2,
\end{equation}
since $T=\Omega(\varepsilon^{-4})$.

Finally, using $t=t^*$, taking square and then expectation of \eqref{eq: awq4}, using Young's inequality and then combining \eqref{eq: asxf4}, \eqref{eq: asa3}, \eqref{eq: asl4} and \eqref{eq: asn4} gives the result since $T=\Omega(\varepsilon^{-4})$.
\end{proof}

\subsection{Auxiliary Results}\label{app: aux}

\begin{lemma} \label{error bound}
Under \Cref{asmp: 1},    for any $ \bx,\bz,\bz'\in X$, we have
    \begin{align}
        \frac{\lambda}{\gamma}\|\bx - \bu^*(\bx,\by,\bz)\| &\geq \|\bx - \bx^*(\by,\bz)\|, \label{x-s*}\\ 
        \|\bu^*(\bx,\by,\bz)-\bx\|&\leq \|\bx-\bx^*(\by,\bz)\|,\label{s^*-x(y,z)}\\ 
        \|\bu^*(\bx,\by,\bz)-\bu^*(\bx,\by',\bz)\|&\leq \frac{\|A\|}{\gamma_s}\|\by-\by'\|,\label{s^*(x,y,z)-s^*(x,y',z)}\\ 
        \|\bu^*(\bx,\by,\bz')-\bu^*(\bx,\by,\bz')\|&\leq \frac{\mu}{\gamma_s}\|\bz-\bz'\|,\label{s^*(x,y,z')-s^*(x,y,z)}\\
        \|\bz'-\bz\|&\geq \frac{\mu-L_f}{\mu}\|\bx^*(\by,\bz')-\bx^*(\by,\bz)\|, \label{x(y,z')-x(y,z)}\\
        \|\by'-\by\|&\geq \frac{\gamma_K}{\|A\|}\|\bx^*(\by',\bz)-\bx^*(\by,\bz)\|, \label{x(y',z)-x(y,z)}\\
        \|\bar{\bx}^*(\bz)-\bar{\bx}^*(\bz')\| &\leq \frac{\mu}{\mu-L_f}\|\bz-\bz'\|  \label{x(z)-x(z')}\\
    \end{align}
where $\gamma=\frac{(\mu-L_f)\lambda}{\mu-L_f+\lambda}, \gamma_s=\mu-L_f+\lambda, \gamma_K=\mu-L_f$.
\end{lemma}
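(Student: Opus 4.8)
The plan is to recognize that all seven estimates are standard sensitivity results for minimizers of strongly convex functions, and to reduce them to two elementary principles. The key structural fact I would establish first is that $\bu\mapsto K(\bu,\by,\bz)$ is $\gamma_K$-strongly convex with $\gamma_K=\mu-L_f>0$: indeed $f$ is $L_f$-weakly convex by smoothness, the term $\frac{\rho}{2}\|A\bu-\bb\|^2$ is convex, and $\frac{\mu}{2}\|\bu-\bz\|^2$ contributes modulus $\mu$, so the net modulus is $\mu-L_f$, which is positive since $\mu=\max\{2,4L_f\}$. Adding $\frac{\lambda}{2}\|\bu-\bx\|^2$ raises the modulus to $\gamma_s=\mu-L_f+\lambda$, and the objective defining $\bar\bx^*(\bz)$ is $\gamma_K$-strongly convex over the \emph{fixed} convex set $X\cap\{\bv:A\bv=\bb\}$.

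The second principle is a general perturbation lemma: if $g$ is $m$-strongly convex, $C$ is closed convex, and $\bp_i$ minimizes $g+\langle\bc_i,\cdot\rangle$ over $C$, then $\|\bp_1-\bp_2\|\le\frac1m\|\bc_1-\bc_2\|$; this follows by summing the two first-order optimality (variational) inequalities and invoking strong monotonicity of $\nabla g+\partial I_C$, with the normal-cone terms absorbed by monotonicity. Five of the seven bounds are direct instances. For \eqref{s^*(x,y,z)-s^*(x,y',z)} and \eqref{x(y',z)-x(y,z)} the only dependence on $\by$ sits in the linear term $\langle A\bu-\bb,\by\rangle$, so the gradient perturbation is $A^\top(\by-\by')$ of norm at most $\|A\|\,\|\by-\by'\|$, giving moduli $\gamma_s$ and $\gamma_K$ respectively. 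For \eqref{s^*(x,y,z')-s^*(x,y,z)}, \eqref{x(y,z')-x(y,z)} and \eqref{x(z)-x(z')} I would first expand $\frac{\mu}{2}\|\bu-\bz\|^2$ to isolate the linear term $-\mu\langle\bz,\bu\rangle$; replacing $\bz$ by $\bz'$ perturbs the gradient by $\mu(\bz'-\bz)$, of norm $\mu\|\bz-\bz'\|$, yielding the constants $\frac{\mu}{\gamma_s}$, $\frac{\mu}{\gamma_K}$ and $\frac{\mu}{\gamma_K}$.

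The two remaining estimates \eqref{x-s*} and \eqref{s^*-x(y,z)} relate the regularized proximal point $\bu^*:=\bu^*(\bx,\by,\bz)$ to the unregularized minimizer $\bx^*:=\bx^*(\by,\bz)$ of $h:=K(\cdot,\by,\bz)$. Writing the variational inequalities $\langle\nabla h(\bu^*)+\lambda(\bu^*-\bx),\bv-\bu^*\rangle\ge0$ and $\langle\nabla h(\bx^*),\bv-\bx^*\rangle\ge0$ for all $\bv\in X$, I would test the first at $\bv=\bx^*$ and the second at $\bv=\bu^*$, add them, and use $\gamma_K$-strong monotonicity of $\nabla h$ to obtain
\[
\lambda\langle\bu^*-\bx,\,\bx^*-\bu^*\rangle\ge\gamma_K\|\bu^*-\bx^*\|^2 .
\]
Since the right-hand side is nonnegative, rewriting $\bx^*-\bu^*=(\bx^*-\bx)-(\bu^*-\bx)$ gives $\langle\bu^*-\bx,\bx^*-\bx\rangle\ge\|\bu^*-\bx\|^2$, so Cauchy--Schwarz yields $\|\bu^*-\bx\|\le\|\bx^*-\bx\|$, which is \eqref{s^*-x(y,z)}. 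For \eqref{x-s*}, Cauchy--Schwarz on the displayed inequality gives $\|\bu^*-\bx^*\|\le\frac{\lambda}{\gamma_K}\|\bu^*-\bx\|$, and then $\|\bx-\bx^*\|\le\|\bx-\bu^*\|+\|\bu^*-\bx^*\|\le\bigl(1+\tfrac{\lambda}{\gamma_K}\bigr)\|\bx-\bu^*\|$ together with $\frac{\lambda}{\gamma}=\frac{\gamma_K+\lambda}{\gamma_K}=1+\frac{\lambda}{\gamma_K}$ closes the bound.

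I expect the only genuine care to be in the error bounds \eqref{x-s*}--\eqref{s^*-x(y,z)}: correctly combining the two variational inequalities so that the normal-cone contributions vanish by monotonicity, and matching the constant $\frac{\lambda}{\gamma}$ to the definition $\gamma=\frac{(\mu-L_f)\lambda}{\mu-L_f+\lambda}$. The five perturbation bounds are routine once the strong-convexity moduli are identified; the only point worth checking for \eqref{x(z)-x(z')} is that the feasible region $X\cap\{A\bv=\bb\}$ does not move as $\bz$ varies, so that the perturbation lemma applies verbatim on that set.
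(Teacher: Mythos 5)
Your proposal is correct, and every constant matches the statement, but it takes a genuinely different route from the paper on the two ``error bound'' inequalities. For \eqref{x-s*} the paper does not argue via variational inequalities at all: it observes that $\bx^*(\by,\bz)$ minimizes the Moreau envelope $\varphi_{1/\lambda}(\cdot,\by,\bz)$, proves (via a result of Planiden--Wang, \Cref{lem: shawn_lemma} and \Cref{lem: sc_moreau}) that this envelope is $\gamma$-strongly convex with $\gamma=\frac{(\mu-L_f)\lambda}{\mu-L_f+\lambda}$, and then reads off $\lambda\|\bx-\bu^*\|=\|\nabla_\bx\varphi_{1/\lambda}(\bx,\by,\bz)\|\geq\gamma\|\bx-\bx^*(\by,\bz)\|$ from strong convexity and the gradient formula of the envelope. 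Your argument---summing the two first-order optimality conditions for $\bu^*$ and $\bx^*$, invoking $\gamma_K$-strong monotonicity of $\nabla K(\cdot,\by,\bz)$, and finishing with Cauchy--Schwarz and the triangle inequality together with the identity $\frac{\lambda}{\gamma}=1+\frac{\lambda}{\gamma_K}$---reaches the same constant without ever introducing the envelope, which is more elementary and self-contained; the paper's route has the side benefit that the strong convexity of $\varphi_{1/\lambda}$ is needed elsewhere anyway (e.g.\ in \Cref{lem: lb} and the complexity analysis), so it gets reused. For \eqref{s^*-x(y,z)} the paper's proof is shorter than yours: it simply compares objective values, $K(\bu^*,\by,\bz)+\frac{\lambda}{2}\|\bx-\bu^*\|^2\leq K(\bx^*,\by,\bz)+\frac{\lambda}{2}\|\bx-\bx^*\|^2$ together with $K(\bx^*,\by,\bz)\leq K(\bu^*,\by,\bz)$, whereas you derive it from the same summed variational inequality. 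For the five Lipschitz-type perturbation bounds your unified ``perturbed linear term over a fixed convex set'' lemma is essentially the monotonicity inequality the paper extracts by adding and subtracting function values and using $\gamma_s$- (resp.\ $\gamma_K$-) strong convexity twice; the paper proves \eqref{s^*(x,y,z)-s^*(x,y',z)} and \eqref{s^*(x,y,z')-s^*(x,y,z)} this way and cites \cite{zhang2022error} for \eqref{x(y,z')-x(y,z)}, \eqref{x(y',z)-x(y,z)}, \eqref{x(z)-x(z')}, which you instead prove directly. Your observation that the feasible set $X\cap\{\bv:A\bv=\bb\}$ defining $\bar\bx^*(\bz)$ is independent of $\bz$ is exactly the point that makes \eqref{x(z)-x(z')} an instance of the same lemma.
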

    \begin{proof}
    The proofs for \eqref{x(y,z')-x(y,z)}, \eqref{x(y',z)-x(y,z)}, and \eqref{x(z)-x(z')} appear in \cite{zhang2022error}, so we omit these proofs.
    
    We first prove \eqref{x-s*}. Let us note that $\bx^*(\by, \bz)$ minimizes $\varphi_{1/\lambda}$, see for example \citep[Theorem XV4.1.7]{hiriartconvex}. As a result, we have $\nabla_\bx\varphi_{1/\lambda}(\bx^*(\by,\bz),\by,\bz)=0$. From Lemma \ref{lem: sc_moreau}, we have that $\varphi_{1/\lambda}(\cdot,y,z)$ is $\gamma=\frac{(\mu-L_f)\lambda}{\mu-L_f+\lambda}$-strongly convex.

        Then, by strong convexity, we have 
        \begin{align*}
        &\langle \nabla_\bx\varphi_{1/\lambda}(\bx^*(\by,\bz),\by,\bz)- \nabla_\bx\varphi_{1/\lambda}(\bx,\by,\bz),\bx^*(\by,\bz)-\bx \rangle\geq \gamma\|\bx-\bx^*(\by,\bz)\|^2 \\
        \iff & \| \nabla_\bx \varphi_{1/\lambda}(\bx, \by, \bz) \| \geq \gamma \|\bx-\bx^*(\by,\bz)\|,
        \end{align*}
        where the inclusion used $\nabla_\bx\varphi_{1/\lambda}(\bx^*(\by,\bz),\by,\bz)=0$ established in the previous paragraph as well as Cauchy-Schwarz inequality. Then, using $\nabla_\bx\varphi_{1/\lambda}(\bx, \by, \bz) = \lambda (\bx - \bu^*(\bx, \by, \bz))$, we obtain \eqref{x-s*}.

    From definition of $\bu^*(\bx, \by, \bz)$ in \eqref{eq: aeh4} , we have, 
    \begin{equation*}
    K(\bu^*(\bx,\by,\bz),\by,\bz)+\frac{\lambda}{2}\|\bx-\bu^*(\bx,\by,\bz)\|^2\leq K(\bx^*(\by,\bz),\by,\bz)+\frac{\lambda}{2}\|\bx-\bx^*(\by,\bz)\|^2,
    \end{equation*}
    where we also remark that $\bx^*(\by, \bz) \in X$. Combining with $K(\bx^*(\by,\bz),\by,\bz)\leq K(\bu^*(\bx,\by,\bz),\by,\bz)$, which follows from the definition of $\bx^*(\by, \bz)$ in \eqref{eq: x_def}, we have (\ref{s^*-x(y,z)}).

The proofs of the other two assertions will use a similar idea to \cite{zhang2022error}, but there will be differences in the estimations since this previous work did not use the function $\varphi_{1/\lambda}$.

    For \eqref{s^*(x,y,z)-s^*(x,y',z)}, we proceed by using the definition of $\varphi_{1/\lambda}$ and adding and subtracting $K(\bu^*(\bx, \by', \bz), \by, \bz)$ to get
    \begin{align*}
    & K(\bu^*(\bx,\by,\bz),\by,\bz) + \frac{\lambda}{2}\|\bu^*(\bx,\by,\bz) -\bx\|^2 - K(\bu^*(\bx,\by',\bz),\by',\bz) - \frac{\lambda}{2}\|\bu^*(\bx,\by', \bz) - \bx\|^2 \\
    = & \, K(\bu^*(\bx,\by,\bz),\by,\bz) + \frac{\lambda}{2}\|\bu^*(\bx,\by,\bz) - \bx\|^2 \\
    & \quad - K(\bu^*(\bx,\by',\bz),\by,\bz) - \frac{\lambda}{2}\|\bu^*(\bx,\by',\bz) - \bx\|^2 \\
    & \quad + K(\bu^*(\bx,\by',\bz),\by,\bz) - K(\bu^*(\bx,\by',\bz),\by',\bz) \\
    \leq & \, \frac{-\gamma_s}{2}\|\bu^*(\bx,\by,\bz) - \bu^*(\bx,\by',\bz)\|^2 
    + \langle \by - \by', A\bu^*(\bx,\by',\bz) - \bb \rangle.
    \end{align*}

The last step uses $\bu\mapsto K(\bu, \by, \bz) + \frac{\lambda}{2} \| \bu-\bx\|^2$ being $\gamma_s$-strongly convex (cf. \Cref{fact: params}) with minimizer $\bu^*(\bx, \by, \bz)$, as well as the definition of $K$. 

We then argue similarly, this time adding and subtracting $K(\bu^*(\bx, \by, \bz), \by', \bz)$:
    \begin{align*}
    & K(\bu^*(\bx,\by,\bz),\by,\bz) + \frac{\lambda}{2}\|\bu^*(\bx,\by,\bz) - \bx\|^2- K(\bu^*(\bx,\by',\bz),\by',\bz) - \frac{\lambda}{2}\|\bu^*(\bx,\by',\bz) - \bx\|^2 \\
    = & \, K(\bu^*(\bx,\by,\bz),\by',\bz) + \frac{\lambda}{2}\|\bu^*(\bx,\by,\bz) - \bx\|^2 \\
    & \quad - K(\bu^*(\bx,\by',\bz),\by',\bz) - \frac{\lambda}{2}\|\bu^*(\bx,\by',\bz) - \bx\|^2 \\
    & \quad - K(\bu^*(\bx,\by,\bz),\by',\bz) + K(\bu^*(\bx,\by,\bz),\by,\bz) \\
    \geq & \, \frac{\gamma_s}{2}\|\bu^*(\bx,\by,\bz) - \bu^*(\bx,\by',\bz)\|^2 
    + \langle \by - \by', A\bu^*(\bx,\by,\bz) - \bb \rangle.
\end{align*}
The last step uses that $\bu\mapsto K(\bu, \by', \bz) + \frac{\lambda}{2} \| \bu-\bx\|^2$ is $\gamma_s$-strongly convex (cf. \Cref{fact: params}) with minimizer $\bu^*(\bx, \by', \bz)$ and the definition of $K$.

Combining the last two estimates give
\begin{equation*}
\langle \by-\by',A\bu^*(\bx,\by',\bz)-A\bu^*(\bx,\by,\bz) \rangle \geq \gamma_s\|\bu^*(\bx,\by,\bz)-\bu^*(\bx,\by',\bz)\|^2.
\end{equation*} 
Using Cauchy-Schwarz inequality and the definition of operator norm gives \eqref{s^*(x,y,z)-s^*(x,y',z)}.

The proof of \eqref{s^*(x,y,z')-s^*(x,y,z)} is similar to the proof of \eqref{s^*(x,y,z)-s^*(x,y',z)}, just completed. In particular, by adding and subtracting $K(\bu^*(\bx, \by, \bz), \by, \bz')$, we have
\begin{align*}
    &K(\bu^*(\bx,\by,\bz),\by,\bz)+\frac{\lambda}{2}\|\bu^*(\bx,\by,\bz)-\bx\|^2
    - K(\bu^*(\bx,\by,\bz'),\by,\bz')+\frac{\lambda}{2}\|\bu^*(\bx,\by,\bz')-\bx\|^2\\
    &= K(\bu^*(\bx,\by,\bz),\by,\bz)+\frac{\lambda}{2}\|\bu^*(\bx,\by,\bz)-\bx\|^2- K(\bu^*(\bx,\by,\bz'),\by,\bz)-\frac{\lambda}{2}\|\bu^*(\bx,\by,\bz')-\bx\|^2\\
    &-K(\bu^*(\bx,\by,\bz'),\by,\bz')+K(\bu^*(\bx,\by,\bz'),\by,\bz)\\
    &\leq -\frac{\gamma_s}{2}\|\bu^*(\bx,\by,\bz)-\bu^*(\bx,\by,\bz')\|^2+\frac{\mu}{2}(\|\bu^*(\bx,\by,\bz')-\bz\|^2-\|\bu^*(\bx,\by,\bz')-\bz'\|^2),
\end{align*}
where we used that $\bu \mapsto K(\bu, \by, \bz) + \frac{\lambda}{2} \| \bu-\bx\|^2$ is $\gamma_s$-strongly convex with minimizer $\bu^*(\bx, \by, \bz)$ and the definition of $K$.

Finally, we add and subtract $K(\bu^*(\bx, \by, \bz'), \by, \bz)$ to get
\begin{align*}
    &K(\bu^*(\bx,\by,\bz),\by,\bz)+\frac{\lambda}{2}\|\bu^*(\bx,\by,\bz)-\bx\|^2
    - K(\bu^*(\bx,\by,\bz'),\by,\bz')-\frac{\lambda}{2}\|\bu^*(\bx,\by,\bz')-\bx\|^2\\
    &= K(\bu^*(\bx,\by,\bz),\by,\bz')+\frac{\lambda}{2}\|\bu^*(\bx,\by,\bz)-\bx\|^2- K(\bu^*(\bx,\by,\bz'),\by,\bz)-\frac{\lambda}{2}\|\bu^*(\bx,\by,\bz')-\bx\|^2\\
    &+K(\bu^*(\bx,\by,\bz),\by,\bz)-K(\bu^*(\bx,\by,\bz),\by,\bz')\\
    &\geq \frac{\gamma_s}{2}\|\bu^*(\bx,\by,\bz)-\bu^*(\bx,\by,\bz')\|^2+\frac{\mu}{2}(\|\bu^*(\bx,\by,\bz)-\bz\|^2-\|\bu^*(\bx,\by,\bz)-\bz'\|^2),
\end{align*}
where we used that $\bu \mapsto K(\bu, \by, \bz') + \frac{\lambda}{2} \| \bu-\bx\|^2$ is $\gamma_s$-strongly convex with minimizer $\bu^*(\bx, \by, \bz')$ and the definition of $K$. 

Combining the last two inequalities give
\begin{equation*}
\mu\langle \bu^*(\bx,\by,\bz')-\bu^*(\bx,\by,\bz),\bz'-\bz\rangle \geq \gamma_s\|\bu^*(\bx,\by,\bz)-\bu^*(\bx,\by,\bz')\|^2.
\end{equation*}
Using Cauchy-Schwarz inequality concludes the proof.
\end{proof}

\begin{lemma} \label{lemma of error}
Under \Cref{asmp: 1}, we have that
    \begin{align}
        \|A\bx_t-A\bx^*(\by_{t+1},\bz_t)\|^2 &\leq \frac{\|A\|^2\lambda^2}{\gamma^2}\|\bx_t-\bu^*(\bx_t,\by_{t+1},\bz_{t})\|^2, \label{Ax_t-Ax(y_t+1,z_t)} \\
    \|A\bx_t-\bb\|^2&\leq \frac{2\|A\|^2\lambda^2}{\gamma^2}\|\bx_t-\bu^*(\bx_t, \by_{t+1}, \bz_{t})\|^2+2\|A\bx^*(\by_{t+1},\bz_t)-\bb\|^2,\label{Ax_t-b}\\
        \|A\bu^*(\bx_t, \by_t, \bz_t)-A\bx_t\|^2 &\leq  \frac{2 \|A\|^4}{\gamma_s^2}\|\by_t-\by_{t+1}\|^2+2\|A\|^2\|\bu^*(\bx_t, \by_{t+1}, \bz_t)-\bx_t\|^2, \label{As^*(x_t,y_t,z_t)-Ax_t}
    \end{align}
    where $\gamma, \gamma_s$ are defined in \eqref{eq: param_defs_appendix}. 
\end{lemma}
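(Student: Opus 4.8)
The plan is to obtain all three estimates directly from the error bounds collected in \Cref{error bound}, combined with two elementary facts: the operator-norm inequality $\|A\bv\| \leq \|A\|\,\|\bv\|$ and the Young-type bound $\|\ba + \bb\|^2 \leq 2\|\ba\|^2 + 2\|\bb\|^2$. None of the three inequalities requires a new structural argument; each is a short chain of these two tools applied after inserting a suitable intermediate point whose difference matches exactly one of the already-available bounds.

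For \eqref{Ax_t-Ax(y_t+1,z_t)}, I would first pass to the operator norm, writing $\|A\bx_t - A\bx^*(\by_{t+1},\bz_t)\|^2 \leq \|A\|^2\,\|\bx_t - \bx^*(\by_{t+1},\bz_t)\|^2$, and then invoke \eqref{x-s*} with $(\bx,\by,\bz) = (\bx_t,\by_{t+1},\bz_t)$, which bounds $\|\bx_t - \bx^*(\by_{t+1},\bz_t)\|$ by $\tfrac{\lambda}{\gamma}\|\bx_t - \bu^*(\bx_t,\by_{t+1},\bz_t)\|$. Squaring and substituting gives the claim immediately.

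For \eqref{Ax_t-b}, I would insert the intermediate point $A\bx^*(\by_{t+1},\bz_t)$ and split $\|A\bx_t - \bb\|^2$ via the Young-type bound into $2\|A\bx_t - A\bx^*(\by_{t+1},\bz_t)\|^2 + 2\|A\bx^*(\by_{t+1},\bz_t) - \bb\|^2$, then control the first term using the just-established \eqref{Ax_t-Ax(y_t+1,z_t)}. For \eqref{As^*(x_t,y_t,z_t)-Ax_t}, I would insert $A\bu^*(\bx_t,\by_{t+1},\bz_t)$ and split analogously; the first piece, $\|A\bu^*(\bx_t,\by_t,\bz_t) - A\bu^*(\bx_t,\by_{t+1},\bz_t)\|^2$, is bounded by $\|A\|^2$ times the dual-Lipschitz estimate \eqref{s^*(x,y,z)-s^*(x,y',z)}, producing the $\tfrac{\|A\|^4}{\gamma_s^2}\|\by_t - \by_{t+1}\|^2$ term, while the second piece is controlled by the operator norm alone, giving $2\|A\|^2\|\bu^*(\bx_t,\by_{t+1},\bz_t) - \bx_t\|^2$.

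The main obstacle here is purely one of bookkeeping: keeping the two families of minimizers $\bx^*(\by,\bz)$ and $\bu^*(\bx,\by,\bz)$ and their argument dependencies straight, and inserting in each case precisely the intermediate point whose difference is governed by one of the precomputed Lipschitz or error bounds in \Cref{error bound}. There is no genuine analytic difficulty, since the strong convexity and global error bound machinery underlying those estimates has already been carried out.
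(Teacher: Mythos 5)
Your proposal is correct and follows essentially the same route as the paper's proof: each inequality is obtained by the operator-norm bound, a Young-type splitting around the same intermediate points ($A\bx^*(\by_{t+1},\bz_t)$ for the second and $A\bu^*(\bx_t,\by_{t+1},\bz_t)$ for the third), and the precomputed estimates \eqref{x-s*} and \eqref{s^*(x,y,z)-s^*(x,y',z)}. No differences worth noting.
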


\begin{proof}

    The assertion in (\ref{Ax_t-Ax(y_t+1,z_t)}) follows directly from \eqref{x-s*} since
    \begin{equation*}
    \|A\bx_t-A\bx^*(\by_{t+1},\bz_t)\|^2 \leq \|A\|^2\|\bx_t-\bx^*(\by_{t+1},\bz_t)\|^2\leq \frac{\|A\|^2\lambda^2}{\gamma^2}\|\bx_t-\bu^*(\bx_t,\by_{t+1},\bz_{t})\|^2.
    \end{equation*}
Combining the first assertion with Young's inequality gives the second assertion, since
    \begin{align*}
    \|A\bx_t-\bb\|^2&\leq 2\|A\bx_t-A\bx^*(\by_{t+1},\bz_t)\|^2+2\|A\bx^*(\by_{t+1},\bz_t)-\bb\|^2 \\
    &\leq \frac{2\|A\|^2\lambda^2}{\gamma^2}\|\bx_t-\bu^*(\bx_t, \by_{t+1}, \bz_{t})\|^2+2\|A\bx^*(\by_{t+1},\bz_t)-\bb\|^2.
    \end{align*}
Young's inequality and \eqref{s^*(x,y,z)-s^*(x,y',z)} gives the third assertion
    \begin{align*}
        \|A\bu^*(\bx_t, \by_t, \bz_t)-A\bx_t\|^2 &\leq 2\|A\bu^*(\bx_t,\by_t,\bz_t)-A\bu^*(\bx_t,\by_{t+1},\bz_t)\|^2+2\|A\bu^*(\bx_t,\by_{t+1},\bz_t)-A\bx_t\|^2\\
                                    &\leq \frac{2 \|A\|^4}{\gamma_s^2}\|\by_t-\by_{t+1}\|^2+2\|A\|^2\|\bu^*(\bx_t, \by_{t+1}, \bz_t)-\bx_t\|^2.
    \end{align*}
The completes the proof.
\end{proof}
The following important lemma is known as the global error bound in \cite{zhang2022error}. This global result holds in its entirety in our case, so we only state it here and refer to where it appeared originally for the precise definition of the constant $\bar{\sigma}$ which depends on Hoffman constant of certain linear systems.
\begin{lemma}\citep[Lemma 3.2]{zhang2022error} \label{global error}
    If $\mu>L_f$, then we have
    $$\|\bx^*(\by,\bz)-\bar{\bx}^*(\bz)\|\leq \bar{\sigma}\|A\bx^*(\by,\bz)-\bb\|  \text{~~~for any~~~} \by,\bz$$

    where $\bar{\sigma}>0$ depends only on the constants $C_1=(L_f+\rho\|A\|^2+\mu)$, $C_2=-L_f+\mu$, and the matrices $A, H$ and is always finite.
\end{lemma}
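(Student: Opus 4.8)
The plan is to exhibit $\bx^*(\by,\bz)$ as the exact solution of the \emph{same} strongly convex program that defines $\bar\bx^*(\bz)$, but with the equality right-hand side perturbed, and then invoke a Hoffman-type polyhedral error bound to convert this perturbation into the claimed linear estimate. Write $\phi(\bx)=f(\bx)+\frac{\mu}{2}\|\bx-\bz\|^2$; since $\mu>L_f$, $\phi$ is $\gamma_K$-strongly convex with $\gamma_K=\mu-L_f=C_2$ and $(L_f+\mu)$-smooth. First I would record the first-order optimality condition for $\bx^*:=\bx^*(\by,\bz)$ over $X$: there is $\bw\in\partial I_X(\bx^*)$ with $0=\nabla\phi(\bx^*)+A^\top(\by+\rho(A\bx^*-\bb))+\bw$. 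Setting the effective multiplier $\tilde\by=\by+\rho(A\bx^*-\bb)$ and $\be:=A\bx^*-\bb$, this is exactly the KKT system of $\min_{\bx\in X,\,A\bx=\bb+\be}\phi(\bx)$, and by strong convexity this KKT system is sufficient, so $\bx^*(\by,\bz)=\argmin_{\bx\in X,\,A\bx=\bb+\be}\phi(\bx)$. Meanwhile $\bar\bx^*(\bz)=\argmin_{\bx\in X,\,A\bx=\bb}\phi(\bx)$ by definition. Thus the two points solve identical strongly convex problems over the polyhedron, differing only in the equality right-hand side by $\be$.

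Next I would upgrade this to a Lipschitz-in-$\be$ estimate. Apply Hoffman's lemma to the polyhedron $\{H\bx\le h,\ A\bx=\bb\}$: since $\bx^*\in X$ its inequality residual vanishes, so there is a feasible $\bx'\in X$ with $A\bx'=\bb$ and $\|\bx'-\bx^*\|\le\theta\|\be\|$, where $\theta$ is the Hoffman constant of $[H;A]$ (finite, with full row rank of $A$ making the equality block well posed). The remaining task is to show that the minimizer of this strongly convex program moves by $O(\|\be\|)$ when the equality right-hand side moves by $\be$. This is a standard sensitivity statement: one combines the strong-monotonicity inequality $\langle\nabla\phi(\bx^*)-\nabla\phi(\bar\bx^*),\bx^*-\bar\bx^*\rangle\ge\gamma_K\|\bx^*-\bar\bx^*\|^2$ with the two variational inequalities (testing $\bar\bx^*$'s optimality against $\bx'$, and $\bx^*$'s optimality against a feasible neighbor of $\bar\bx^*$ obtained symmetrically from Hoffman) and the displacement bounds, then solves for $\|\bx^*-\bar\bx^*\|\le\bar\sigma\|\be\|=\bar\sigma\|A\bx^*(\by,\bz)-\bb\|$, reading off $\bar\sigma$ as a function of $\theta$, $C_1=L_f+\rho\|A\|^2+\mu$ and $C_2=\mu-L_f$.

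The main obstacle is obtaining the \emph{linear} dependence on $\|A\bx^*-\bb\|$ rather than a square-root one. A naive route — subtracting the two KKT systems and applying strong monotonicity directly — produces a cross term $\langle\by-\bar\by+\rho(A\bx^*-\bb),\,A\bx^*-\bb\rangle$ involving the uncontrolled, $\by$-dependent multiplier $\bar\by$ of the constrained problem, and yields only $\|\bx^*-\bar\bx^*\|^2\lesssim\|A\bx^*-\bb\|$, which has both the wrong power and no uniformity in $\by$. Avoiding $\bar\by$ altogether is precisely what the perturbation-plus-Hoffman viewpoint buys: the final estimate depends only on the geometry of the polyhedron (through $\theta$) and on $\gamma_K$, not on the dual iterate. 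The delicate point inside the Hoffman step is the active-set combinatorics — the feasible polyhedra for $\bb$ and $\bb+\be$ may activate different inequality faces — so one must use a Hoffman constant valid uniformly over all active sets of $[H;A]$. This uniformity is exactly the content of the global error bound established in \cite{zhang2022error}, which guarantees $\bar\sigma<\infty$ independently of $\by$ and $\bz$.
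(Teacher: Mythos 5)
The paper does not actually prove this lemma: it is imported verbatim from \citep[Lemma 3.2]{zhang2022error}, and the text around \Cref{global error} explicitly defers both the proof and the precise definition of $\bar\sigma$ to that reference. So the comparison here is between your attempt and the original proof in \cite{zhang2022error}. Your opening reduction is correct and genuinely useful: writing $\phi=f+\tfrac{\mu}{2}\|\cdot-\bz\|^2$ and $\be=A\bx^*(\by,\bz)-\bb$, the first-order condition for $\bx^*(\by,\bz)$ with the effective multiplier $\tilde\by=\by+\rho\be$ is exactly a sufficient KKT system for $\min\{\phi(\bx):\bx\in X,\ A\bx=\bb+\be\}$, so $\bx^*(\by,\bz)$ and $\bar\bx^*(\bz)$ minimize the \emph{same} strongly convex function over two parallel slices of the same polyhedron. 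This is the right starting point and mirrors the structure of the argument in \cite{zhang2022error}.

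The gap is in the step you label ``a standard sensitivity statement.'' If you actually carry out the combination you describe --- Hoffman to produce $\bx'\in X\cap\{A\bx=\bb\}$ with $\|\bx'-\bx^*\|\le\theta\|\be\|$ and a symmetric $\bx''\in X\cap\{A\bx=\bb+\be\}$ near $\bar\bx^*$, then test the two variational inequalities and add strong monotonicity --- you obtain
\begin{equation*}
\gamma_K\|\bx^*-\bar\bx^*\|^2\le\langle\nabla\phi(\bx^*),\bx''-\bar\bx^*\rangle+\langle\nabla\phi(\bar\bx^*),\bx'-\bx^*\rangle\le\bigl(\|\nabla\phi(\bx^*)\|+\|\nabla\phi(\bar\bx^*)\|\bigr)\,\theta\|\be\|,
\end{equation*}
which is precisely the square-root power with uncontrolled, $\by$-dependent gradient-norm factors that you correctly diagnose as the defect of the ``naive route.'' The perturbation-plus-Hoffman framing does not by itself remove this: Hoffman controls the distance to the perturbed feasible set, not the displacement of the \emph{minimizer}, and upgrading the latter to a linear, globally uniform bound is exactly the hard content of the lemma. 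The proof in \cite{zhang2022error} gets there through a dual error bound and a case analysis over the finitely many active index sets of the polyhedron $[H;A]$ (this is also why $\bar\sigma$ depends on the smoothness constant $C_1$ and on both matrices $A$ and $H$), none of which appears in your outline. Your closing sentence, which certifies the uniform constant by appealing to ``the global error bound established in \cite{zhang2022error},'' is circular, since that global error bound is the statement being proved. To make this a proof you would need to either reproduce the active-set/dual-error-bound argument or restrict to the quadratic case (where the solution map of the resulting affine variational inequality is piecewise affine in $\be$ and hence globally Lipschitz), neither of which is done here.
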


\begin{fact}\label{fact: params}
For $\bx \in X$, we have that  $\bx\mapsto K(\bx,\by,\bz)$ is strongly convex with modulus $\gamma_K=\mu-L_f$, and $\bx\mapsto \nabla_\bx K(\bx,\by,\bz)$ is $(L_f+\rho\|A\|^2+\mu)$-Lipschitz continuous.
    
For $\bu \in X$, $\bu\mapsto K(\bu,\by,\bz)+\frac{\lambda}{2}\|\bx-\bu\|^2$ is strongly convex with modulus $\gamma_s=\mu-L_f+\lambda$, and $\bu^*(\bx, \by, \bz)$ is the minimizer of $K(\cdot, \by, \bz)+I_X(\bu)+\frac{\lambda}{2} \|\bx-\bu\|^2$.
\end{fact}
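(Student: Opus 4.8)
The plan is to reduce everything to the elementary fact that an $L_f$-smooth function is $L_f$-weakly convex, i.e.\ that $f+\frac{L_f}{2}\|\cdot\|^2$ is convex, and then to read off the two strong-convexity moduli and the Lipschitz constant directly from the explicit form of $K$.

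First I would expand, using the definition of $L_\rho$,
\[
K(\bx,\by,\bz)=f(\bx)+\langle A\bx-\bb,\by\rangle+\frac{\rho}{2}\|A\bx-\bb\|^2+\frac{\mu}{2}\|\bx-\bz\|^2 .
\]
To establish $(\mu-L_f)$-strong convexity in $\bx$, I would show that $\bx\mapsto K(\bx,\by,\bz)-\frac{\mu-L_f}{2}\|\bx\|^2$ is convex. Completing the square in the last quadratic gives $\frac{\mu}{2}\|\bx-\bz\|^2-\frac{\mu-L_f}{2}\|\bx\|^2=\frac{L_f}{2}\|\bx\|^2-\mu\langle\bx,\bz\rangle+\frac{\mu}{2}\|\bz\|^2$, so the expression becomes $\big(f(\bx)+\frac{L_f}{2}\|\bx\|^2\big)+\langle A\bx-\bb,\by\rangle+\frac{\rho}{2}\|A\bx-\bb\|^2-\mu\langle\bx,\bz\rangle+\mathrm{const}$. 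The first bracket is convex by weak convexity of $f$, the $\langle A\bx-\bb,\by\rangle$ and $-\mu\langle\bx,\bz\rangle$ terms are affine, and $\frac{\rho}{2}\|A\bx-\bb\|^2$ is convex since $\rho\ge 0$; hence the sum is convex, giving the claimed modulus $\gamma_K=\mu-L_f$, which is positive under the parameter choice $\mu=\max\{2,4L_f\}$.

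For the Lipschitz bound I would compute $\nabla_\bx K(\bx,\by,\bz)=\nabla f(\bx)+A^\top\by+\rho A^\top(A\bx-\bb)+\mu(\bx-\bz)$ and subtract the gradients at two points $\bx_1,\bx_2$; the $\by$- and $\bb$-dependent constants cancel, leaving $(\nabla f(\bx_1)-\nabla f(\bx_2))+\rho A^\top A(\bx_1-\bx_2)+\mu(\bx_1-\bx_2)$. Taking norms and using the $L_f$-Lipschitzness of $\nabla f$ together with $\|A^\top A\|=\|A\|^2$ yields the constant $L_f+\rho\|A\|^2+\mu$.

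The second assertion is then immediate: $\bu\mapsto\frac{\lambda}{2}\|\bx-\bu\|^2$ is $\lambda$-strongly convex, so adding it to the $(\mu-L_f)$-strongly convex map $\bu\mapsto K(\bu,\by,\bz)$ produces a strongly convex function with modulus $\mu-L_f+\lambda=\gamma_s$; the identification of $\bu^*(\bx,\by,\bz)$ as its unique minimizer over $X$ is exactly the definition in \eqref{eq: aeh4}, with uniqueness guaranteed by strong convexity. There is no real obstacle here—the only point requiring care is that $f$ is assumed merely $L_f$-smooth rather than twice differentiable, so I would phrase the strong-convexity arguments through the ``convexity after subtracting a quadratic'' characterization rather than through Hessian inequalities.
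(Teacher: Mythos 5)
Your proof is correct. The paper states this as a \emph{Fact} without supplying any proof, and your argument — reducing strong convexity to convexity of $K-\frac{\mu-L_f}{2}\|\cdot\|^2$ via the $L_f$-weak convexity of $f$, reading the Lipschitz constant off the explicit gradient, and adding the $\lambda$-strong convexity of the proximal quadratic — is exactly the standard justification the authors implicitly rely on; the identification of $\bu^*(\bx,\by,\bz)$ as the unique minimizer is indeed just the definition in \eqref{eq: aeh4} together with strong convexity.
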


\begin{lemma}
    \citep[Lemma 2.19]{planiden2016strongly}\label{lem: shawn_lemma} Let $r>0$. The function f is r-strongly convex if and only if $f_1(\bx)=\min_\bu f(\bu)+\frac{1}{2}\|\bx-\bu\|^2$ is $\frac{r}{r+1}$-strongly convex.
\end{lemma}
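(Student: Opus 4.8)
The plan is to prove the equivalence entirely through Fenchel conjugate duality, which makes both directions fall out of a single exact identity. The two ingredients are: (i) the Moreau envelope dualizes to the \emph{addition} of a quadratic, and (ii) strong convexity of a function is dual to Lipschitz smoothness of its conjugate. Throughout I assume, as is implicit in the statement, that $f$ is proper, lower semicontinuous and convex (so that $f=f^{\ast\ast}$).

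First I would write $f_1 = f\,\square\,\tfrac12\|\cdot\|^2$ as an infimal convolution and use $(f\,\square\, g)^\ast = f^\ast + g^\ast$ together with $(\tfrac12\|\cdot\|^2)^\ast = \tfrac12\|\cdot\|^2$ to record the exact identity
\[
f_1^\ast = f^\ast + \tfrac12\|\cdot\|^2 .
\]
Next I invoke the standard duality: for $m>0$, a proper lsc convex $h$ is $m$-strongly convex if and only if $h^\ast$ is finite-valued and differentiable with $\tfrac1m$-Lipschitz gradient (\emph{$\tfrac1m$-smooth}). Applying this to $h=f$ with $m=r$ turns the hypothesis ``$f$ is $r$-strongly convex'' into ``$f^\ast$ is $\tfrac1r$-smooth''; applying it to $h=f_1$ with $m=\tfrac{r}{r+1}$ turns the conclusion ``$f_1$ is $\tfrac{r}{r+1}$-strongly convex'' into ``$f_1^\ast$ is $\tfrac{r+1}{r}$-smooth'', which by the identity above reads ``$f^\ast + \tfrac12\|\cdot\|^2$ is $(1+\tfrac1r)$-smooth''. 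Hence the whole lemma collapses to the single equivalence
\[
f^\ast \text{ is } \tfrac1r\text{-smooth} \iff f^\ast + \tfrac12\|\cdot\|^2 \text{ is } (1+\tfrac1r)\text{-smooth}.
\]

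The forward implication of this core equivalence is immediate: adding $\tfrac12\|\cdot\|^2$, whose gradient is the $1$-Lipschitz identity, increases the gradient's Lipschitz constant by exactly $1$. I expect the \textbf{reverse implication to be the main obstacle}, because subtracting a quadratic does not in general decrease a gradient's Lipschitz constant. The fix is to observe that $g := f^\ast + \tfrac12\|\cdot\|^2$ is not merely $(1+\tfrac1r)$-smooth but also $1$-strongly convex, since $g - \tfrac12\|\cdot\|^2 = f^\ast$ is convex. One then uses the sharp fact that for a function $g$ that is simultaneously $1$-strongly convex and $L$-smooth, the difference $g-\tfrac12\|\cdot\|^2$ is convex and $(L-1)$-smooth; in the twice-differentiable case this is just $I\preceq \nabla^2 g\preceq LI \Rightarrow 0\preceq \nabla^2 g - I \preceq (L-1)I$, and in general it follows from the interpolation inequality $\langle \nabla g(x)-\nabla g(y),x-y\rangle \ge \tfrac{L}{L+1}\|x-y\|^2 + \tfrac{1}{L+1}\|\nabla g(x)-\nabla g(y)\|^2$ after a short computation. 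Taking $L = 1+\tfrac1r$ gives that $f^\ast$ is $\tfrac1r$-smooth, closing the equivalence and hence the lemma via $f=f^{\ast\ast}$.

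As a cross-check on the constant $\tfrac{r}{r+1}$, I would note that the forward implication also admits a short direct proof avoiding conjugates: writing $\nabla f_1 = I - \prox_f$ and using that the resolvent $\prox_f = (I+\partial f)^{-1}$ of an $r$-strongly convex $f$ is $\tfrac{1}{1+r}$-Lipschitz, the Cauchy--Schwarz inequality yields $\langle \prox_f(x)-\prox_f(y),\,x-y\rangle \le \tfrac{1}{1+r}\|x-y\|^2$, whence $\langle \nabla f_1(x)-\nabla f_1(y),x-y\rangle \ge \tfrac{r}{r+1}\|x-y\|^2$. This direct argument is less convenient for the converse, however, so the conjugate route is preferable since it delivers both directions symmetrically from one identity.
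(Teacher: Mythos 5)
The paper does not actually prove this lemma; it is imported verbatim as \citep[Lemma 2.19]{planiden2016strongly}, so there is no in-paper argument to compare against. Your proof is a correct, self-contained derivation, and the route you take (conjugate duality) is the natural one for this statement. The two load-bearing facts are both right: $f_1=f\,\square\,\tfrac12\|\cdot\|^2$ gives $f_1^\ast=f^\ast+\tfrac12\|\cdot\|^2$ exactly (this identity needs no convexity), and the strong-convexity/smoothness conjugate duality converts the claim into ``$f^\ast$ is $\tfrac1r$-smooth iff $f^\ast+\tfrac12\|\cdot\|^2$ is $(1+\tfrac1r)$-smooth.'' You correctly flag that the only nontrivial step is the reverse implication, and your resolution is sound: $g=f^\ast+\tfrac12\|\cdot\|^2$ is simultaneously $1$-strongly convex and $L$-smooth with $L=1+\tfrac1r$, and the strongly-convex-and-smooth interpolation inequality yields $\langle \nabla g(\bx)-\nabla g(\by)-(\bx-\by),\,\bx-\by\rangle\geq \tfrac{1}{L-1}\|\nabla g(\bx)-\nabla g(\by)-(\bx-\by)\|^2$, which by Cauchy--Schwarz gives the $(L-1)$-Lipschitz bound on $\nabla f^\ast$; the constants close up to exactly $\tfrac{r}{r+1}$. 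The one hypothesis you add --- $f$ proper, lsc and convex so that $f=f^{\ast\ast}$ --- is genuinely needed for the ``if'' direction (otherwise the argument only recovers strong convexity of $f^{\ast\ast}$), but it is implicit in the source and harmless here: the paper invokes the lemma only in the forward direction, in \Cref{lem: sc_moreau}, applied to the convex function $(K(\cdot,\by,\bz)+I_X)/\lambda$. Your closing resolvent-based cross-check of the forward direction ($\nabla f_1=I-\prox_f$ with $\prox_f$ being $\tfrac{1}{1+r}$-cocoercive) is also correct and is arguably the shortest proof of the direction the paper actually uses.
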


\begin{lemma}\label{lem: sc_moreau}
The function  $\bx\mapsto\varphi_{1/\lambda}(\bx,\by,\bz)$ is $\gamma=\frac{(\mu-L_f)\lambda}{\mu-L_f+\lambda}$-strongly convex.
\end{lemma}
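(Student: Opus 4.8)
The plan is to recognize $\bx \mapsto \varphi_{1/\lambda}(\bx, \by, \bz)$ as the Moreau envelope (with proximal parameter $1/\lambda$) of the function $g(\bu) := K(\bu, \by, \bz) + I_X(\bu)$, and then invoke the strongly-convex-envelope result of \Cref{lem: shawn_lemma}. Since that lemma is stated only for proximal parameter $1$ (i.e. for the coefficient $\frac{1}{2}$ in front of the quadratic), the bulk of the work is a change of variables that normalizes the parameter $\lambda$ to $1$ while correctly tracking how strong-convexity moduli transform.

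First I would record that $g$ is $\gamma_K = (\mu - L_f)$-strongly convex: by \Cref{fact: params}, $\bu \mapsto K(\bu, \by, \bz)$ is $(\mu - L_f)$-strongly convex (this uses $\mu > L_f$, which holds under the parameter choice $\mu = \max\{2, 4L_f\}$), and adding the convex indicator $I_X$ leaves the modulus unchanged. With this, $\varphi_{1/\lambda}(\bx, \by, \bz) = \min_{\bu} g(\bu) + \frac{\lambda}{2}\|\bu - \bx\|^2$. Next I would perform the scaling $\bu = \bv/\sqrt{\lambda}$, $\bx = \bw/\sqrt{\lambda}$, which turns the coefficient $\frac{\lambda}{2}$ into $\frac{1}{2}$:
\[
\varphi_{1/\lambda}(\bw/\sqrt{\lambda}, \by, \bz) = \min_{\bv}\; \tilde{g}(\bv) + \frac{1}{2}\|\bv - \bw\|^2, \qquad \tilde{g}(\bv) := g(\bv/\sqrt{\lambda}).
\]
Because strong convexity is transported by the linear map $\bv \mapsto \bv/\sqrt\lambda$, the rescaled $\tilde{g}$ is $r$-strongly convex with $r = (\mu - L_f)/\lambda$ (checking: if $g - \frac{\mu-L_f}{2}\|\cdot\|^2$ is convex, then $\tilde g(\bv) - \frac{\mu-L_f}{2\lambda}\|\bv\|^2$ is convex as the precomposition with a linear map). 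Now \Cref{lem: shawn_lemma} applies directly and shows that $\bw \mapsto \varphi_{1/\lambda}(\bw/\sqrt\lambda, \by, \bz)$ is $\frac{r}{r+1}$-strongly convex, i.e. with modulus $\frac{\mu - L_f}{\mu - L_f + \lambda}$.

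Finally I would undo the scaling: writing $\bw = \sqrt{\lambda}\,\bx$, the map $\bx \mapsto \varphi_{1/\lambda}(\bx, \by, \bz)$ is the composition of a $\frac{\mu - L_f}{\mu - L_f + \lambda}$-strongly convex function with $\bx \mapsto \sqrt{\lambda}\,\bx$, so its modulus picks up a factor $\lambda$, yielding $\frac{(\mu - L_f)\lambda}{\mu - L_f + \lambda} = \gamma$, as claimed. The only delicate point — and the place where an error is easiest to make — is the bookkeeping of constants through the two scalings, namely verifying that $\tilde g$ has modulus $(\mu - L_f)/\lambda$ and that the final composition multiplies the modulus back by $\lambda$; once these are pinned down the result is immediate. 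I expect this constant-tracking, rather than any conceptual difficulty, to be the main obstacle, since the underlying fact (the Moreau envelope of an $m$-strongly convex function is $\frac{m\lambda}{m+\lambda}$-strongly convex) is exactly the general-parameter version of \Cref{lem: shawn_lemma} that the scaling argument establishes.
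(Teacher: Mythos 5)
Your proof is correct and follows essentially the same route as the paper: both reduce to \Cref{lem: shawn_lemma} by normalizing the proximal parameter to $1$ and then transferring the strong-convexity modulus back, using the $(\mu-L_f)$-strong convexity of $K(\cdot,\by,\bz)+I_X$ from \Cref{fact: params}. The only difference is the normalization itself --- you rescale the variables by $\sqrt{\lambda}$, whereas the paper factors $\lambda$ out of the objective, writing $\varphi_{1/\lambda}=\lambda\min_{\bu}\big\{\tfrac{K(\bu,\by,\bz)+I_X(\bu)}{\lambda}+\tfrac12\|\bx-\bu\|^2\big\}$, which makes the constant-tracking a one-line computation.
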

\begin{proof}
By definition, we have
\begin{equation*}
\varphi_{1/\lambda}(\bx,\by,\bz)=\min_\bu \big\{K(\bu,\by,\bz)+I_X(\bu)+\frac{\lambda}{2}\|\bx-\bu\|^2\big\}=\lambda \min_\bu \Big\{\frac{K(\bu,\by,\bz)+I_X(\bu)}{\lambda}+\frac{1}{2}\|\bx-\bu\|^2\Big\}.
\end{equation*}
Recall that $\gamma_K = \mu-L_f$. Then,    since $K(\bx,\by,\bz)/\lambda$ is $\frac{\gamma_K}{\lambda}$-strongly convex, we have $\min_{\bu} \frac{K(\bu,\by,\bz)+I_X(\bu)}{\lambda}+\frac{1}{2}\|\bx-\bu\|^2$ is $\frac{\gamma_K/\lambda}{\gamma_K/\lambda+1}$-strongly convex, by \Cref{lem: shawn_lemma}.
    Hence, $\varphi_{1/\lambda}(\bx,\by,\bz)$ is strongly convex with modulus $\frac{ \gamma_K}{\gamma_K/\lambda+1}=\frac{\lambda \gamma_K }{\lambda+\gamma_K}=\frac{(\mu-L_f)\lambda}{\mu-L_f+\lambda}$.
\end{proof}

\begin{lemma}\label{lem: lb}
    If $\bx\in X$, we have $\varphi_{1/\lambda}(\bx,\by,\bz)-2d(\by,\bz)+2\Psi(\bz)\geq \underline{f}$.
    \end{lemma}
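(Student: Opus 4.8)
The plan is to bound the three pieces of the potential separately and chain them, the point being that the dual variable $\by$ drops out entirely. First I would discard the proximal penalty in $\varphi_{1/\lambda}$: since $\tfrac{\lambda}{2}\|\bu-\bx\|^2\ge 0$, the definition \eqref{eq: varphi_def} immediately yields
\[
\varphi_{1/\lambda}(\bx,\by,\bz)\;\ge\;\min_{\bu\in X}\Big\{L_\rho(\bu,\by)+\tfrac{\mu}{2}\|\bu-\bz\|^2\Big\}\;=\;d(\by,\bz),
\]
using the definition \eqref{eq: def_d} of $d$. Consequently $\varphi_{1/\lambda}(\bx,\by,\bz)-2d(\by,\bz)\ge -d(\by,\bz)$, and it remains only to show $2\Psi(\bz)-d(\by,\bz)\ge \underline f$.

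The key step, and the one worth isolating, is the comparison $d(\by,\bz)\le \Psi(\bz)$, which I would obtain by evaluating the objective defining $d(\by,\bz)$ at the \emph{constrained} minimizer $\bar{\bx}^*(\bz)$ from Lemma \ref{lem: swo4}. This point is admissible for the minimization defining $d$ since $\bar{\bx}^*(\bz)\in X$, and crucially it is feasible, $A\bar{\bx}^*(\bz)-\bb=0$. Therefore the linear and augmentation terms in $L_\rho(\bar{\bx}^*(\bz),\by)$ vanish, leaving $L_\rho(\bar{\bx}^*(\bz),\by)=f(\bar{\bx}^*(\bz))$ for \emph{every} $\by$. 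Since $d$ is a minimum over $X$, this gives
\[
d(\by,\bz)\;\le\;f(\bar{\bx}^*(\bz))+\tfrac{\mu}{2}\|\bar{\bx}^*(\bz)-\bz\|^2\;=\;\Psi(\bz),
\]
the last equality being the definition \eqref{eq: moreau_orig} of $\Psi$ (using $\mu\le\lambda$ if one reads the quadratic coefficient in \eqref{eq: moreau_orig} as $\lambda$, since $\lambda=L_K\ge\mu$). This is the only place any structure is used, and it is exactly the cancellation of the Lagrangian terms at a feasible point.

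Finally I would lower-bound $\Psi$ itself: any feasible competitor $\bv$ satisfies $f(\bv)\ge\underline f$ by Assumption \ref{asmp: 1} and the proximal term is nonnegative, so $\Psi(\bz)\ge\underline f$. Chaining the three bounds,
\[
\varphi_{1/\lambda}(\bx,\by,\bz)-2d(\by,\bz)+2\Psi(\bz)\;\ge\;-d(\by,\bz)+2\Psi(\bz)\;\ge\;-\Psi(\bz)+2\Psi(\bz)\;=\;\Psi(\bz)\;\ge\;\underline f,
\]
which is the claim. There is no real obstacle here: once the Lagrangian cancellation at $\bar{\bx}^*(\bz)$ is observed, the argument is just a composition of one nonnegativity bound, one feasibility-based comparison, and the lower bound on $f$, with no parameter tuning required. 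The only care needed is noting that the feasible set is nonempty (so $\bar{\bx}^*(\bz)$ is well defined), which is implicit in the statement of Assumption \ref{asmp: 1} that $f\ge\underline f$ over it.
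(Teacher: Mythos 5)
Your proof is correct and follows essentially the same route as the paper's: both reduce the claim to the chain $\varphi_{1/\lambda}(\bx,\by,\bz)\ge d(\by,\bz)$, $d(\by,\bz)\le\Psi(\bz)$ (via the feasible point $\bar{\bx}^*(\bz)$, at which the Lagrangian and augmentation terms cancel), and $\Psi(\bz)\ge\underline f$. The only cosmetic difference is that you obtain $\varphi_{1/\lambda}\ge d$ by dropping the nonnegative proximal term, whereas the paper evaluates $\varphi_{1/\lambda}$ at its minimizer $\bx^*(\by,\bz)$; both are valid.
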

    \begin{proof}
Because $\bx^*(\by, \bz)$ minimizes $\varphi_{1/\lambda}(\cdot, \by, \bz)$ (see for example \citep[Theorem XV4.1.7]{hiriartconvex}), we have
\begin{equation*}
\varphi_{1/\lambda} (\bx, \by, \bz) \geq \varphi_{1/\lambda}(\bx^*(\by, \bz)) = K(\bx^*(\by, \bz)).
\end{equation*}
We can then deduce
        \begin{align*}
            \varphi_{1/\lambda}(x,y,z)-2d(y,z)+2\Psi(z)&\geq K(x(y,z),y,z)-2d(y,z)+2\Psi(z)\\
            &=d(y,z)-2d(y,z)+2\Psi(z)\\
            &=\Psi(z)+\Psi(z)-d(y,z)\\
            &\geq \Psi(z)\\
            &\geq \underline{f}
        \end{align*}

The second inequality in the above chain comes from definition, that is, denoting $\bx^*_{\mu} = \argmin_{x\in X, A\bx=\bb}\{f(\bx) + \frac{\mu}{2}\|\bx-\bz\|^2\}$ in view of \eqref{eq: moreau_orig}, we have
\begin{equation*}
d(\by, \bz) = \min_{\bx\in X} K(\bx, \by, \bz) \leq K(\bx^*_\mu, \by, \bz) = f(\bx^*_\mu) + \frac{\mu}{2} \| \bx^*_\mu-\bz\|^2 = \Psi(\bz),
\end{equation*}
where the first inequality also uses $\bx_\mu^*\in X$, which is by definition.
    \end{proof}

\section{Proofs for \Cref{sec: rand_sec}}\label{sec:appendix_stoc_const}
\subsection{Proof of Theorem \ref{thm:stochasitc linear constraints}}
\begin{lemma}\label{stochasitc d(y,z)}
Let \Cref{assumptions for stochasitc} hold. With the update rule of $\by_{t+1}=\by_t+\eta(A_\zeta \bx_t-\bb_\zeta)$, where $\mathbb{E}_\zeta[A_\zeta \bx_t-\bb_\zeta] = A\bx_t - \bb$, we have
        \begin{equation}
            \begin{aligned}
            \mathbb{E}d(\by_{t+1},\bz_{t+1})-\mathbb{E}d(\by_t.\bz_t) &\geq \eta \mathbb{E}\langle(A\bx_t-\bb), Ax(\by_{t+1},\bz_t)-b \rangle-\frac{\eta^2}{32}\mathbb{E}\|A\bx_t-\bb\|^2-(\frac{1}{2}+\frac{17\|A\|^2}{2\gamma_K^2})\eta^2L^2\\
            &\quad +\frac{\mu}{2}\mathbb{E}\langle \bz_{t+1}-\bz_{t}, \bz_{t+1}+\bz_{t}-2\bx(\by_{t+1},\bz_{t+1}) \rangle,\\
            \mathbb{E}\Psi(\bz_{t+1})-\mathbb{E}\Psi(\bz_t)&\leq \mu\mathbb{E}\langle \bz_{t+1}-\bz_t, \bz_t-\bar{\bx}^*(\bz_t)\rangle+\frac{\mu}{2\sigma_4}\mathbb{E}\|\bz_t-\bz_{t+1}\|^2
                    \end{aligned}
        \end{equation}
where $\gamma_K, \sigma_4$ are introduceed in \ref{error bound}, and we assume $\mathbb{E}\|A_{\zeta_t}\bx_t-\bb_{\zeta_t}\|^2\leq L$.
    \end{lemma}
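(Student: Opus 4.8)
The plan is to follow the deterministic proof of \Cref{lem: swo4} and isolate the two places where the sampled constraint $(A_{\zeta_t},\bb_{\zeta_t})$ enters. First I would split the change of $d$ as
\[
d(\by_{t+1},\bz_{t+1}) - d(\by_t,\bz_t) = \big[d(\by_{t+1},\bz_t) - d(\by_t,\bz_t)\big] + \big[d(\by_{t+1},\bz_{t+1}) - d(\by_{t+1},\bz_t)\big],
\]
so the first bracket captures the (now stochastic) dual step and the second the change of the proximal center $\bz$. The second bracket is handled exactly as in the deterministic case: since $\bx^*(\by_{t+1},\bz_t)$ minimizes $L_\rho(\cdot,\by_{t+1})+\tfrac{\mu}{2}\|\cdot-\bz_t\|^2$ over $X$, comparing its optimal value at $\bz_t$ against the feasible point $\bx^*(\by_{t+1},\bz_{t+1})$ and expanding the squares gives
\[
d(\by_{t+1},\bz_{t+1}) - d(\by_{t+1},\bz_t) \geq \tfrac{\mu}{2}\langle \bz_{t+1}-\bz_t,\, \bz_{t+1}+\bz_t - 2\bx^*(\by_{t+1},\bz_{t+1})\rangle,
\]
which is agnostic to how $\by_{t+1}$ is generated and so survives taking expectations verbatim; this is the last term in the claim. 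The bound for $\Psi$ does not involve $\by$ at all, hence it is identical to the one in \Cref{lem: swo4} and holds in expectation without change.

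The core of the argument is the first bracket. Since $d(\cdot,\bz_t)$ is concave (a pointwise minimum of functions affine in $\by$) and differentiable with $\nabla_\by d(\by,\bz_t) = A\bx^*(\by,\bz_t)-\bb$ by Danskin's theorem, concavity at the new point yields
\[
d(\by_{t+1},\bz_t) - d(\by_t,\bz_t) \geq \langle A\bx^*(\by_{t+1},\bz_t)-\bb,\, \by_{t+1}-\by_t\rangle = \eta\langle A\bx^*(\by_{t+1},\bz_t)-\bb,\, A_{\zeta_t}\bx_t - \bb_{\zeta_t}\rangle.
\]
I would then write $A_{\zeta_t}\bx_t-\bb_{\zeta_t} = (A\bx_t-\bb) + \be_t$ with $\mathbb{E}_{\zeta_t}[\be_t]=0$ and $\mathbb{E}_{\zeta_t}\|\be_t\|^2 \leq \mathbb{E}_{\zeta_t}\|A_{\zeta_t}\bx_t-\bb_{\zeta_t}\|^2 \leq L^2$. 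The contribution of $A\bx_t-\bb$ is exactly the main term $\eta\langle A\bx_t-\bb,\, A\bx^*(\by_{t+1},\bz_t)-\bb\rangle$ of the statement, so everything reduces to lower bounding the stochastic cross term $\eta\,\mathbb{E}\langle A\bx^*(\by_{t+1},\bz_t)-\bb,\, \be_t\rangle$.

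The main obstacle is precisely this cross term: unlike in the deterministic analysis, the factor $\bx^*(\by_{t+1},\bz_t)$ depends on the fresh sample $\zeta_t$ through $\by_{t+1}$, so it is not conditionally independent of $\be_t$ and the expectation does not vanish outright. To decouple them I would subtract the surrogate $\bx^*(\by_t,\bz_t)$, which is measurable before $\zeta_t$; its inner product with $\be_t$ has zero conditional expectation, leaving
\[
\eta\,\mathbb{E}\langle A\bx^*(\by_{t+1},\bz_t)-\bb,\, \be_t\rangle = \eta\,\mathbb{E}\langle A\big(\bx^*(\by_{t+1},\bz_t)-\bx^*(\by_t,\bz_t)\big),\, \be_t\rangle.
\]

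I would then bound this by Cauchy--Schwarz, control $\|\bx^*(\by_{t+1},\bz_t)-\bx^*(\by_t,\bz_t)\|$ through the Lipschitz estimate \eqref{x(y',z)-x(y,z)}, namely $\tfrac{\|A\|}{\gamma_K}\|\by_{t+1}-\by_t\| = \tfrac{\|A\|\eta}{\gamma_K}\|A_{\zeta_t}\bx_t-\bb_{\zeta_t}\|$, and finally split $\|A_{\zeta_t}\bx_t-\bb_{\zeta_t}\| \leq \|A\bx_t-\bb\| + \|\be_t\|$ followed by Young's inequality. Choosing the Young parameter so that the $\|A\bx_t-\bb\|^2$ coefficient collapses to $\tfrac{\eta^2}{32}$ produces the error terms $-\tfrac{\eta^2}{32}\mathbb{E}\|A\bx_t-\bb\|^2$ and $-\big(\tfrac12 + \tfrac{17\|A\|^2}{2\gamma_K^2}\big)\eta^2 L^2$, where the second-moment bound $\mathbb{E}\|\be_t\|^2\le L^2$ converts every remaining stochastic norm into $L^2$. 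Summing the three pieces and taking full expectations gives the claimed inequality for $d$, and the $\Psi$ estimate is inherited directly from \Cref{lem: swo4}; the only genuinely new ingredient relative to the deterministic proof is the surrogate-subtraction step that removes the dependence of $\bx^*(\by_{t+1},\bz_t)$ on the dual sample.
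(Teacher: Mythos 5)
Your proof is correct and takes essentially the same route as the paper: both identify the obstacle that $\bx^*(\by_{t+1},\bz_t)$ depends on the fresh sample $\zeta_t$ and resolve it by passing through the surrogate $\bx^*(\by_t,\bz_t)$ via the Lipschitz estimate \eqref{x(y',z)-x(y,z)} together with Young's inequality, while the $\bz$-part and the $\Psi$-bound are inherited verbatim from the deterministic case. The only difference is bookkeeping — the paper swaps the second argument of the inner product to $\bx^*(\by_t,\bz_t)$, takes the conditional expectation, and then swaps back, whereas you decompose the dual increment into its mean and a zero-mean noise term and apply the surrogate trick only to the noise — so your explicit constants in the $\eta^2L^2$ error term would come out slightly different from $\bigl(\tfrac12+\tfrac{17\|A\|^2}{2\gamma_K^2}\bigr)$, but of the same order, which is all that the downstream argument uses.
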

    \begin{proof}

It is easy to derive, for example as \citep[Lemma 3.2]{zhang2020proximal}, that
\begin{equation*}
d(\by_{t+1}, \bz_{t+1}) - d(\by_t, \bz_t) \geq  \langle \by_{t+1} - \by_t, A \bx^*(\by_{t+1}, \bz_t) - \bb \rangle + \frac{\mu}{2} \langle \bz_{t+1} - \bz_t, \bz_{t+1} + \bz_t - 2 \bx^*(\by_{t+1}, \bz_{t+1}) \rangle.
\end{equation*}
Hence, by using the update rule of $\by_{t+1}$, we get
\begin{align*}
    d(\by_{t+1}, \bz_{t+1}) - d(\by_t, \bz_t) &\geq \langle \by_{t+1} - \by_t, A \bx^*(\by_{t}, \bz_t) - \bb \rangle + \langle \by_{t+1} - \by_t, A \bx^*(\by_{t+1}, \bz_t) - A \bx^*(\by_{t}, \bz_t) \rangle \\
    &\quad+ \frac{\mu}{2} \langle \bz_{t+1} - \bz_t, \bz_{t+1} + \bz_t - 2 \bx^*(\by_{t+1}, \bz_{t+1}) \rangle \\
    &\geq \langle \by_{t+1} - \by_t, A \bx^*(\by_{t}, \bz_t) - \bb \rangle - \frac{1}{2} \|\by_{t+1} - \by_t\|^2 - \frac{1}{2} \|A \bx^*(\by_{t+1}, \bz_t) - A \bx^*(\by_{t}, \bz_t)\|^2 \\
    &\quad + \frac{\mu}{2} \langle \bz_{t+1} - \bz_t, \bz_{t+1} + \bz_t - 2 \bx^*(\by_{t+1}, \bz_{t+1}) \rangle \\
    &\geq \langle \eta(A(\omega_t) \bx_t - \bb(\omega_t)), A \bx^*(\by_{t}, \bz_t) - \bb \rangle - (\frac{1}{2}+\frac{\|A\|^2}{2 \gamma_K^2}) \eta^2 L^2  \\
    &\quad + \frac{p}{2} \langle \bz_{t+1} - \bz_t, \bz_{t+1} + \bz_t - 2 \bx^*(\by_{t+1}, \bz_{t+1}) \rangle,
\end{align*}
where we introduce a term $A\bx^*(\by_t,\bz_t)$ in the first inequality. Then we use Cauchy-Schwarz inequality in the second step, and the last inequality comes from (\ref{x(y',z)-x(y,z)}), $ \|A \bx^*(\by_{t+1}, \bz_t) - A \bx^*(\by_{t}, \bz_t)\|^2\leq \frac{\|A\|^2}{ \gamma_K^2} \|\by_{t+1} - \by_t\|^2 $ and the bound of $\mathbb{E}\|A_{\zeta_t}\bx_t-\bb_{\zeta_t}\|^2$.

After taking expectation and using tower property along with $\by_t, \bz_t$ being deterministic under the conditioning, we have
\begin{equation}
    \begin{aligned} \label{eq: stochasitc d(y,z)}
    \mathbb{E} d(\by_{t+1}, \bz_{t+1}) - \mathbb{E} d(\by_t, \bz_t) 
    &\geq \eta \mathbb{E}\langle (A \bx_t - \bb), A \bx^*(\by_{t}, \bz_t) - \bb \rangle - \left(\frac{1}{2}+\frac{\|A\|^2}{2 \gamma_K^2}\right) \eta^2 L^2 \\
    &\quad + \frac{\mu}{2} \mathbb{E} \langle \bz_{t+1} - \bz_t, \bz_{t+1} + \bz_t - 2 \bx^*(\by_{t+1}, \bz_{t+1}) \rangle 
\end{aligned}
\end{equation}

Then we estimate the first term in the above inequality. We have

\begin{equation}\label{eq: stochasitc d(y,z) first term}
    \begin{aligned}
    &\eta \mathbb{E}\langle A \bx_t - \bb, A \bx^*(\by_{t}, \bz_t)-\bb\rangle\notag \\
    &= \eta \mathbb{E}[\langle (A \bx_t - \bb), A \bx^*(\by_{t+1}, \bz_t) - \bb \rangle + \eta \langle (A \bx_t - \bb), A \bx^*(\by_{t+1}, \bz_t) - A \bx^*(\by_{t}, \bz_t) \rangle]\\
    &\geq \eta \mathbb{E}[\langle (A \bx_t - \bb), A \bx^*(\by_{t+1}, \bz_t) - \bb \rangle  - 8 \| A \bx^*(\by_{t+1}, \bz_t) - A \bx^*(\by_{t}, \bz_t) \|^2]-\frac{\eta^2}{32}\|A\bx_t-\bb\|^2\\
    &\geq \eta \mathbb{E}[\langle (A \bx_t - \bb), A \bx^*(\by_{t+1}, \bz_t) - \bb \rangle-\frac{8\|A\|^2}{\gamma_k^2}\eta^2L^2]-\frac{\eta^2}{32}\|A\bx_t-\bb\|^2,
\end{aligned}
\end{equation}
where we introduce a term $A\bx^*(\by_{t+1},\bz_t)$ to get the first equality. The second inequality comes from Young inequality( \(\langle a,b \rangle \leq \frac{1}{32} \|a\|^2 + 8 \|b\|^2 \forall a,b\)). In last inequality, we use (\ref{x(y',z)-x(y,z)}) and \(\mathbb{E}[\|A_{\zeta_t}\bx_t - \bb_{\zeta_t}\|^2] \leq L\) again.

Finally, plug  \eqref{eq: stochasitc d(y,z) first term} to \eqref{eq: stochasitc d(y,z)}, we obtain the desired result.
    \end{proof}

        \begin{lemma}\label{lem: appendix_descent_stochasitc}
    Let Assumption \ref{asmp: 1} and \ref{assumptions for stochasitc} hold. By using the parameters \eqref{eq: param_defs_appendix} in Algorithm \ref{alg: sgd} with the dual update changed to $\by_{t+1}=\by_t+\eta(A_\zeta \bx_t-\bb_\zeta)$, we obtain
        \begin{equation} \label{desent lemma stochasitc}
            \begin{aligned}
                 \mathbb{E}V_t-\mathbb{E}V_{t+1}&\geq \widetilde{c}_\beta \mathbb{E}\|\bz_{t+1}-\bz_t\|^2+\widetilde{c}_\tau\mathbb{E}\|\bu^*(\bx_t, \by_{t+1}, \bz_{t})-\bx_t\|^2+\widetilde{c}_\eta \mathbb{E}\|A\bx^*(\by_{t+1}, \bz_t)-\bb\|^2\\
                 &-\lambda \tau^2\sigma_2^2-(1+\frac{17\|A\|^2}{\gamma_K^2})\eta^2L^2
            \end{aligned}
        \end{equation}
        where $\widetilde{c}_\beta = \frac{\mu}{50\beta}$, $\widetilde{c}_\tau = \frac{6\tau\lambda^2}{400}$, $\widetilde{c}_\eta = \frac{\eta}{8}$ and $\mathbb{E}\|\hat{G}(\bx_t,\by_t,\bz_t,\xi_t)-\nabla_x K(\bx_t,\by_t,\bz_t)\|^2\leq \sigma_2^2$

        \begin{proof}
            First, we show $\mathbb{E}\|\hat{G}(\bx_t,\by_t,\bz_t,\xi_t)-\nabla_x K(\bx_t,\by_t,\bz_t)\|^2$ is bounded. 
            
            \begin{align*}
                &\mathbb{E}\|\hat{G}(\bx_t,\by_t,\bz_t,\xi_t)-\nabla_x K(\bx_t,\by_t,\bz_t)\|^2\\
                &\leq \mathbb{E}2\|\hat{G}(\bx_t,\by_t,\bz_t,\xi_t)-\hat{G}(\bx_t,0,\bz_t,\xi_t)\|^2+\mathbb{E}2\|\hat{G}(\bx_t,0,\bz_t,\xi_t)-K(\bx_t,\by_t,\bz_t)\|^2\\
                &\leq 2\mathbb{E}L_G\|\by_t\|^2+2\mathbb{E}\|\hat{G}(\bx_t,0,\bz_t,\xi_t)-\nabla_xK(\bx_t,\by_t,\bz_t)\|^2\\
                &\leq 2\mathbb{E}L_G\|\by_t\|^2+4\mathbb{E}\|\hat{G}(\bx_t,0,\bz_t,\xi_t)-\nabla_xK(\bx_t,0,\bz_t)\|^2+4\mathbb{E}\|\nabla_xK(\bx_t,0,\bz_t)-\nabla_xK(\bx_t,\by_t,\bz_t)\|^2\\
                &\leq 2L_G M_y^2+4\|A\|^2\|\by_t\|^2+4\mathbb{E}\|\hat{G}(\bx_t,0,\bz_t,\xi_t)-\nabla_xK(\bx_t,0,\bz_t)\|^2
            \end{align*}

            Because $\bx,\by,\bz$ are all bounded, $\mathbb{E}\|\hat{G}(\bx_t,\by_t,\bz_t,\xi_t)-\nabla_x K(\bx_t,\by_t,\bz_t)\|^2$ is bounded, we denote the upper bound as $\sigma_2^2$.

            Combining with deterministic linear result, we have:
            \begin{align*}
                \mathbb{E}V_t-\mathbb{E}V_{t+1}&\geq c_\beta \mathbb{E}\|\bz_{t+1}-\bz_t\|^2+c_\tau\mathbb{E}\|\bu^*(\bx_t, \by_{t+1}, \bz_{t})-\bx_t\|^2+c_\eta \mathbb{E}\|A\bx^*(\by_{t+1}, \bz_t)-\bb\|^2-\lambda \tau^2\sigma_2^2\\
                &-\frac{\eta^2}{16}\mathbb{E}\|A\bx_t-\bb\|^2-(1+\frac{17\|A\|^2}{\gamma_K^2})\eta^2L^2
            \end{align*}

             where $c_\beta = \frac{\mu}{50\beta}$, $c_\tau = \frac{7\tau\lambda^2}{400}$, $c_\eta = \frac{\eta}{4}$.

        Because
        $$-\frac{\eta^2}{16}\mathbb{E}\|A\bx_t-\bb\|^2 \geq -\frac{\|A\|^2\lambda^2 \eta^2}{8\gamma^2}\|\bx_t-\bu^*(\bx_t,\by_{t+1},\bz_t)\|^2-\frac{\eta^2}{8}\|A\bx(\by_{t+1},\bz_t)-b\|^2 $$

        By the parameter choices, we have $\frac{7\tau \lambda^2}{400}-\frac{\|A\|^2\lambda^2 \eta^2}{8\gamma^2}\geq \frac{6\tau \lambda^2}{400}$ and $\frac{\eta}{4}-\frac{\eta^2}{8}\geq \frac{\eta}{8}$.
        \end{proof}
    \end{lemma}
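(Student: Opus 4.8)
The plan is to follow the proof of \Cref{lem: appendix_descent} line by line, isolating the only two places where the stochastic dual update $\by_{t+1}=\by_t+\eta(A_{\zeta_t}\bx_t-\bb_{\zeta_t})$ and the sampled gradient \eqref{eq: def_g_stoc} actually enter, and then absorbing the two extra error terms they generate using the error bounds of \Cref{lemma of error}. Concretely, the primal descent on $\varphi_{1/\lambda}$ (\Cref{lem: swo3}, \Cref{lem: phi_misc}, \Cref{cor: asv4}) and the bound on $\Psi$ are structurally unchanged, since none of them use the deterministic form of the dual update; what changes is the one-step estimate of $d(\by,\bz)$ and the variance of the gradient oracle.

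The first and, I expect, the main obstacle is to establish the finite variance constant $\sigma_2^2$ with $\mathbb{E}\|\hat{G}(\bx_t,\by_t,\bz_t,\xi_t)-\nabla_\bx K(\bx_t,\by_t,\bz_t)\|^2\leq\sigma_2^2$. Because $\hat G$ in \eqref{eq: def_g_stoc} contains the products $A_{\zeta^1}^\top\by$ and $A_{\zeta^1}^\top(A_{\zeta^2}\bx-\bb_{\zeta^2})$, its variance grows with $\|\bx\|$ and $\|\by\|$, so a uniform bound is only available once both iterates are confined to bounded sets. Here the compactness of $X$ from \Cref{assumptions for stochasitc} controls $\bx_t$ (and $\bz_t$, which is a convex combination of iterates in $X$), while the safeguarding step of \Cref{alg: sgd2} enforces $\|\by_t\|\leq M_y$. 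I would then split $\hat G-\nabla_\bx K$ with Young's inequality into a piece governed by $\|\by_t\|\leq M_y$ and the variance at $\by=0$ (bounded by a constant via the oracle assumption), yielding an explicit finite $\sigma_2$. This is delicate precisely because it is the only step invoking the new structural assumptions, and one must check that the safeguard keeps the dual bounded while the descent itself is read off along a standard (non-reset) iteration, the genuine reset steps being accounted for at the level of the theorem.

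Second, I would re-derive the one-step change of $d(\by,\bz)$ under the sampled update (this is \Cref{stochasitc d(y,z)}). Starting from the deterministic inequality $d(\by_{t+1},\bz_{t+1})-d(\by_t,\bz_t)\geq\langle\by_{t+1}-\by_t, A\bx^*(\by_{t+1},\bz_t)-\bb\rangle+\tfrac{\mu}{2}\langle\cdots\rangle$, I would insert $A\bx^*(\by_t,\bz_t)$, take conditional expectation so that $\mathbb{E}[\by_{t+1}-\by_t]=\eta(A\bx_t-\bb)$, and control the cross terms with the dual-Lipschitzness of $\bx^*(\cdot,\bz)$ from \Cref{error bound} together with the bound $\mathbb{E}\|A_{\zeta_t}\bx_t-\bb_{\zeta_t}\|^2\leq L$. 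This recovers the useful inner product $\eta\langle A\bx_t-\bb, A\bx^*(\by_{t+1},\bz_t)-\bb\rangle$ plus two new penalties absent in the deterministic case: a purely additive $O(\eta^2L^2)$ term and a $-\tfrac{\eta^2}{32}\mathbb{E}\|A\bx_t-\bb\|^2$ term coming from the Young split.

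Finally, I would assemble the pieces exactly as in \Cref{lem: appendix_descent}. Because $d$ enters $V_t$ with weight $-2$, the two new penalties become the additive constant $-(1+\tfrac{17\|A\|^2}{\gamma_K^2})\eta^2L^2$ and the term $-\tfrac{\eta^2}{16}\mathbb{E}\|A\bx_t-\bb\|^2$, while the variance $\sigma^2$ is replaced by $\sigma_2^2$. To dispose of the $\|A\bx_t-\bb\|^2$ term I would invoke \eqref{Ax_t-b}, which bounds it by a multiple of $\|\bx_t-\bu^*(\bx_t,\by_{t+1},\bz_t)\|^2$ plus a multiple of $\|A\bx^*(\by_{t+1},\bz_t)-\bb\|^2$, thereby redistributing the negative contribution into the coefficients of exactly those two squared quantities. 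It then remains to verify, under the schedule \eqref{eq: param_defs_appendix} (in particular $\eta\lesssim\tau/\|A\|^2$), the two scalar inequalities $\tfrac{7\tau\lambda^2}{400}-\tfrac{\|A\|^2\lambda^2\eta^2}{8\gamma^2}\geq\tfrac{6\tau\lambda^2}{400}$ and $\tfrac{\eta}{4}-\tfrac{\eta^2}{8}\geq\tfrac{\eta}{8}$, which secure the claimed coefficients $\widetilde{c}_\tau=\tfrac{6\tau\lambda^2}{400}$ and $\widetilde{c}_\eta=\tfrac{\eta}{8}$ while leaving $\widetilde{c}_\beta=\tfrac{\mu}{50\beta}$ untouched.
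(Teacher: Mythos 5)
Your proposal is correct and follows essentially the same route as the paper: bound the oracle variance by $\sigma_2^2$ using compactness of $X$ and the dual safeguard (splitting through the $\by=0$ point), modify only the one-step estimate of $d(\by,\bz)$ to pick up the additive $O(\eta^2L^2)$ term and the $-\tfrac{\eta^2}{32}\mathbb{E}\|A\bx_t-\bb\|^2$ penalty (doubled to $\tfrac{\eta^2}{16}$ by the weight $-2$ on $d$ in $V_t$), absorb the latter via \eqref{Ax_t-b} into the coefficients of $\|\bx_t-\bu^*(\bx_t,\by_{t+1},\bz_t)\|^2$ and $\|A\bx^*(\by_{t+1},\bz_t)-\bb\|^2$, and verify the same two scalar inequalities. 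No gaps.
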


        \begin{proposition}\label{prop: y_bdd}
Under \Cref{assumptions for stochasitc},            $\|\by_t\|\leq\frac{\Psi(\bz_t)-d(\by_t,\bz_t)+2M}{r}$, where $M=\max_{\bx,\bz\in X}\{|f(\bx)|+\frac{\mu}{2}\|\bx-\bz\|^2+\frac{\rho}{2}\|A\bx-\bb\|^2\}$ and $r>0$ is defined as $\|A\hat\bx-\bb\|=r$ where $\hat\bx$ is in the relative interior of the constraints. The existence of this is guaranteed by our assumption.
            \begin{proof}
Given $\widetilde{\bx}\in X$, we have
            \begin{align*}
                \Psi(\bz_t)-d(\by_t,\bz_t) &\geq f(\bar{\bx}^*(\bz_t))+\frac{\mu}{2}\|\bar{\bx}^*(\bz_t)-\bz_t\|^2-K(\widetilde{\bx},\by_t,\bz_t)\\
                &\geq f(\bar{\bx}^*(\bz_t))+\frac{\mu}{2}\|\bar{\bx}^*(\bz_t)-\bz_t\|^2-[f(\widetilde{\bx})+\langle \by_t,A\widetilde{\bx} \rangle+\frac{\rho}{2}\|A\widetilde{\bx}-\bb\|^2+\frac{\mu}{2}\|\widetilde{\bx}-\bz_t\|^2]\\
                &= [ f(\bar{\bx}^*(\bz_t))+\frac{\mu}{2}\|\bar{\bx}^*(\bz_t)-\bz_t\|^2-f(\widetilde{\bx})-\frac{\mu}{2}\|\widetilde{\bx}-\bz_t\|^2]-\langle \by_t,A\widetilde{\bx}-\bb\rangle-\frac{\rho}{2}\|A\widetilde{\bx}-\bb\|^2\\
                &= [ f(\bar{\bx}^*(\bz_t))+\frac{\mu}{2}\|\bar{\bx}^*(\bz_t)-\bz_t\|^2-f(\widetilde{\bx})-\frac{\mu}{2}\|\widetilde{\bx}-\bz_t\|^2-\frac{\rho}{2}\|A\widetilde{\bx}-\bb\|^2]-\langle \by_t,A\widetilde{\bx}-\bb\rangle\\
                &\geq -2M-\langle \by_t,A\widetilde{\bx}-\bb\rangle.
            \end{align*}
        
            Where the first inequality comes from the definition of $\Psi(\bz_t)$ and 
            $$d(\by_t,\bz_t)=\min_{\bx\in X}K(\bx,\by,\bz)$$
        
            And in the last inequality, we let
            $$M=\max_{\bx,\bz\in X}\{|f(\bx)|+\frac{\mu}{2}\|\bx-\bz\|^2+\frac{\rho}{2}\|A\bx-\bb\|^2\} $$
        
            So we have the last inequality.
        
            According to Assumption \ref{assumptions for stochasitc}(2), there exists a positive $r>0$ such that for any direction $\mathbf{d}\in \text{Range}(A)$, we can find a $\bx\in X$ satisfying $\|A\bx-\bb\|=r$ and $A\bx-\bb$ has the same direction as $\mathbf{d}$. Because $\by_t\in \text{Range}(A)$ (by assumption \ref{assumptions for stochasitc}(3), $\text{Range}(A)=\mathbb{R}^m$) we can choose $\widetilde{\bx}$ such that $A\widetilde{\bx}-\bb$ is of the same direction as $-\by_t$ and $\|A\widetilde{\bx}-\bb\|=r$. Then we obtain
        $$\Psi(\bz_t)-d(\by_t,\bz_t) \geq -2M+r\|\by_t\| \Longrightarrow \|\by_t\|\leq \frac{\Psi(\bz_t)-d(\by_t,\bz_t)+2M}{r},\forall t\in \{0,1,...,T\}.$$
This concludes the proof.
            \end{proof}
        
        \end{proposition}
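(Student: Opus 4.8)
The plan is to bound $\|\by_t\|$ by playing the two value functions $\Psi(\bz_t)$ and $d(\by_t,\bz_t)$ against each other at a single well-chosen test point. Since $d(\by_t,\bz_t)=\min_{\bx\in X}K(\bx,\by_t,\bz_t)$ is a minimum, for \emph{any} $\widetilde{\bx}\in X$ we have $d(\by_t,\bz_t)\le K(\widetilde{\bx},\by_t,\bz_t)$, while $\Psi(\bz_t)$ equals $f(\bar{\bx}^*(\bz_t))+\frac{\mu}{2}\|\bar{\bx}^*(\bz_t)-\bz_t\|^2$ at its own minimizer. Subtracting gives the one-sided estimate
\[
\Psi(\bz_t)-d(\by_t,\bz_t)\ge f(\bar{\bx}^*(\bz_t))+\tfrac{\mu}{2}\|\bar{\bx}^*(\bz_t)-\bz_t\|^2-K(\widetilde{\bx},\by_t,\bz_t),
\]
valid for every $\widetilde{\bx}\in X$, which is where all the freedom in the argument sits.

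Next I would expand $K(\widetilde{\bx},\by_t,\bz_t)=f(\widetilde{\bx})+\langle\by_t,A\widetilde{\bx}-\bb\rangle+\frac{\rho}{2}\|A\widetilde{\bx}-\bb\|^2+\frac{\mu}{2}\|\widetilde{\bx}-\bz_t\|^2$ and split the right-hand side into the single term $-\langle\by_t,A\widetilde{\bx}-\bb\rangle$, which is linear in $\by_t$, and a remainder built entirely from objective-type quantities evaluated at points of $X$. With $M=\max_{\bx,\bz\in X}\{|f(\bx)|+\frac{\mu}{2}\|\bx-\bz\|^2+\frac{\rho}{2}\|A\bx-\bb\|^2\}$, which is finite because $X$ is compact under \Cref{assumptions for stochasitc}, each such block is bounded in absolute value by $M$, so the remainder is at least $-2M$. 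This yields $\Psi(\bz_t)-d(\by_t,\bz_t)\ge -2M-\langle\by_t,A\widetilde{\bx}-\bb\rangle$, again for every admissible $\widetilde{\bx}$.

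The crux is then to choose $\widetilde{\bx}$ so that the linear term becomes $+r\|\by_t\|$. Here I would invoke the constraint qualification: since $A$ has full row rank, $\mathrm{Range}(A)=\mathbb{R}^m$ and in particular $\by_t\in\mathrm{Range}(A)$; and since the origin lies in the relative interior of $\{A\bx-\bb:\bx\in X\}$, there is a fixed radius $r>0$ such that every direction in $\mathrm{Range}(A)$ is realized at distance $r$, i.e.\ one can pick $\widetilde{\bx}\in X$ with $A\widetilde{\bx}-\bb$ pointing along $-\by_t$ and $\|A\widetilde{\bx}-\bb\|=r$. For this choice $-\langle\by_t,A\widetilde{\bx}-\bb\rangle=r\|\by_t\|$, and substituting gives $\Psi(\bz_t)-d(\by_t,\bz_t)\ge -2M+r\|\by_t\|$, which rearranges to the claimed bound $\|\by_t\|\le\frac{\Psi(\bz_t)-d(\by_t,\bz_t)+2M}{r}$.

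I expect the geometric constraint-qualification step to be the only delicate part: one must make precise that the relative-interior hypothesis delivers a \emph{uniform} radius $r$ (independent of $t$ and of the direction $-\by_t/\|\by_t\|$) at which an attainable constraint residual exists, and that $\by_t$ indeed lies in the span where these residuals live. The remaining steps are purely definitional, relying only on $d$ being a pointwise minimum and on compactness of $X$ to make $M$ finite; no algorithm-specific dynamics enter, so the bound holds at every iterate regardless of how $\by_t$ was produced.
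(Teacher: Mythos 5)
Your proposal is correct and follows essentially the same route as the paper's proof: bounding $d(\by_t,\bz_t)$ above by $K(\widetilde{\bx},\by_t,\bz_t)$ at an arbitrary test point, isolating the linear term $-\langle\by_t,A\widetilde{\bx}-\bb\rangle$, absorbing the rest into $-2M$, and then choosing $\widetilde{\bx}$ via the relative-interior condition so that $A\widetilde{\bx}-\bb$ aligns with $-\by_t$ at norm $r$. The only delicate point you flag (uniformity of $r$ and $\by_t\in\mathrm{Range}(A)$) is handled in the paper exactly as you describe, using full row rank of $A$.
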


       \textbf{Then we start the proof for Theorem \ref{thm:stochasitc linear constraints}}

            \begin{proof}
                First, let $M_V=\max_{\bx,\bz\in X}\{K(\bx,0,\bz)-2d(0,\bz)+2\Psi(\bz)\}$ and $M_y>\frac{M_V-M_\Psi+2M}{r}$ where $M_\Psi$ is a uniform lower bound of $\Psi(\bz_t)$, for example, $\underline f$.
            
                If $\|\by_{t+1}\|\leq M_y$, then 
                \begin{equation} \label{y<My}
                    \begin{aligned}
                         \mathbb{E}V_t-\mathbb{E}V_{t+1}&\geq \widetilde{c}_\beta \mathbb{E}\|\bz_{t+1}-\bz_t\|^2+\widetilde{c}_\tau\mathbb{E}\|\bu^*(\bx_t, \by_{t+1}, \bz_{t})-\bx_t\|^2+\widetilde{c}_\eta \mathbb{E}\|A\bx^*(\by_{t+1}, \bz_t)-\bb\|^2\\
                         &\quad -\lambda \tau^2\sigma_2^2-(1+\frac{17\|A\|^2}{\gamma_K^2})\eta^2L^2
                    \end{aligned}
                \end{equation}
                according to the analysis of \Cref{lem: appendix_descent_stochasitc}.
                
                If $\|\by_{t+1}\|> M_y$. For distinction, if we perform the procedure $\by_{t+1}=0$, let us denote the update as $\by_{t+1}, \bx_{t+1}, \bz_{t+1}$ as $\hat{\by}_{t+1}, \hat{\bx}_{t+1}, \hat{\bz}_{t+1}$ and $\by_{t+1}, \bx_{t+1}, \bz_{t+1}$ denote the iteration generated without taking $\by_{t+1}=0$. Then
                \begin{align*}
                   K(\bx_{t+1},\by_{t+1},\bz_{t+1})-2d(\by_{t+1},\bz_{t+1})+2\Psi(\bz_{t+1})&\geq \Psi(\bz_{t+1}) -d(\by_{t+1}, \bz_{t+1}) + \Psi(\bz_{t+1})\\
                   &\geq r\|\by_{t+1}\|-2M+M_\Psi\\
                    &\geq rM_y-2M+M_\Psi\\
                    &\geq M_V\\
                    &=\max_{\bx,\bz\in X}\{K(\bx,0,\bz)-2d(0,\bz)+2\Psi(\bz)\}\\
                    &\geq K(\hat{\bx}_{t+1},0,\hat{\bz}_{t+1})-2d(0,\hat{\bz}_{t+1})+2\Psi(\hat{\bz}_{t+1}),
                \end{align*}
where the first step used $d(\by_{t+1}, \bz_{t+1}) \leq K(\bx_{t+1}, \by_{t+1}, \bz_{t+1})$ and the second line used $\Psi(\bz_{t+1}) \geq M_\Psi$.    
        
                Hence, $\mathbb{E}V_t-\mathbb{E}V_{t+1}$ becomes larger if we run $\by_{t+1}=0$. So \eqref{y<My} still holds, then the convergence result follows. 
                
                The rest of the proof for the complexity result proceeds the same as \Cref{appendix: complexity_moreau} up to simple changes in the constants, and hence is omitted.                
                \end{proof}
                
\section{Proof for Section \ref{sec: var_red}}\label{appendix: storm_proof}
\begin{lemma}\cite[Lemma 3.10]{zhang2020proximal}\label{Lem of K}
    Under \Cref{asmp: 1}, we have
    \begin{align*}
        \left\| \bx - \proj_X(\bx - \tau \nabla K(\bx,\by,\bz)) \right\| \geq \tau(\mu-L_f)\left\| \bx - \bx^*(\by, \bz) \right\|,
    \end{align*}
where $ K(\bx, \by, \bz) = L_\rho(\bx, \by) + \frac{\mu}{2}\|\bx-\bz\|^2$, and $\bx^*(\by,\bz)=\min_{\bx \in X}K(\bx,\by,\bz)$. 
\end{lemma}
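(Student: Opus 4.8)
The plan is to exhibit the projected--gradient map as a contraction whose fixed point is $\bx^*(\by,\bz)$, and then read off the lower bound from a reverse triangle inequality. Throughout, abbreviate $m=\mu-L_f=\gamma_K$ and write $T(\bu)=\proj_X(\bu-\tau\nabla_\bx K(\bu,\by,\bz))$, so that the quantity to be bounded is $\|\bx-T(\bx)\|$. The first step is to observe that $\bx^*(\by,\bz)$ is a fixed point of $T$: since $\bx^*(\by,\bz)$ minimizes $K(\cdot,\by,\bz)$ over $X$, its optimality condition is $-\nabla_\bx K(\bx^*,\by,\bz)\in\partial I_X(\bx^*)$, i.e. it lies in the normal cone, and because the normal cone is a cone this gives $\bx^*=\proj_X(\bx^*-\tau\nabla_\bx K(\bx^*,\by,\bz))=T(\bx^*)$ for every $\tau>0$. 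Note also that the step-size restriction $\tau\le 1/L_K$ (implicit in the statement, and satisfied by the parameters in \eqref{eq: alg_params_storm} used by Algorithm \ref{alg: alm-storm}) will be needed.

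The crux is to show that the \emph{unconstrained} gradient step $S(\bu)=\bu-\tau\nabla_\bx K(\bu,\by,\bz)$ is $(1-\tau m)$-Lipschitz, rather than settling for the cruder estimate with factor $\sqrt{1-2\tau m+\tau^2 L_K^2}$, which is too lossy to recover the sharp constant $\tau m$. For this I would use strong convexity and smoothness of $K$ simultaneously, via Baillon--Haddad co-coercivity applied to the shifted function $\bu\mapsto K(\bu,\by,\bz)-\tfrac m2\|\bu\|^2$, which is convex and $(L_K-m)$-smooth (recall from \Cref{fact: params} that $K(\cdot,\by,\bz)$ is $m$-strongly convex and $L_K$-smooth). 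Writing $\bd=\bx-\bx^*(\by,\bz)$ and $\mathbf{g}=\nabla_\bx K(\bx,\by,\bz)-\nabla_\bx K(\bx^*,\by,\bz)$, co-coercivity of the shifted gradient gives $\langle \mathbf{g}-m\bd,\bd\rangle\ge\frac{1}{L_K-m}\|\mathbf{g}-m\bd\|^2$, which rearranges to $\|\mathbf{g}\|^2\le (L_K+m)\langle\mathbf{g},\bd\rangle-L_K m\|\bd\|^2$. Substituting this into the expansion $\|S(\bx)-S(\bx^*)\|^2=\|\bd\|^2-2\tau\langle\mathbf{g},\bd\rangle+\tau^2\|\mathbf{g}\|^2$ and using $\langle\mathbf{g},\bd\rangle\ge m\|\bd\|^2$ on the (now nonpositive) coefficient $\tau(\tau(L_K+m)-2)$ collapses the bound exactly to $(1-\tau m)^2\|\bd\|^2$, provided $\tau\le \tfrac{2}{L_K+m}$; in particular $\tau\le 1/L_K$ is admissible.

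With the contraction in hand the remainder is immediate. Since $\proj_X$ is nonexpansive, $T=\proj_X\circ S$ is also $(1-\tau m)$-Lipschitz, so using the fixed-point identity $T(\bx^*)=\bx^*$ we get $\|T(\bx)-\bx^*\|=\|T(\bx)-T(\bx^*)\|\le(1-\tau m)\|\bx-\bx^*\|$. A reverse triangle inequality then yields $\|\bx-T(\bx)\|\ge\|\bx-\bx^*\|-\|T(\bx)-\bx^*\|\ge\tau m\|\bx-\bx^*\|$, which is precisely the claimed inequality with $m=\mu-L_f$.

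The main obstacle is exactly the sharp Lipschitz constant established in the second paragraph. The naive route through strong monotonicity together with $\|\mathbf{g}\|\le L_K\|\bd\|$ only produces a contraction factor $\sqrt{1-2\tau m+\tau^2L_K^2}$, and one checks that $1-\sqrt{1-2\tau m+\tau^2L_K^2}<\tau m$ whenever $m<L_K$, so it cannot reproduce the stated bound; indeed a one-dimensional active-constraint example shows the inequality genuinely fails once $\tau$ exceeds roughly $1/L_K$. Extracting the tight factor $1-\tau m$ is what forces both the co-coercivity argument on the shifted convex function and the step-size condition $\tau\le 2/(L_K+m)$, and it is this tightness (equality holds for quadratics aligned with the weakest curvature direction) that makes the bound sharp.
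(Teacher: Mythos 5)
The paper does not actually prove this lemma: it is imported by citation from \cite[Lemma 3.10]{zhang2020proximal}, so there is no in-paper argument to compare yours against. Judged on its own, your proof is correct and complete. The fixed-point identity $\bx^*(\by,\bz)=\proj_X\bigl(\bx^*(\by,\bz)-\tau\nabla_\bx K(\bx^*(\by,\bz),\by,\bz)\bigr)$ follows exactly as you say from the normal-cone optimality condition, the interpolation inequality $\|\mathbf{g}\|^2\le (L_K+m)\langle\mathbf{g},\bd\rangle-L_Km\|\bd\|^2$ obtained from co-coercivity of the shifted convex function is the standard sharp bound, and substituting it into $\|\bd-\tau\mathbf{g}\|^2$ does collapse to $(1-\tau m)^2\|\bd\|^2$ once $\tau\le 2/(L_K+m)$; nonexpansiveness of $\proj_X$ and the reverse triangle inequality then give the claim. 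You are also right that the naive factor $\sqrt{1-2\tau m+\tau^2L_K^2}$ would not suffice, so the co-coercivity step is genuinely needed. This contraction-based route differs from the usual proof of such error bounds (and from the one in the cited source), which adds the two variational inequalities characterizing $\proj_X(\bx-\tau\nabla_\bx K(\bx,\by,\bz))$ and $\bx^*(\by,\bz)$ and invokes strong monotonicity of $\nabla_\bx K$; your version is arguably cleaner and makes the sharpness of the constant transparent.

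One substantive point you correctly flag and that deserves emphasis: as stated, the lemma carries no restriction on $\tau$, yet the inequality is false for large $\tau$. For instance, in one dimension with $X=[0,\infty)$ and $K(x)=\tfrac{L}{2}(x-a)^2$, $a<0$, the left-hand side saturates at $|x|$ once the gradient step leaves $X$, while the right-hand side grows linearly in $\tau$. So a hypothesis of the form $\tau\le 2/(L_K+\mu-L_f)$ (implied by $\tau\le 1/L_K$) is genuinely required and is an omission in the statement; it is harmless in context because the parameter choice $\tau\le\frac{1}{4L_K+8\mu}$ in \eqref{eq: alg_params_storm}, under which \Cref{Lem of K} is invoked in \Cref{lem: x-x(y,z)}, comfortably satisfies it, but no proof can dispense with it.
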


\begin{lemma} \label{lem: x-x(y,z)}
    Under \Cref{asmp: 1}, for the iterates generated by \Cref{alg: alm-storm}  we have 
$$ \|\bx_t-\bx^*(\by_{t+1},\bz_t)\|\leq  \frac{1}{\tau(\mu-L_f)}\|\bx_t-\bx_{t+1}\|+\frac{1}{(\mu-L_f)}\|\widehat{\nabla}f_t-\nabla f(\bx_t)\|$$
\end{lemma}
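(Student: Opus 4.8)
The plan is to combine the gradient-mapping lower bound of \Cref{Lem of K} with the non-expansiveness of the projection, exploiting the fact that the search direction $G$ used in \Cref{alg: alm-storm} differs from the exact gradient $\nabla_\bx K$ only through the STORM estimator error $\widehat{\nabla}f_t - \nabla f(\bx_t)$.

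First I would apply \Cref{Lem of K} at the point $\bx_t$ with dual variable $\by_{t+1}$ and center $\bz_t$, which gives
\begin{equation*}
\tau(\mu-L_f)\|\bx_t - \bx^*(\by_{t+1},\bz_t)\| \leq \left\| \bx_t - \proj_X\big(\bx_t - \tau \nabla_\bx K(\bx_t,\by_{t+1},\bz_t)\big) \right\|.
\end{equation*}
The next step is to relate the exact gradient step appearing on the right to the actual iterate $\bx_{t+1} = \proj_X(\bx_t - \tau G(\bx_t,\by_{t+1},\bz_t))$. Inserting $\bx_{t+1}$ by the triangle inequality yields
\begin{equation*}
\left\| \bx_t - \proj_X\big(\bx_t - \tau \nabla_\bx K(\bx_t,\by_{t+1},\bz_t)\big) \right\| \leq \|\bx_t - \bx_{t+1}\| + \left\| \proj_X\big(\bx_t - \tau G\big) - \proj_X\big(\bx_t - \tau \nabla_\bx K\big) \right\|,
\end{equation*}
where the arguments of $G$ and $\nabla_\bx K$ are all $(\bx_t,\by_{t+1},\bz_t)$.

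The key observation is that by the definition of $G$ in \Cref{alg: alm-storm} and the definition of $\nabla_\bx K$, the two directions coincide except that $G$ uses the variance-reduced estimate $\widehat{\nabla}f_t$ in place of $\nabla f(\bx_t)$; hence $G(\bx_t,\by_{t+1},\bz_t) - \nabla_\bx K(\bx_t,\by_{t+1},\bz_t) = \widehat{\nabla}f_t - \nabla f(\bx_t)$. Applying the non-expansiveness of $\proj_X$ to the last term then bounds it by $\tau\|\widehat{\nabla}f_t - \nabla f(\bx_t)\|$, so that
\begin{equation*}
\tau(\mu-L_f)\|\bx_t - \bx^*(\by_{t+1},\bz_t)\| \leq \|\bx_t - \bx_{t+1}\| + \tau\|\widehat{\nabla}f_t - \nabla f(\bx_t)\|.
\end{equation*}
Dividing through by $\tau(\mu-L_f)$ gives the claim.

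This argument is essentially bookkeeping once \Cref{Lem of K} is in hand, so I do not expect a genuine obstacle; the only point requiring care is confirming the identity $G - \nabla_\bx K = \widehat{\nabla}f_t - \nabla f(\bx_t)$, i.e.\ that the penalty and proximal coefficients in $G$ are matched to those defining $K$, which follows directly from the respective definitions.
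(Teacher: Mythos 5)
Your proposal is correct and follows exactly the paper's own argument: apply \Cref{Lem of K} at $(\bx_t,\by_{t+1},\bz_t)$, insert $\bx_{t+1}=\proj_X(\bx_t-\tau G)$ via the triangle inequality, use non-expansiveness of $\proj_X$ together with the identity $\nabla_\bx K - G = \nabla f(\bx_t)-\widehat{\nabla}f_t$, and divide by $\tau(\mu-L_f)$. No differences worth noting.
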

\begin{proof}
    Taking $\bx, \by, \bz$ as $\bx_t, \by_{t+1},\bz_t$ in Lemma \ref{Lem of K}, we have
    \begin{align*}
        \|\bx_t-\bx^*(\by_{t+1},\bz_t)\|&\leq \frac{1}{\tau(\mu-L_f)}\|\bx_t-\proj_X(\bx_t - \tau \nabla K(\bx, \by_{t+1},\bz_t) )\|\\
        & \leq \frac{1}{\tau(\mu-L_f)}\|\bx_t-\proj_X(\bx -\tau G(\bx_t,\by_{t+1},\bz_t) )\|\\
        &\quad +\frac{1}{\tau(\mu-L_f)}\|\proj_X(\bx_t - \tau \nabla K(\bx_t, \by_{t+1},\bz_t) )-\proj_X(\bx_t-\tau G(\bx_t,\by_{t+1},\bz_t))\|\\
        &\leq  \frac{1}{\tau(\mu-L_f)}\|\bx_t-\bx_{t+1}\|+\frac{1}{(\mu-L_f)}\|\widehat{\nabla}f_t-\nabla f(\bx_t)\|,
    \end{align*}
    where the second inequality comes form triangle inequality and the last inequality comes from the fact that $\proj_X$ is nonexpansive and $\nabla K(\bx_t,\by_{t+1},\bz_t)-G(\bx_t,\by_{t+1},\bz_t)=\widehat{\nabla}f_t-\nabla f(\bx_t)$
\end{proof}

\begin{proof}[Proof of \Cref{lemma: variance_reduction}]
    By the definition of $\widehat{\nabla} f_{t+1}$, we have
    \begin{align}
        &\widehat{\nabla}f_{t+1}-\nabla f(\bx_{t+1}) \notag\\
        &=\nabla f(\bx_{t+1},\xi_{t+1})+(1-\alpha)( \widehat{\nabla}f_{t}-\nabla f(\bx_t,\xi_{t+1}))-\nabla f(\bx_{t+1})\notag \\
                                        &=\nabla f(\bx_{t+1},\xi_{t+1})+(1-\alpha)(\widehat{\nabla}f_{t}-\nabla f(\bx_{t}))+(1-\alpha)(\nabla f(\bx_t)-\nabla f(\bx_t,\xi_{t+1}))-\nabla f(\bx_{t+1})\notag \\
                                        &=(1-\alpha)(\widehat{\nabla}f_{t}-\nabla f(\bx_t))+(1-\alpha)(\nabla f(\bx_t)-f(\bx_t,\xi_{t+1}))+f(\bx_{t+1},\xi_{t+1})-\nabla f(\bx_{t+1}),\label{eq: sax4}
    \end{align}
where in the second equality, we added and subtracted $(1-\alpha)\nabla f(\bx_t)$.

Then, we compute the squared norm of \eqref{eq: sax4} and expand to get
\begin{align*}
    &\|\widehat{\nabla}f_{t+1}-\nabla f(\bx_{t+1})\|^2\\
    &=(1-\alpha)^2\|\widehat{\nabla}f_{t}-\nabla f(\bx_t)\|^2+\|(1-\alpha)(\nabla f(\bx_t)-f(\bx_t,\xi_{t+1}))+f(\bx_{t+1},\xi_{t+1})-\nabla f(\bx_{t+1})\|^2\\
    &\quad+2(1-\alpha)\langle \widehat{\nabla}f_{t}-\nabla f(\bx_t),(1-\alpha)(\nabla f(\bx_t)-f(\bx_t,\xi_{t+1}))+f(\bx_{t+1},\xi_{t+1})-\nabla f(\bx_{t+1})\rangle.
\end{align*}
Next, we take expectation with respect to $\xi_{t+1}$ to obtain
\begin{align}
    &\mathbb{E}_{\xi_{t+1}}\|\widehat{\nabla}f_{t+1}-\nabla f(\bx_{t+1})\|^2\notag \\
    &=(1-\alpha)^2\mathbb{E}_{\xi_{t+1}}\|\widehat{\nabla}f_{t}-\nabla f(\bx_t)\|^2+\mathbb{E}_{\xi_{t+1}}\|(1-\alpha)(\nabla f(\bx_t)-f(\bx_t,\xi_{t+1}))+f(\bx_{t+1},\xi_{t+1})-\nabla f(\bx_{t+1})\|^2,\label{eq: sxz4}
\end{align}
which is due to $\widehat{\nabla}f_{t}-\nabla f(\bx_t)$ being independent of $\xi_{t+1}$, and 
$$\mathbb{E}_{\xi_{t+1}}[\nabla f(\bx_t)-f(\bx_t,\xi_{t+1})]=0, \mathbb{E}_{\xi_{t+1}}[f(\bx_{t+1},\xi_{t+1})-\nabla f(\bx_{t+1})]=0.$$
Finally, we estimate the last term in the right-hand side of \eqref{eq: sxz4}:
\begin{align*}
    &\mathbb{E}_{\xi_{t+1}}\|(1-\alpha)(\nabla f(\bx_t)-f(\bx_t,\xi_{t+1}))+f(\bx_{t+1},\xi_{t+1})-\nabla f(\bx_{t+1})\|^2\\
    &=\mathbb{E}_{\xi_{t+1}}\|f(\bx_{t+1},\xi_{t+1})-f(\bx_{t},\xi_{t+1})+\nabla f(\bx_t)-\nabla f(\bx_{t+1})+\alpha(f(\bx_t,\xi_{t+1})-\nabla f(\bx_t))\|^2\\
    &\leq 3\mathbb{E}_{\xi_{t+1}}\|f(\bx_{t+1},\xi_{t+1})-f(\bx_{t},\xi_{t+1})\|^2+3\mathbb{E}_{\xi_{t+1}}\|\nabla f(\bx_t)-\nabla f(\bx_{t+1})\|^2+3\mathbb{E}_{\xi_{t+1}}\|\alpha(f(\bx_t,\xi_{t+1})-\nabla f(\bx_t))\|^2\\
    &\leq 3L_0^2\|\bx_{t+1}-\bx_t\|^2+3L_f^2\|\bx_t-\bx_{t+1}\|^2+3\alpha^2\sigma^2,
\end{align*}
where in the first equality, we rearrange the terms, and in the first inequality, we use Young's inequality. Then in the second inequality, we use the Assumption \ref{assumption: variance_reduction}, $L_f$-smoothness of $f(\bx)$ and $\mathbb{E}_{\xi}\|\nabla f(\bx,\xi)-\nabla f(\bx)\|^2 \leq \sigma^2$. We use this estimation in \eqref{eq: sxz4} and take total expectation to get the result.
\end{proof}

\begin{proof}[Proof of \Cref{lemma: descent_alm_storm}] \label{proof of lemma: descent_alm_storm}
    We have, by smoothness of \( K \):
\[
K(\bx_{t+1}, \by_{t+1}, \bz_t) \leq K(\bx_t, \by_{t+1}, \bz_t) + \langle \nabla_{\bx} K(\bx_t, \by_{t+1}, \bz_t), \bx_{t+1} - \bx_t \rangle + \frac{L_K}{2} \| \bx_{t+1} - \bx_t \|^2.
\]
We estimate the inner product here as
\[
\langle \nabla_{\bx} K(\bx_t, \by_{t+1}, \bz_t), \bx_{t+1} - \bx_t \rangle = \langle G(\bx_t, \by_{t+1}, \bz_t), \bx_{t+1} - \bx_k \rangle + \langle \nabla_{\bx} K(\bx_t, \by_{t+1}, \bz_t) - G(\bx_t, \by_{t+1}, \bz_t), \bx_{t+1} - \bx_t \rangle.
\]
We first have
\[
\nabla_{\bx} K(\bx_t, \by_{t+1}, \bz_t) - G(\bx_t, \by_{t+1}, \bz_t) = \nabla f(\bx_t) - \widehat{\nabla}f_t ,
\]
The definition of \( \bx_{t+1} \) gives
\[
\langle \bx_{t+1} - \bx_t + \tau G(\bx_t, \by_{t+1}, \bz_t), \bx_t - \bx_{t+1} \rangle \geq 0 \quad \Longleftrightarrow \quad \langle G(\bx_t, \by_{t+1}, \bz_t), \bx_t - \bx_{t+1} \rangle \geq \frac{1}{\tau} \| \bx_{t+1} - \bx_t \|^2,
\]
Using $ \langle \nabla_{\bx} K(\bx_t, \by_{t+1}, \bz_t) - G(\bx_t, \by_{t+1}, \bz_t), \bx_{t+1} - \bx_t \rangle \leq \frac{\tau}{2}\| \nabla f(\bx_t) - \widehat{\nabla}f_t \|+\frac{1}{2\tau}\|\bx_{t+1} - \bx_t\|$, we have
\[
\langle \nabla_{\bx} K(\bx_t, \by_{t+1}, \bz_t), \bx_{t+1} - \bx_t \rangle \leq \frac{\tau}{2} \| \nabla f(\bx_t) - \widehat{\nabla}f_t  \|^2 - \frac{1}{2\tau} \| \bx_{t+1} - \bx_t \|^2.
\]
Then the result follows.
\end{proof}

\begin{proof}[Proof of \Cref{lemma: descent_alm_storm2}]
    First, from the definition we have
    $$K(\bx_t,\by_t,\bz_t)-K(\bx_t,\by_{t+1},\bz_t)=-\eta \|A\bx_t-\bb\|^2$$
    and also
    \begin{align*}
        &K(\bx_{t+1},\by_{t+1},\bz_t)-K(\bx_{t+1},\by_{t+1},\bz_{t+1})\notag \\
        &=\frac{\mu}{2}(\|\bx_{t+1}-\bz_t\|^2-\|\bx_{t+1}-\bz_{t+1}\|^2)\\
        &=\frac{\mu}{2}\langle \bz_{t+1}-\bz_t,2\bx_{t+1}-\bz_t-\bz_{t+1}\rangle\\
        &=\frac{\mu}{2}\langle \bz_{t+1}-\bz_t,2\bx_{t+1}-2\bx_t+2\bx_t-2\bz_t+\bz_t-\bz_{t+1}\rangle\\
        &=\frac{\mu}{2}\langle \bz_{t+1}-\bz_t,2\bx_{t+1}-2\bx_t\rangle+\frac{\mu}{2}\langle \bz_{t+1}-\bz_t,2\bx_{t}-2\bz_t\rangle-\frac{\mu}{2}\|\bz_{t+1}-\bz_t\|^2\\
        &\geq -\frac{\mu}{4}\|\bz_{t+1}-\bz_t\|^2-\mu\|\bx_{t+1}-\bx_t\|^2+\frac{\mu}{\beta}\|\bz_t-\bz_{t+1}\|^2-\frac{\mu}{2}\|\bz_{t+1}-\bz_t\|^2,
    \end{align*}
where the first equality comes from the definition of $K$. In the last inequality, we use $\langle a,b\rangle\geq -\frac{1}{4}\|a\|^2-\|b\|^2$ and $\bx_t-\bz_t=\frac{\bz_{t+1}-\bz_t}{\beta}$ by the definition of $\bz_{t+1}$ in Algorithm \ref{alg: alm-storm}.

Then combining the above two results with Lemma \ref{lemma: descent_alm_storm} yields the claim.
\end{proof}

\begin{proof}[Proof of \Cref{th: alm_storm_descent}]
    We denote that
    \begin{equation}
        V_t = K(\bx_t,\by_t,\bz_t) - 2d(\by_t, \bz_t) + 2\Psi(\bz_t).
    \end{equation}
    Joining \eqref{eq: descent_K} with Lemma \ref{lem: swo4}, we have
    \begin{align}
        \mathbb{E}V_t-\mathbb{E}V_{t+1}&\geq -\eta \mathbb{E}\|A\bx_t-\bb\|^2+(\frac{\mu}{\beta}-\frac{3\mu}{4})\mathbb{E}\|\bz_{t+1}-\bz_t\|^2 \notag \\
        &\quad-\frac{\tau}{2} \mathbb{E}\| \nabla f(\bx_t) - \widehat{\nabla}f_t  \|^2 +(\frac{1}{2\tau}-\frac{L_K}{2}-\mu) \mathbb{E}\| \bx_{t+1} - \bx_t \|^2 \notag \\
        &\quad +2\eta \mathbb{E}\langle A\bx_t-\bb, A\bx^*(\by_{t+1}, \bz_t)-\bb \rangle+\mu\mathbb{E}\langle \bz_{t+1}-\bz_{t}, \bz_{t+1}+\bz_{t}-2\bx^*(\by_{t+1},\bz_{t+1})\rangle \notag \\
        &\quad -2\mu\mathbb{E}\langle \bz_{t+1}-\bz_t, \bz_t-\bar{\bx}^*(\bz_t)\rangle-\frac{\mu}{\sigma_4}\mathbb{E}\|\bz_t-\bz_{t+1}\|^2.\label{eq: sow44}
    \end{align}
    First, let us combine the first and fifth terms on the right-hand side to obtain
    \begin{align}
        -\eta \mathbb{E}\|A\bx_t-\bb\|^2+2\eta\langle A\bx_t-\bb, A\bx^*(\by_{t+1}, \bz_t)-\bb \rangle&=-\eta\|A\bx_t-A\bx^*(\by_{t+1}, \bz_t)\|^2+\eta\|A\bx^*(\by_{t+1}, \bz_t)-\bb\|^2. \label{eq: axo2}
    \end{align}
    Next, we combine the sixth and seventh terms on the right-hand side of \eqref{eq: sow44}
    \begin{align}
        &\mu\langle \bz_{t+1}-\bz_{t}, \bz_{t+1}+\bz_{t}-2\bx^*(\by_{t+1},\bz_{t+1})\rangle-2\mu\langle \bz_{t+1}-\bz_t, \bz_t-\bar{\bx}^*(\bz_t)\rangle\notag \\
        &=\mu\langle \bz_{t+1}-\bz_{t},\bz_{t+1}-\bz_{t}-2\bx^*(\by_{t+1},\bz_{t+1})+2\bar{\bx}^*(\bz_t)\rangle \notag \\
        &=\mu \| \bz_{t+1} - \bz_t\|^2+ 2\langle \bz_{t+1}-\bz_{t},-\bx^*(\by_{t+1},\bz_{t+1})+\bar{\bx}^*(\bz_t)\rangle\label{eq: axq4}
    \end{align}
    We now single out the inner product in the last equality and estimate it by adding and subtracting $\bx^\ast(\by_{t+1}, \bz_t)$ in the second argument of the inner product:
    \begin{align}
        &2\langle \bz_{t+1}-\bz_{t},-\bx^*(\by_{t+1},\bz_{t+1})+\bar{\bx}^*(\bz_t)\rangle \notag \\
         &= 2\mu\langle \bz_{t+1}-\bz_{t},-\bx^*(\by_{t+1},\bz_{t+1})+\bx^*(\by_{t+1},\bz_{t})\rangle+2\mu\langle \bz_{t+1}-\bz_{t},-\bx^*(\by_{t+1},\bz_{t})+\bar{\bx}^*(\bz_t)\rangle\notag \\
        &\geq -\mu \|\bz_{t+1}-\bz_t\|^2-\mu\|\bx^*(\by_{t+1},\bz_{t})-\bx^*(\by_{t+1},\bz_{t+1})\|^2-\frac{\mu}{\zeta}\|\bz_{t+1}-\bz_t\|^2-\mu \zeta\|\bar{\bx}^*(\bz_t)-\bx^*(\by_{t+1},\bz_{t})\|, \label{eq: aft4}
    \end{align}
    for any $\zeta$, where we used Young's inequality twice. 
    Then, we plug this into \eqref{eq: axq4} to obtain
    \begin{align}
    &\mu\langle \bz_{t+1}-\bz_{t}, \bz_{t+1}+\bz_{t}-2\bx^*(\by_{t+1},\bz_{t+1})\rangle-2\mu\langle \bz_{t+1}-\bz_t, \bz_t-\bar{\bx}^*(\bz_t)\rangle\notag \\
        &\geq -\frac{\mu}{\sigma_4^2}\|\bz_{t+1}-\bz_t\|^2-\frac{\mu}{\zeta}\|\bz_{t+1}-\bz_t\|^2-\mu \zeta\|\bar{\bx}^*(\bz_t)-\bx^*(\by_{t+1},\bz_{t})\|^2,\label{eq: dty3}
    \end{align}
    where
    we use \eqref{x(y,z')-x(y,z)} to bound the second term on the right-hand side of \eqref{eq: aft4}, where $\sigma_4$ is as defined in \eqref{eq: param_defs_appendix}.

    Then we use \eqref{eq: axo2} and \eqref{eq: dty3} in \eqref{eq: sow44} to obtain
    \begin{align*}
        \mathbb{E}V_t-\mathbb{E}V_{t+1}&\geq (\frac{\mu}{\beta}-\frac{3\mu}{4})\mathbb{E}\|\bz_{t+1}-\bz_t\|^2-\frac{\tau}{2} \mathbb{E}\| \nabla f(\bx_t) - \widehat{\nabla}f_t  \|^2 +(\frac{1}{2\tau}-\frac{L_K}{2}-\mu) \mathbb{E}\| \bx_{t+1} - \bx_t \|^2\\
        &\quad-\eta\mathbb{E}\|A\bx_t-A\bx^*(\by_{t+1}, \bz_t)\|^2+\eta\mathbb{E}\|A\bx^*(\by_{t+1}, \bz_t)-\bb\|^2\\
        &\quad-\frac{\mu}{\sigma_4^2}\mathbb{E}\|\bz_{t+1}-\bz_t\|^2-\frac{\mu}{\zeta}\mathbb{E}\|\bz_{t+1}-\bz_t\|^2-\mu \zeta\|\bar{\bx}^*(\bz_t)-\bx^*(\by_{t+1},\bz_{t})\|^2-\frac{\mu}{\sigma_4}\mathbb{E}\|\bz_{t+1}-\bz_t\|^2\\
        &\geq (\frac{\mu}{\beta}-\frac{3\mu}{4}-\frac{\mu}{\sigma_4^2}-\frac{\mu}{\zeta}-\frac{\mu}{\sigma_4})\mathbb{E}\|\bz_{t+1}-\bz_t\|^2\\
        &\quad-\frac{\tau}{2} \mathbb{E}\| \nabla f(\bx_t) - \widehat{\nabla}f_t  \|^2-\frac{2\eta \|A\|^2}{(\mu-L_f)^2}\mathbb{E}\| \nabla f(\bx_t) - \widehat{\nabla}f_t  \|^2 \\
         &\quad+(\frac{1}{2\tau}-\frac{L_K}{2}-\mu-\eta\|A\|^2\frac{2}{\tau^2(\mu-L_f)^2}) \mathbb{E}\| \bx_{t+1} - \bx_t \|^2\\
        &\quad+\eta\mathbb{E}\|A\bx^*(\by_{t+1},\bz_t)-\bb\|^2-\mu \zeta \bar{\sigma}^2\mathbb{E}\|A\bx^*(\by_{t+1},\bz_t)-\bb\|
    \end{align*}
    
    Where in the last inequality, we use the \ref{lem: x-x(y,z)} and \ref{global error}, then combine the same terms together.
    
    Then we need to estimate the coefficients of each terms in the above inequality.
    From the choosen parameters, we easily have $\sigma_4=\frac{\mu-L_f}{\mu}>\frac{1}{2}$ and let $\zeta=6\beta$

We now estimate the coefficient of $\mathbb{E}\|\bz_t-\bz_{t+1}\|^2$ in the last inequality. First, by $\sigma_4>\frac{1}{2}$, we have $\frac{\mu}{\sigma_4^2}\leq 4\mu$ and $\frac{\mu}{\sigma_4}\leq 2\mu$. By also using $\zeta=6\beta$, we have
$$\text{The coefficient of } \mathbb{E}\|\bz_t-\bz_{t+1}\|^2\geq \frac{\mu}{\beta}-\frac{3\mu}{4}-4\mu-\frac{\mu}{6\beta}-2\mu$$

Second, using $\beta \leq 1/50$, we obtain $(\frac{3}{4}+4+2)\mu\leq \frac{\mu}{5\beta}$, then
$$\text{The coefficient of } \mathbb{E}\|\bz_t-\bz_{t+1}\|^2\geq \frac{\mu}{\beta}-\frac{\mu}{5\beta}-\frac{\mu}{6\beta}\geq \frac{\mu}{\beta}.$$
We move on to estimating the coefficient of $\mathbb{E}\|\bx_t-\bx_{t+1}\|^2$.
With $\eta \leq \frac{(\mu-L_f)^2\tau}{8\|A\|^2}$, we have $2\eta\|A\|^2\frac{1}{\tau^2(\mu-L_f)^2}\leq \frac{1}{4\tau}$, we have
$$\text{The coefficient of } \mathbb{E}\|\bx_t-\bx_{t+1}\|^2\geq \frac{1}{4\tau}-\frac{L_K}{2}-\mu$$
Last, we work on the coefficient of $\mathbb{E}\|A\bx^*(\by_{t+1},\bz_t)-\bb\|^2$. Because $\zeta= 6\beta$, it follows that $\eta-\mu \zeta \bar{\sigma}^2=\eta -6\mu\beta\bar{\sigma}^2$.

With $\beta\leq \frac{\eta}{36\mu\bar{\sigma}^2}$, we have $6\mu\beta\bar{\sigma}^2\leq \frac{\eta}{6}$, then
$$\text{The coefficient of } \mathbb{E}\|A\bx^*(\by_{t+1},\bz_t)-\bb\|^2\geq \eta-\frac{\eta}{6}\geq \frac{\eta}{2}.$$
Next, we estimate the coefficient of $\mathbb{E}\| \nabla f(\bx_t) - \widehat{\nabla}f_t  \|^2$. With $\eta \leq \frac{(\mu-L_f)^2\tau}{8\|A\|^2}$, we have $-\frac{\tau}{2}-\frac{2\eta \|A\|^2}{(\mu-L_f)^2}\geq -\frac{3}{4}\tau$. Finally, we have
\begin{equation}\label{eq: alm_storm descent primal}
    \begin{aligned}
        &\mathbb{E}V_t-\mathbb{E}V_{t+1}\notag \\
        &\geq \frac{\mu}{2\beta}\mathbb{E}\|\bz_t-\bz_{t+1}\|^2+(\frac{1}{4\tau}-\frac{L_K}{2}-\mu)\mathbb{E}\|\bx_t-\bx_{t+1}\|^2+\frac{\eta}{2}\mathbb{E}\|A\bx^*(\by_{t+1},\bz_t)-\bb\|^2-\frac{3\tau}{4}\mathbb{E}\|\nabla f(\bx_t)-\widehat{\nabla}f_t \|^2\\
            &=\frac{\mu}{2\beta}\mathbb{E}\|\bz_t-\bz_{t+1}\|^2+(\frac{1}{4\tau}-\frac{L_K}{2}-\mu)\mathbb{E}\|\bx_t-\bx_{t+1}\|^2+\frac{\eta}{2}\mathbb{E}\|A\bx^*(\by_{t+1},\bz_t)-\bb\|^2\\
            &\quad +\frac{\tau}{4}\mathbb{E}\|\nabla f(\bx_t)-\widehat{\nabla}f_t \|^2-\tau\mathbb{E}\|\nabla f(\bx_t)-\widehat{\nabla}f_t \|^2
    \end{aligned}
\end{equation}

Then recalling Lemma \ref{lemma: variance_reduction} and assuming $0<\alpha\leq1$, we have
\begin{equation}\label{eq: variance_reduction2}
    \mathbb{E}\|\widehat{\nabla}f_{t+1} -\nabla f(\bx_{t+1})\|^2\leq (1-\alpha)\mathbb{E}\|\widehat{\nabla}f_t -\nabla f(\bx_t)\|^2+3(L_0^2+L_f^2)\mathbb{E}\|\bx_{t+1}-\bx_t\|^2+3\alpha^2\sigma^2
\end{equation}

We multiply \eqref{eq: variance_reduction2} by $\frac{\tau}{\alpha}$ and plug into \eqref{eq: alm_storm descent primal}, to get
\begin{align}
    \mathbb{E}V_t-\mathbb{E}V_{t+1}&\geq \frac{\mu}{2\beta}\mathbb{E}\|\bz_t-\bz_{t+1}\|^2+(\frac{1}{4\tau}-\frac{L_K}{2}-\mu)\mathbb{E}\|\bx_t-\bx_{t+1}\|^2\notag\\
    &\quad+\frac{\eta}{2}\mathbb{E}\|A\bx^*(\by_{t+1},\bz_t)-\bb\|^2 +\frac{\tau}{4}\mathbb{E}\|\nabla f(\bx_t)-\widehat{\nabla}f_t \|^2\notag \\
    &\quad+\frac{\tau}{\alpha}\mathbb{E}\|\widehat{\nabla}f_{t+1}-\nabla f(\bx_{t+1})\|^2-\frac{\tau}{\alpha}\mathbb{E}\|\widehat{\nabla}f_t -\nabla f(\bx_{t})\|^2\notag \\
    &\quad-\frac{3(L_0^2+L_f^2)\tau}{\alpha}\mathbb{E}\|\bx_t-\bx_{t+1}\|^2-3\alpha \sigma^2\tau.\label{eq: snv4}
\end{align}
Because $\alpha=48(L_0^2+L_f^2)\tau^2$ and $\tau \leq \min\{\frac{1}{4L_K+8\mu},\frac{1}{\sqrt{48(L_0^2+L_f^2)}}\}$, we  obtain
\begin{align*}
    \mathbb{E}V_t-\mathbb{E}V_{t+1}&\geq \frac{\mu}{2\beta}\mathbb{E}\|\bz_t-\bz_{t+1}\|^2+\frac{1}{8\tau}\mathbb{E}\|\bx_t-\bx_{t+1}\|^2+\frac{\eta}{2}\mathbb{E}\|A\bx^*(\by_{t+1},\bz_t)-\bb\|^2+\frac{\tau}{4}\mathbb{E}\|\nabla f(\bx_t)-\widehat{\nabla}f_t \|^2\\
    &+\frac{1}{48(L_0^2+L_f^2)\tau}\mathbb{E}\|\widehat{\nabla}f_{t+1}-\nabla f(\bx_{t+1})\|^2-\frac{1}{48(L_0^2+L_f^2)\tau}\mathbb{E}\|\widehat{\nabla}f_t -\nabla f(\bx_{t})\|^2-144(L_0^2+L_f^2)\sigma^2\tau^3.
\end{align*}
Finally, we move $\frac{1}{48(L_0^2+L_f^2)\tau}\mathbb{E}\|\widehat{\nabla}f_{t+1} -\nabla f(\bx_{t+1})\|^2-\frac{1}{48(L_0^2+L_f^2)\tau}\mathbb{E}\|\widehat{\nabla}f_t -\nabla f(\bx_{t})\|^2$ to the left-hand side of the above inequality and use the definition of $\bar V_t$ in \eqref{eq: barv_def} to get the desired result.
\end{proof}

\begin{proof}[Proof of \Cref{th: storm_complexity}]
    Because $\bz_{t+1}-\bz_t=\beta(\bx_t-\bz_t)$, $\frac{\mu \beta}{2}=\Theta(\tau)$ and $\frac{\eta}{2}=\Theta(\tau)$, hence there exists a constant $C$ such that
    \begin{align}
    \mathbb{E}\bar{V}_t-\mathbb{E}\bar{V}_{t+1}&\geq C\tau\{\mathbb{E}\|\bx_t-\bz_{t}\|^2+\mathbb{E}\|\tau^{-1}(\bx_t-\bx_{t+1})\|^2+\mathbb{E}\|A\bx^*(\by_{t+1},\bz_t)-\bb\|^2+\mathbb{E}\|\nabla f(\bx_t)-\widehat{\nabla}f_t \|^2\}\notag\\
    &\quad -144(L_0^2+L_f^2)\sigma^2\tau^3.\label{eq: alm_storm_descent}
\end{align}

Then, summing up \eqref{eq: alm_storm_descent} over $t=0,1,\ldots,T-1$, we have
\begin{align}
    \mathbb{E}\bar{V}_0-\mathbb{E}\bar{V}_{T}&\geq \sum_{t=0}^{T-1}C\tau\{\mathbb{E}\|\bx_t-\bz_{t}\|^2+\mathbb{E}\|\tau^{-1}(\bx_t-\bx_{t+1})\|^2+\mathbb{E}\|A\bx^*(\by_{t+1},\bz_t)-\bb\|^2+\mathbb{E}\|\nabla f(\bx_t)-\widehat{\nabla}f_t \|^2\}\notag\\
    &\quad -144(L_0^2+L_f^2)\sigma^2\tau^3T.\label{V_0-V_T}
\end{align}
Form the definition, we have $K(\bx,\by,\bz)\geq d(\by,\bz)$ and $\Psi(\bz)\geq d(\by,\bz)$ (see also Lemma \ref{lem: lb}), then
\begin{equation*}V_t=K(\bx_t,\by_t,\bz_t)-2d(\by_t, \bz_t) + 2\Psi(\bz_t)\geq \Psi(\bz_t)\geq \underline{f}.\end{equation*}
Then, we have 
\begin{equation}\label{V_T}
    \bar{V}_t=K(\bx_t,\by_t,\bz_t)-2d(\by_t, \bz_t) + 2\Psi(\bz_t)+\frac{1}{48(L_0^2+L_f^2)\tau}\mathbb{E}\|\widehat{\nabla}f_t -\nabla f(\bx_{t})\|^2\geq \underline{f}.
\end{equation}
Let $\tau=T^{-1/3}$ and use mini-batch in the initial step where we will have $\mathbb{E}\|\widehat{\nabla}f_0 -\nabla f(\bx_{0})\|^2\leq T^{-1/3}\sigma^2$ , then 
\begin{equation}\label{V_0}
\begin{aligned}
    \bar{V}_0&=K(\bx_0,\by_0,\bz_0)-2d(\by_0, \bz_0) + 2\Psi(\bz_0)+\frac{1}{48(L_0^2+L_f^2)\tau}\mathbb{E}\|\widehat{\nabla}f_0 -\nabla f(\bx_{0})\|^2\\
&\leq K(\bx_0,\by_0,\bz_0)-2d(\by_0, \bz_0) + 2\Psi(\bz_0)+\frac{\sigma^2}{48(L_0^2+L_f^2)}
\end{aligned}
\end{equation}
where the right-hand is a constant independent of $T$, we denote it as $C_0$.

Combining \eqref{V_0-V_T} with \eqref{V_T} and \eqref{V_0}, we have
\begin{equation} 
    \begin{aligned}
    &\frac{1}{T}\sum_{t=0}^{T-1}C\{\mathbb{E}\|\bx_t-\bz_{t}\|^2+\mathbb{E}\|\tau^{-1}(\bx_t-\bx_{t+1})\|^2+\mathbb{E}\|A\bx^*(\by_{t+1},\bz_t)-\bb\|^2+\mathbb{E}\|\nabla f(\bx_t)-\widehat{\nabla}f_t \|^2\}\\
    & \leq T^{-2/3}(C_0-\underline{f}+144(L_0^2+L_f^2)\sigma^2).
\end{aligned}
\end{equation}
Then, for index $s$ selected uniformly at random from $\{0,1,...,T-1\}$, we have
\begin{equation}\label{eq: aoe3}
\begin{aligned}
    &\mathbb{E}\|\bx_s-\bz_{s}\|^2=O(T^{-2/3}),\quad  \mathbb{E}\|\tau^{-1}(\bx_s-\bx_{s+1})\|^2=O(T^{-2/3}),\\
    &\mathbb{E}\|A\bx^*(\by_{s+1},\bz_s)-\bb\|^2=O(T^{-2/3}), \quad\mathbb{E}\|\nabla f(\bx_t)-\widehat{\nabla}f_t \|^2=O(T^{-2/3}).
\end{aligned}
\end{equation}
According to Algorithm \ref{alg: alm-storm}, we have
\[
\bx_{s+1} = \arg\min_{\bx} \left\{ \langle  G(\bx_s,\by_{s+1},\bz_s), \bx - \bx^s \rangle + \frac{1}{\tau} \|\bx - \bx_s\|^2 + \partial I_X(\bx) \right\}.
\]
By the definition of $\bx_{s+1}$, we have
\begin{equation}\label{eq: kwe3}
0 \in G(\bx_s,\by_{s+1},\bz_s) + \frac{2}{\tau}(\bx_{s+1} - \bx_s) + \partial I_X(\bx_{s+1}).
\end{equation}
We now set
\[
\bv = \nabla_\bx K(\bx_{s+1}, \by_{s+1},\bz_s) - G(\bx_s,\by_{s+1},\bz_s) - \frac{2}{\tau}(\bx_{s+1} - \bx_s) - \rho A^\top(A\bx_{s+1} - \bb) - \mu(\bx_{s+1} - \bz_s).
\]
Now, by using the definition of $K(\bx, \by, \bz)$ from \eqref{eq: k_def_main} and \eqref{eq: kwe3}, we obtain
\[
\bv \in \nabla f(\bx_{s+1}) + A^\top \by_{s+1} + \partial I_X(\bx_{s+1})
\]
We now derive the guarantees on the feasibility and the norm of $\bv$.

First, by triangle inequality, we have
\begin{align}
    \|A\bx_{s+1} - \bb\| &\leq \|A\bx^*(\by_{s+1}, \bz_s) - \bb\| + \|A\bx_{s+1} - A\bx_s \|+ \|A(\bx_{s} - \bx^*(\by_{s+1}, \bz_s))\|  \notag \\
    &\leq \|A\bx^*(\by_{s+1}, \bz_s) - \bb\|+\|A\|\|\bx_{s+1}-\bx_s\|+\frac{\|A\|}{\tau (\mu-L_f)}\|\bx_s-\bx_{s+1}\|+\frac{\|A\|}{\mu-L_f}\|\widehat{\nabla}f_s-\nabla f(\bx_s)\|  \notag \\
    &=O(T^{-1/3}),\label{eq: aoe4}
\end{align}
where in the second inequality, we use Lemma \ref{lem: x-x(y,z)}.

Then we have
\begin{align*}
    \|\bv\|&\leq \| \nabla_\bx K(\bx_{s+1},\by_{s+1},\bz_s)-\nabla_\bx K(\bx_{s},\by_{s+1},\bz_s)\|
    +\|\nabla_\bx K(\bx_{s},\by_{s+1},\bz_s)-G(\bx_{s},\by_{s+1},\bz_s)\|\\
    &\quad +\frac{2}{\tau}\|\bx_{s+1}-\bx_s\|+\rho \|A\| \|A\bx_{s+1}-\bb\|+\mu\|\bx_{s+1}-\bz_s\|\\
    &\leq \Big(L_K+\frac{2}{\tau}\Big)\|\bx_{s+1}-\bx_s\|+\|\nabla f(\bx_s)-\widehat{\nabla}f_s\|+\rho \|A\| \|A\bx_{s+1}-\bb\|+\mu(\|\bx_{s}-\bz_s\|+\|\bx_{s+1}-\bx_s\|)\\
    &=O(T^{-1/3}),
\end{align*}
where in first inequality, we introduce a term $\nabla_{\bx}K(\bx_s,\by_{s+1},\bz_s)$ and  then use triangle inequality. The second inequality used Lipschitzness of $K$, the definition of $G$, and the triangle inequality. The last step uses \eqref{eq: aoe3} and \eqref{eq: aoe4}.
\end{proof}

\begin{proof}[Proof of \Cref{rem: storm}]
    This is because for the proof for Section \ref{sec: rand_sec}, we only need to obtain a result similar to Lemma \ref{th: alm_storm_descent} when $\|\by\|<M_y$ (where the latter result will be shown in the same way as Proposition \ref{prop: y_bdd}).

    First, there is a difference in Lemma \ref{lemma: descent_alm_storm} that analyzes the progress of $K(\bx_t,\by_t,\bz_t)$. In particular, we now have
    \begin{align*}
        &\langle \nabla_{\bx} K(\bx_t, \by_{t+1}, \bz_t) - G(\bx_t, \by_{t+1}, \bz_t), \bx_{t+1} - \bx_t \rangle\\
        &=\langle \nabla f(\bx_t)+A^\top \by_{t+1}+A^\top(A\bx_t-\bb)-\widehat{\nabla}f_t -A_{\zeta_t^1}^\top \by_{t+1}+A_{\zeta_t^1}^\top(A_{\zeta_t^2}\bx_t-\bb_{\zeta_t^2}) , \bx_{t+1} - \bx_t    \rangle\\
        &\leq \frac{\tau}{2}\|(\nabla f(\bx_t)-\widehat{\nabla}f_t +(A^\top \by_{t+1}-A_{\zeta_t^1}^\top \by_{t+1})+(A^\top(A\bx_t-\bb)-A_{\zeta_t^1}^\top(A_{\zeta_t^2}\bx_t-\bb_{\zeta_t^2}))\|^2+\frac{1}{2\tau}\|\bx_{t+1}-\bx_t\|^2\\
        &\leq \frac{3\tau}{2}(\|\nabla f(\bx_t)-\widehat{\nabla}f_t \|^2+\|A^\top \by_{t+1}-A_{\zeta_t^1}^\top \by_{t+1}\|^2+\|A^\top(A\bx_t-\bb)-A_{\zeta_t^1}^\top(A_{\zeta_t^2}\bx_t-\bb_{\zeta_t^2})\|^2)+\frac{1}{2\tau}\|\bx_{t+1}-\bx_t\|^2
    \end{align*} 
The only difference is that we have more variance-type error terms. We use the same STORM technique to update the stochastic gradient $A_{\zeta_t^1}^\top \by_{t+1}$ and $A_{\zeta_t^1}^\top(A_{\zeta_t^2}\bx_t-\bb_{\zeta_t^2})$. Then, under \Cref{assumptions for stochasitc}, we will have a similar result that the first three terms could be bounded by similar recurrence relations to \ref{eq: variance_reduction2} and, as in \eqref{eq: snv4}, the order of $\tau$ will be $3$ since $\alpha \approx \tau^2$. 

Next, there is the second difference in Lemma \ref{lem: swo4} for $d(\by_t,\bz_t)$. In particular, we have
    \begin{align*} 
                &d(\by_{t+1}, \bz_{t+1})-d(\by_t, \bz_t)\\
                &\geq \langle \by_{t+1}-\by_{t}, A\bx^*(\by_{t+1}, \bz_t)-\bb \rangle +\frac{\mu}{2}\langle \bz_{t+1}-\bz_{t}, \bz_{t+1}+\bz_{t}-2\bx^*(\by_{t+1},\bz_{t+1}) \rangle\\
                &=\langle \eta(A_{\zeta_t}\bx_t-b_{\zeta_t})-\eta(A\bx_t-b), A\bx^*(\by_{t+1}, \bz_t)-\bb \rangle+\langle \eta(A\bx_t-b), A\bx^*(\by_{t+1}, \bz_t)-\bb \rangle \\
                &+\frac{\mu}{2}\langle \bz_{t+1}-\bz_{t}, \bz_{t+1}+\bz_{t}-2\bx^*(\by_{t+1},\bz_{t+1}) \rangle\\
                &\geq -\eta \|(A_{\zeta_t}\bx_t-b_{\zeta_t})-(A\bx_t-b)\|^2- \frac{\eta}{4}\|A\bx^*(\by_{t+1}, \bz_t)-\bb\|^2 \\
                &+\langle \eta(A\bx_t-b), A\bx^*(\by_{t+1}, \bz_t)-\bb \rangle
                +\frac{\mu}{2}\langle \bz_{t+1}-\bz_{t}, \bz_{t+1}+\bz_{t}-2\bx^*(\by_{t+1},\bz_{t+1}) \rangle
            \end{align*}
Note that the only difference between the above estimate and Lemma \ref{lem: swo4} are the first two (nonpositive) error terms on the right-hand side. We also use the STORM technique to update the stochastic gradient $A_{\zeta_t}\bx_t-b_{\zeta_t}$, then the first error term will be bounded with a similar recurrence relation to \eqref{eq: variance_reduction2}, and the error, as before, will be a constant term in the order of $\tau^3$. In addition the second error term will be canceled by the third term on the right-hand side of \eqref{eq: final descent_lemma_alm_storm}.

            Hence we will have an inequality for the change of $\mathbb{E}\bar{V}_t$ to $\mathbb{E} \bar{V}_{t+1}$, similar to \eqref{eq: final descent_lemma_alm_storm}. In particular, the main error term is of the order $\tau^3$. Then we will obtain the same $O(\varepsilon^{-3})$ complexity result by arguing the same way as in \Cref{th: storm_complexity}.
\end{proof}

\end{document}